
\documentclass{amsart}
\usepackage{amssymb}
\usepackage{graphicx}
\usepackage{amscd}
\usepackage{amsmath}
\usepackage{mathtools}
\DeclarePairedDelimiter\ceil{\lceil}{\rceil}

\voffset-2.5 cm \hoffset -1.5 cm \textwidth 15 cm \textheight 23
cm


\newtheorem{theorem}{Theorem}[section]
\theoremstyle{plain}
\newtheorem{lemma}[theorem]{Lemma}
\newtheorem{proposition}[theorem]{Proposition}

\newtheorem{corollary}[theorem]{Corollary}

\theoremstyle{remark}
\newtheorem{definition}[theorem]{Definition}

\newtheorem{examples}[theorem]{Examples}

\newtheorem{remark}[theorem]{Remark}

\numberwithin{equation}{section}

\headheight=12pt

\newcommand{\card}{card}

 \DeclareMathOperator{\sign}{sign}

\def\underset#1\to#2{\mathop{#2}\limits_{#1}{ }}
\def\overset#1\to#2{\mathop{#2}\limits^{#1}{ }}
\begin{document}

\title{\bf Local geometric properties  in quasi-normed Orlicz spaces}

\keywords{quasi-norm spaces,  Orlicz spaces, type, cotype, $p$-convexity, $q$-concavity, $B$-convexity}
\subjclass[2010]{46B20, 46E30, 47B38}

\author[Kami\'nska, {\.Z}yluk]{Anna Kami\'{n}ska and Mariusz {\.Z}yluk}
\address[Kami\'nska]{Department of Mathematical Sciences,
The University of Memphis, TN 38152-3240, U.S.A.}
\email{kaminska@memphis.edu}, 
\address[\.Zyluk]{Department of Mathematical Sciences,
The University of Memphis, TN 38152-3240, U.S.A.}\email{mzyluk@memphis.edu}

\date{}
\maketitle

\begin{abstract} Several local  geometric properties of Orlicz space $L_\phi$ are presented for an increasing Orlicz function $\phi$ which is  not necessarily convex, and thus $L_\phi$ does not need to be a  Banach space. In addition to monotonicity of $\phi$  it is supposed that $\phi(u^{1/p})$ is convex for some $p>0$ which is equivalent to that  its lower Matuszewska-Orlicz index $\alpha_\phi>0$. Such spaces are locally bounded and are equipped with natural quasi-norms.  Therefore many local geometric properties typical for Banach spaces can also be studied in those spaces.  The techniques however have to be different, since duality theory cannot be applied in this case. In this article we present complete criteria, in terms of growth conditions of $\phi$, for $L_\phi$ to have type $0<p\le2$, cotype $q\ge 2$, to be (order)  $p$-convex or $q$-concave, to have an upper $p$-estimate or a lower $q$-estimate, for $0<p,q<\infty$. We provide detailed proofs of most results, avoiding appealing to general not necessary theorems.     
\end{abstract}

We present here a number of results on local geometric properties of Orlicz spaces $L_\phi$, where $\phi:\mathbb{R}_+\to \mathbb{R}_+$, called an Orlicz function, is increasing, $\phi(0)=0$, continuous and $\lim_{u\to\infty} \phi(u) = \infty$. We will assume additionally that the space $L_\phi$ is locally bounded, which means that topology in $L_\phi$ is induced by a quasi-norm.  We shall consider such notions like type, cotype, (order) convexity, concavity and upper, and lower estimates.  In the case of Banach spaces (lattices)  these properties are related, and in fact these relations have been thoroughly studied in the past decades \cite{LT2}.
Although criteria for most of these properties have been known  in the case of normed Orlicz space  $L_\phi$ generated by a convex Orlicz function $\phi$,  they have not been studied thoroughly in the case of quasi-Banach Orlicz spaces. On the other hand in paper \cite{K1998} the author studied Musielak-Orlicz spaces generated by the Orlicz functions (not convex) with parameter, and proved in this generality several results on (order) convexity, concavity, lower and upper estimates, though neither on type nor cotype. Theoretically from those results we can get corollaries for Orlicz spaces $L_\phi$ without parameters. However, both conditions on Orlicz functions with parameter and proofs in Musielak-Orlicz spaces,  are far too complicated comparing with what can be done in just Orlicz spaces. In addition they can not be interpreted instantly in the case of Orlicz functions. 
Therefore it is desirable to provide direct and explicit statements and their proofs of these important properties in spaces $L_\phi$. 

The paper is divided into four sections. The first section contains preliminaries comprising of notations, definitions, some general theorems, and a number of basic results on Orlicz spaces $L_\phi$. The first theorem states that $\alpha_\phi>0$ is necessary and sufficient  for the Minkowski functional to be a quasi-norm in $L_\phi$. It justifies a general assumption made in the entire paper that $\alpha_\phi > 0$.  We also show the relationship among Matuszewska-Orlicz indices $\alpha_\phi$, $\beta_\phi$ and the growth conditions $\Delta_2$, $\Delta^p$ and $\Delta^{*p}$. We  give two regularization theorems on $\phi$, 
which allow us to assume that $\phi(u^{1/p})$ is convex or concave  whenever $\phi\in\Delta^{*p}$ or $\phi\in \Delta^p$ respectively. It is also proved that if $\phi$ does not satisfy condition $\Delta_2$, then $L_\phi$  contains an order isomorphically isometric copy of $\ell_\infty$.  All of these preliminary results are applied in the next sections. 

Section 2 consists of  two main theorems showing that  for any $0<p,q<\infty$, $q$-convexity (resp., $p$-convexity) of  $L_\phi$ is equivalent to lower $q$-estimate (resp. upper $p$-estimate) and this in turn is equivalent to $\phi\in \Delta^q$ (resp., $\phi\in \Delta^{*p}$). In the case of Banach lattices there exist duality among those properties \cite{LT2}. In that case it is possible to prove for instance the conditions on $q$-concavity, and by duality get the conditions on $p$-convexity. However in the case studied here when $L_\phi$ does not need to be a Banach space those theorems must be proved independently. 

In section 3 we present results on type and cotype of $L_\phi$. Again we are able to characterize completely these notions in terms of conditions $\Delta_2$, $\Delta^q$ and $\Delta^{*p}$.  If $1<p\le 2$ then $L_\phi$ has type $p$ if and only if $\phi\in\Delta_2$ and $\phi\in\Delta^{*p}$. For $0<p\le 1$ we show  that $L_\phi$ has type $p$ if and only if $L_\Phi$ is $p$-normable which in turn is equivalent to the fact that $\phi(u^{1/p})$ is convex. In particular for $p=1$ we obtain that $1$-normability, that is the existence of a norm equivalent to the quasi-norm in $L_\phi$, is equivalent to  type $1$. It is not a general fact because there exist spaces with type $1$ that are not normable \cite{KalKam, Kam2018}.

In the last section 4 there are theorems summarizing the conditions and their relationships for type, cotype, convexity, concavity and upper and lower estimates of $L_\phi$ for arbitrary Orlicz function with $\alpha_\phi >0$.  Moreover in the case when $\phi$ is convex there are stated  corollaries on conditions on $B$-convexity and uniform copies in $L_\phi$ of finite dimensional spaces $l_1^n$ and $l_\infty^n$, closely related to nontrivial type and finite cotype, respectively. 

The paper is ended with some examples. We show  that given $0<p\le 2 \le q<\infty$ there exist Orlicz spaces having type $p$ and cotype $q$. Moreover, for any $2\le q<\infty$ there are Orlicz spaces with the upper index $\beta_\phi = q$, which do not have cotype $q$, but they have cotype $q+\epsilon$ for any $\epsilon >0$.

All results on Orlicz spaces remain true in the case of three measure spaces,  a non-atomic measure space with infinite or finite measure, or a discrete measure space identified with the set of natural numbers with counting measure. For each of these measure space, growth conditions of $\phi$ and indices are specified as follows. The growth conditions  $\Delta_2$, $\Delta^q$, $\Delta^{*p}$ and indices $\alpha_\phi$, $\beta_\phi$ defined  for all arguments, large arguments, and small arguments are corresponding to  non-atomic infinite measure, non-atomic finite measure, and discrete counting measure, respectively. We will not repeat this convention while stating the results.

\section{Preliminaries}

Let the symbols $\Bbb R$, $\Bbb R_+$ and $\Bbb N$ stand  for reals, non-negative reals and natural numbers. 
Given a vector space $X$ the functional $x \mapsto
\|x\|$ is called a {\it quasi-norm} if for any $x\in
X$, 
\begin{itemize}
\item[(1)] $\|x\| = 0$
if and only if $x=0$,
\item[(2)] $\|ax\| = |a| \; \|x\|$, $a \in \mathbb{R}$, 
\item[(3)]  there exists $C > 0$ such that for all $x_1, x_2 \in X$,
\[
\|x_1 + x_2 \| \leq C (\|x_1\| + \|x_2\|).
\]
\end{itemize}
  We will
say that $(X, \|\cdot\|)$ is a {\it quasi-Banach
space} if it is complete \cite{KPR}.  Notice that a  quasi-Banach
space is locally bounded \cite{KPR}.  For $0 < p \leq 1$ the
functional $x \mapsto ||x||$ is called a $p$-{\it norm}
if it satisfies the previous conditions (1) and (2) and
condition 
\begin{itemize}
\item[(3')]  for any $x_1, x_2 \in X$, 
\[
\|x_1 +x_2\| \leq (\|x_1\|^p + \|x_2\|^p)^{1/p}.
\]
\end{itemize}
A quasi-Banach space equipped with a $p$-norm is called a $p$-Banach space. 

A quasi-Banach space $(X, \|\cdot\|)$  is called  {\it $p$-normable}   for some $0<p\le 1$ if there exists $C>0$ such that
\[
\|x_1+ \dots + x_n\| \le C(\|x_1\|^p + \dots +\|x_n\|^p)^{1/p}
\]
for all $x_1,\dots, x_n \in X$, $n\in \mathbb{N}$. 
If $X$ is $1$-normable then we say that $X$ is {\it normable}.  For any $p$-normable space $(X,\|\cdot\|)$ there exists a $p$-norm equivalent to $\|\cdot\|$.  In fact
\[
\|x\|_p = \inf\left\{\left(\sum_{i=1}^n \|x_i\|^p \right)^{1/p}:\ x=\sum_{i=1}^n x_i, \, n\in \mathbb{N}\right\}
\]
is a $p$-norm and  $\|x\|_p \le  \|x\| \le C \|x\|_p$ (see \cite{KPR}).

\begin{theorem} {\rm (Aoki-Rolewicz Theorem \cite{KPR, R2})} \label{th:aoki}

 For any quasi-Banach space $(X,\|\cdot\|)$  there exist  $0<p \leq 1$ and a
 $p$-norm $\|\cdot\|_0$ which is equivalent to $\|\cdot\|$. 
\end{theorem}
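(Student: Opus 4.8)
The plan is to reduce the theorem to the notion of $p$-normability recorded just before the statement. If I can produce some $0<p\le 1$ and a constant $K$ with
\[
\Big\|\sum_{i=1}^n x_i\Big\| \le K\Big(\sum_{i=1}^n \|x_i\|^p\Big)^{1/p}
\]
for every finite family $x_1,\dots,x_n\in X$, then $X$ is $p$-normable, and the explicit Minkowski-type functional $\|\cdot\|_p$ displayed above is a $p$-norm with $\|x\|_p\le\|x\|\le K\|x\|_p$, i.e.\ an equivalent $p$-norm. Thus the whole content is to manufacture such a $p$ and $K$ out of the single quasi-triangle constant $C\ge 1$.

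First I would calibrate the exponent. Instead of the critical value given by $2^{1/p}=2C$, I would take $p\in(0,1]$ strictly smaller, small enough that $2C\,(3/4)^{1/p}\le 1$ (equivalently $p\le \log(4/3)/\log(2C)$); this builds in the slack the argument needs. With this $p$ fixed I set $K=2C$ and prove the displayed bound, after normalizing so that $\sum_{i=1}^n\|x_i\|^p\le 1$, in the equivalent form $\big\|\sum_i x_i\big\|\le 2C$, by strong induction on $n$.

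For the inductive step I would order the vectors by decreasing quasi-norm and split into two cases according to whether one vector dominates the total mass. If $\|x_1\|^p>\tfrac12$, I peel it off: the remaining vectors carry $p$-mass $<\tfrac12$, the inductive hypothesis controls their sum, and one application of the quasi-triangle inequality together with $C\,2^{-1/p}\le\tfrac12$ closes this case. If instead every $\|x_i\|^p\le\tfrac12$, I partition the indices greedily into two blocks of comparable mass; a short estimate shows each block then carries $p$-mass at most $\tfrac34$, so the inductive hypothesis bounds each block sum, and combining the two blocks by the quasi-triangle inequality uses precisely the calibration $2C\,(3/4)^{1/p}\le 1$. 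Both cases return the bound $2C$, completing the induction, after which the quoted $p$-normability fact supplies the equivalent $p$-norm $\|\cdot\|_0=\|\cdot\|_p$.

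I expect the decisive point to be the handling of a dominant vector. It is exactly the impossibility of always cutting the mass into two equal halves—a single large $\|x_i\|^p$ can obstruct any balanced partition—that forces both the peeling-off case and, more importantly, the choice of $p$ strictly below the critical threshold $1/\log_2(2C)$. Forcing the two cases to return the same constant is what pins down the admissible range of $p$ together with the value $K=2C$; everything else is routine bookkeeping.
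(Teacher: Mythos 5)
Your argument is correct, but note that the paper itself does not prove this theorem at all: it is quoted as a classical result with references to Kalton--Peck--Roberts and Rolewicz, and the only ingredient the paper records is the fact (which you correctly invoke) that a $p$-normable space admits the equivalent $p$-norm $\|x\|_p=\inf\{(\sum_i\|x_i\|^p)^{1/p}: x=\sum_i x_i\}$. So the real comparison is with the classical proofs in the cited sources. Those work at the critical exponent $2^{1/p}=2C$ and establish $p$-normability by a dyadic ``carrying'' argument: normalize $\sum_i\|x_i\|^p=1$, group the vectors into levels $2^{-k-1}<\|x_i\|^p\le 2^{-k}$ (so level $k$ has at most $2^{k+1}$ members), and merge pairs within a level, each merge costing a factor $2C=2^{1/p}$ and hence moving the pair up one level; bookkeeping of the counts yields a universal bound. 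Your proof is genuinely different and, in my view, more self-contained: you deliberately retreat to a suboptimal exponent $p\le\log(4/3)/\log(2C)$ to buy slack, and then run a strong induction with two cases --- peel off a dominant vector of $p$-mass $>\tfrac12$ (here the rescaling by $2^{1/p}$ before applying the inductive hypothesis, together with $C2^{-1/p}\le\tfrac12$, is exactly what keeps the constant at $2C$), or greedily bisect into two blocks, each of $p$-mass at most $\tfrac34$ (the greedy bound follows since the heavier block, at the moment it received its last item of mass $\le\tfrac12$, was the lighter one). Both cases return the same constant $2C$, the induction closes, and $p$-normability plus the quoted functional finishes the proof. What the classical route buys is the sharp exponent $p=\log 2/\log(2C)$; what yours buys is an elementary, fully verifiable induction that never needs the dyadic level structure. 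Since the theorem only asserts the existence of some $0<p\le 1$, the loss of sharpness is immaterial here.
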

 
 \begin{remark}\label{rem:aoki}
 
If $(X, \|\cdot\|)$ is a quasi-normed lattice, then it is easy to modify the $p$-norm  above to obtain an equivalent lattice $p$-norm. Indeed, setting $\|x\|_1 = \inf\{\|\,|y|\,\|_0 : |x| \le |y|\}$, the new functional $\|\cdot\|_1$ is a $p$-norm, which preserves the order and is equivalent to $\|\cdot\|$.
 
 \end{remark}

Let $r_n : [0,1]\rightarrow
\Bbb R, n\in\Bbb N$, be {\it Rademacher functions}, that is $r_n(t) =
\sign \,(\sin 2^n\pi t)$, $t\in [0,1]$. A quasi-Banach space $X$ has type $0 <
p\le 2$ if there is a constant $K>0$  such that, for any choice of
finitely many vectors $x_1,\dots ,x_n$ from $X$,
$$
\int_0^1 \left\| \sum_{k=1}^n r_k(t)x_k \right\|dt \le K \left(\sum_{k=1}^n
\|x_k\|^p\right)^{1/p},
$$
and it has cotype $q\ge 2$ if there is a constant $K>0$
such that for any finitely many elements $x_1,\dots ,x_n$ from $X$,
\[
\left(\sum_{k=1}^n \|x_k\|^q\right)^{1/q} \le K \int_0^1\left\|\sum_{k=1}^n r_k(t)
x_k\right\| dt.
\]
Clearly if a quasi-Banach space has type $0<p\le 2$, respectively cotype $q\ge 2$,  then it has type $r$ for any $0<r<p$, respectively cotype $r>q$. For these notions we refer to \cite{LT2} for Banach spaces and to \cite{Kal1981} for quasi-Banach spaces. 

The following  result by Kalton gives a connection between type $1<p\le 2$  and $1$-normability of quasi-Banach spaces.

 \begin{theorem}\label{th:kal1}
 \cite[Theorem 4.1]{Kal1981} Let $1<p\le 2$. If a quasi-Banach space has type $p$ then $X$  is normable. 
\end{theorem}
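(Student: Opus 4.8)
The plan is to show that type $p$ with $p>1$ forces $X$ to be $1$-normable; by the discussion preceding \thmref{th:aoki}, $1$-normability is exactly the existence of an equivalent norm, so this suffices. To quantify normability, I would set
\[
C_n = \sup\set{\frac{\norm{\sum_{i=1}^n x_i}}{\sum_{i=1}^n \norm{x_i}} : x_1,\dots,x_n\in X\setminus\set{0}}.
\]
Then $C_n\ge 1$ for every $n$, and grouping $mn$ vectors into $m$ blocks of $n$ and rescaling gives the submultiplicativity $C_{mn}\le C_m C_n$. Consequently the convexity exponent $\gamma=\lim_n (\log C_n)/\log n=\inf_n(\log C_n)/\log n$ is well defined, and $X$ is $1$-normable precisely when $\sup_n C_n<\infty$. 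The whole problem is therefore to bound $C_n$ uniformly in $n$.

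By \thmref{th:aoki} we may assume $\norm{\cdot}$ is an $r$-norm for some $0<r\le 1$, which yields the crude a priori estimate $\norm{\sum_{i=1}^n x_i}\le(\sum_{i=1}^n\norm{x_i}^r)^{1/r}$ and hence $C_n\le n^{1/r-1}$, that is $\gamma\le 1/r-1$. First I would record that type controls Rademacher averages: for unit vectors $x_1,\dots,x_n$,
\[
\int_0^1\norm{\sum_{k=1}^n r_k(t)x_k}\,dt\le K n^{1/p},
\]
and, by the classical Kahane inequality, this quantity may be compared up to a universal constant with the corresponding $L_2$-average, which is the form most convenient for computation.

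The engine of the proof is a self-improvement of the exponent $\gamma$. The aim is, starting from an estimate $C_n\le A\,n^\gamma$ with $\gamma>0$, to apply the type inequality to a suitably blocked and symmetrized array of vectors and to extract a new estimate $C_n\le A'\,n^{\gamma'}$ with $\gamma'$ strictly smaller than $\gamma$, the gain coming from the saving $1/p<1$ in the type exponent. Iterating this step is meant to drive $\gamma$ to $0$; a final step would then combine $\gamma=0$ with the submultiplicativity of $C_n$ and one further application of type to upgrade sub-polynomial growth to the uniform bound $\sup_n C_n<\infty$. Once $X$ is known to be $1$-normable, the construction recalled before \thmref{th:aoki} produces an equivalent norm and the theorem follows.

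I expect the hard part to be exactly the bridge from Rademacher averages to the unsigned sum $\sum_{i} x_i$. This is where the quasi-Banach setting departs sharply from the Banach one, in which the conclusion is vacuous: one has neither the triangle inequality nor the vector-valued Jensen inequality $\norm{\int f}\le\int\norm{f}$, so type, which only controls the average over signs, cannot be transferred to the all-plus sum for free. A naive sign split $\sum_{i} x_i=\sum_{i}\eps_i x_i+2\sum_{i:\eps_i=-1}x_i$ recovers the sum but loses a factor equal to the quasi-norm constant at every step, and iterating it merely reproduces the crude Aoki--Rolewicz bound. Overcoming this requires choosing the signs and the grouping globally, so that the $1/p<1$ saving accumulates across scales faster than the quasi-norm constant degrades; arranging this book-keeping is the technical core of Kalton's argument.
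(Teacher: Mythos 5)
First, a remark on the comparison itself: the paper does not prove this statement at all --- it is imported verbatim from Kalton \cite[Theorem 4.1]{Kal1981} and used as a black box (e.g.\ in Proposition~\ref{prop:type2} and Theorem~\ref{th:typecriterion}). So your attempt can only be judged on its own merits, and on those merits it is not a proof but a plan whose decisive step is missing. The scaffolding you set up is sensible: the constants $C_n$, their submultiplicativity $C_{mn}\le C_mC_n$, the resulting Fekete-type exponent $\gamma$, the a priori bound $C_n\le n^{1/r-1}$ from \thmref{th:aoki}, and the observation that $1$-normability is the same as $\sup_n C_n<\infty$. But the engine --- ``apply the type inequality to a suitably blocked and symmetrized array of vectors and extract $C_n\le A'n^{\gamma'}$ with $\gamma'<\gamma$'' --- is announced, never executed. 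No blocking scheme is specified, no inequality is derived, and no formula for $\gamma'$ in terms of $\gamma$ and $1/p$ is produced. Your own closing paragraph concedes the point: passing from the Rademacher average $\int_0^1\|\sum_k r_k(t)x_k\|\,dt$ to the unsigned sum $\|\sum_k x_k\|$ is exactly where the quasi-Banach setting blocks every standard device (no triangle inequality, no Jensen inequality $\|\int f\|\le\int\|f\|$, and the naive sign-split only reproduces the Aoki--Rolewicz bound). Identifying the obstruction is not the same as overcoming it; the content of Kalton's theorem is precisely the mechanism that you leave as a black box.

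Two further gaps would remain even if the self-improvement step were supplied. First, driving $\gamma$ to $0$ yields only subpolynomial growth of $C_n$, which is strictly weaker than $\sup_n C_n<\infty$ (consider $C_n\sim\log n$); the ``final step'' that upgrades this is again only promised, and it is not routine --- submultiplicativity alone is compatible with unbounded $C_n$ when $\gamma=0$. Second, your appeal to ``the classical Kahane inequality'' is not available for free: the standard proof of Kahane's inequality uses the triangle inequality, and its extension to quasi-Banach spaces is itself a nontrivial theorem of Kalton, so invoking it here either needs a citation to that quasi-normed version or risks circularity with the very techniques being developed. In summary: the outline is a reasonable reconstruction of the shape of Kalton's argument, but the proof has a genuine hole at its center, and the correct course of action in the context of this paper is what the authors themselves do --- cite \cite[Theorem 4.1]{Kal1981} rather than reprove it.
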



 A quasi-Banach space
$(X,||\cdot||)$ which in addition is a vector lattice
and $||x|| \leq ||y||$ whenever $|x|\leq |y|$ is called a
{\it quasi-Banach lattice}.  
A quasi-Banach lattice $X=(X,\|\cdot \|)$ is said to be (order) $p$-\textit{convex}, $0
< p < \infty$, respectively (order) $q$-\textit{concave}, $0 < q< \infty,$ if there
are positive constants $C_p$, respectively $D_q$, such that
\[
\left\|\left(\sum_{i=1}^n |x_i|^p \right)^{\frac{1}{p}}\right\| \leq C_p
\left(\sum_{i=1}^n \|x_i\|^p \right)^{\frac{1}{p}}
\]
respectively,
\[
\left(\sum_{i=1}^n \|x_i\|^q \right)^{\frac{1}{q}} \leq D_q
\left\|\left(\sum_{i=1}^n |x_i|^q \right)^{\frac{1}{q}}\right\|
\]
for every choice of vectors $x_1, \ldots, x_n \in X.$ When referring to these notions we skip the word "order", and simply say $p$-convex or $q$-concave. We also say that $X$
satisfies an \textit{upper $p$-estimate}, $0 < p < \infty$, respectively a
\textit{lower $q$-estimate}, $0<q<\infty$, if the inequalities in the  definition of $p$-convexity, respectively $q$-concavity, hold true only for any choice
of disjointly supported elements $x_1, \ldots, x_n$ in $X$
\cite{Kal1981, LT2}. It is well known that if a quasi-Banach space is $p$-convex or has an upper $p$-convexity, then for any $0<r\le p$, $X$ is $r$-convex or has an upper $r$-convexity respectively. For $q$-concavity or lower $q$-estimate the similar property holds in reverse direction \cite{LT2}.  

The next two results on relationships among type, cotype, convexity and concavity hold true in Banach lattices. 

\begin{theorem}\label{th:LT1f18} \cite[Theorem 1.f.18]{LT2}
A Banach lattice $X$ has type $p>1$ if and only if its dual space $X^*$ has cotype $p'$, $1/p + 1/p' =1$, and has a lower $q$-estimate for some $q<\infty$.
\end{theorem}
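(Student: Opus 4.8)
The plan is to prove the two implications separately. The engine on both sides is a pair of elementary disjointness identities valid in any Banach lattice: if $x_1,\dots,x_n$ are disjointly supported then $\abs{\sum_k r_k(t)x_k}=\sum_k\abs{x_k}$ for every $t$, so that $\int_0^1\norm{\sum_k r_k(t)x_k}\,dt=\norm{\sum_k\abs{x_k}}$, while simultaneously $(\sum_k\abs{x_k}^p)^{1/p}=\sum_k\abs{x_k}$. Substituting disjoint vectors into the definition of type $p$ thus shows that type $p$ forces an upper $p$-estimate, and substituting them into the definition of cotype $q$ shows that cotype $q$ forces a lower $q$-estimate. Besides these I shall use three standard facts: the general (non-lattice) duality that type $p$ of $X$ implies cotype $p'$ of $X^*$; the lattice duality between estimates, namely that $X$ satisfies an upper $p$-estimate if and only if $X^*$ satisfies a lower $p'$-estimate; and the Maurey--Pisier principle that a Banach space of nontrivial type has finite cotype. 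The lower $q$-estimate in the statement serves only to record that $X$ itself has finite cotype.

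For the forward implication, assume $X$ has type $p$ with $1<p\le 2$. The general type--cotype duality immediately gives that $X^*$ has cotype $p'$, where $1/p+1/p'=1$ and $p'<\infty$. To obtain the lower $q$-estimate, note that type $p>1$ is a nontrivial type, so by the Maurey--Pisier principle $X$ has finite cotype, say cotype $q$ for some $q<\infty$; the disjointness identity above then turns this cotype into a lower $q$-estimate for $X$. Thus both conditions on the right-hand side hold.

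For the converse, assume that $X^*$ has cotype $p'$ and that $X$ satisfies a lower $q$-estimate for some $q<\infty$; the latter says exactly that $X$ has finite cotype. Applying the disjointness identity inside the lattice $X^*$, the cotype $p'$ of $X^*$ yields a lower $p'$-estimate for $X^*$. By the estimate duality this is equivalent to an upper $p$-estimate for $X$ (the upper estimate is produced on $X^{**}$ and passes down to the sublattice $X$). We have now arranged that $X$ has an upper $p$-estimate and finite cotype, and it remains to upgrade this to type $p$.

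The heart of the matter, and the step I expect to be the main obstacle, is precisely this last upgrade: a Banach lattice with finite cotype that satisfies an upper $p$-estimate has type $p$. This is the genuine converse of the disjointness observation, and it is false without the cotype hypothesis --- for instance $c_0$ satisfies an upper $p$-estimate for every $p$ yet has only trivial type --- so the finite cotype furnished by the lower $q$-estimate is indispensable. Its proof is the deep part of the lattice theory (Maurey): one must pass from an inequality tested only on disjoint vectors to one tested on arbitrary vectors with independent random signs, which uses the concavity coming from finite cotype together with averaging arguments (Gaussian/Rademacher comparison and a Nikishin--Maurey type factorization), and the exponent must be controlled sharply so that the resulting type is exactly $p$ rather than merely every $r<p$; the borderline case $p=p'=2$, where $2$-concavity must be extracted, is the most delicate. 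Granting this theorem, the two implications above close the equivalence.
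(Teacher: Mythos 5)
The paper offers no proof of this statement at all---it is quoted from \cite[Theorem 1.f.18]{LT2}---so your proposal has to stand on its own; there is no internal argument to compare it with. Two things you did deserve endorsement. First, as printed, the clause ``has a lower $q$-estimate'' grammatically modifies $X^*$, and under that reading the theorem is false: for $X=c_0$ the dual $\ell_1$ has cotype $2$ and satisfies a lower $q$-estimate for every $q\ge 1$ (disjoint vectors in $\ell_1$ add norms), yet $c_0$ has only trivial type. Your reading---that the lower $q$-estimate is a hypothesis on $X$ itself, recording that $X$ has finite cotype---is the only tenable one, and you even supply the $c_0$-type counterexample showing why some such hypothesis is indispensable. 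Second, your forward implication is correct and complete: cotype $p'$ of $X^*$ is the general type--cotype duality, and the lower $q$-estimate on $X$ follows from Maurey--Pisier (or, staying inside the paper, from Theorem \ref{th:1f13}) together with your disjointification identity.

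The genuine gap is in the converse at the endpoint $p=2$, which the statement covers (``type $p>1$'' means $1<p\le 2$, and the paper applies the theorem up to $p=2$). Your plan replaces ``$X^*$ has cotype $p'$'' by the strictly weaker ``$X^*$ satisfies a lower $p'$-estimate'', dualizes to an upper $p$-estimate on $X$, and then invokes as a black box: \emph{upper $p$-estimate plus finite cotype implies type $p$}. For $1<p<2$ this is indeed Maurey's theorem and the reduction is fine; but for $p=2$ the black-boxed statement is false, so the proof cannot close there. Indeed, by the $p$-convex half of Theorem \ref{th:Khinchine} (applicable with $p=1$, since every Banach lattice is $1$-convex), type $2$ forces $2$-convexity in any Banach lattice; yet an upper $2$-estimate plus finite cotype yields only $r$-convexity for $r<2$. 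Concretely, the Lorentz space $X=L^{2,q}$ with $2<q<\infty$ satisfies an upper $2$-estimate and a lower $q$-estimate, but is not $2$-convex (its $2$-concavification is the non-normable space $L^{1,q/2}$), hence fails type $2$. Its dual $L^{2,q'}$, $q'<2$, does satisfy a lower $2$-estimate, so this example meets every hypothesis your reduced argument actually uses; what it fails is cotype $2$ of the dual---precisely the information you discarded in the first step. A correct treatment of $p=2$ must keep the full cotype hypothesis: from the upper $2$-estimate and finite cotype one gets type $r$ for some $r>1$, hence $K$-convexity of $X$ by Pisier's theorem, and for $K$-convex spaces type--cotype duality is two-sided, so cotype $2$ of $X^*$ does return type $2$ of $X$. (Relatedly, your parenthetical that in the borderline case ``$2$-concavity must be extracted'' is off: what type $2$ requires is $2$-convexity, and no argument can extract it from an upper $2$-estimate and finite cotype alone.)
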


\begin{theorem}\cite[Corollary 1.f.13]
{LT2}\label{th:1f13} A Banach lattice which has type $p>1$ is $q$-concave for some $q<\infty$.
\end{theorem}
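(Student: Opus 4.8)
The plan is to prove the statement by reducing $q$-concavity to a containment condition for the lattice and then using the type inequality to rule out that containment. The essential external input is Maurey's lattice characterization of concavity (as recorded in \cite{LT2}): a Banach lattice is $q$-concave for some $q<\infty$ if and only if it does not contain the spaces $\ell_\infty^n$ uniformly. Granting this, it suffices to show that a Banach lattice $X$ of type $p>1$ cannot contain $\ell_\infty^n$'s uniformly.

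First I would record the elementary isometric embedding $\ell_1^n\hookrightarrow\ell_\infty^{2^n}$: sending the $i$-th unit vector of $\ell_1^n$ to the vector whose coordinate indexed by a sign pattern $\varepsilon\in\{-1,1\}^n$ equals $\varepsilon_i$, one obtains $\max_{\varepsilon}\abs{\sum_{i=1}^n a_i\varepsilon_i}=\sum_{i=1}^n\abs{a_i}$, so the map is an isometry onto its range. Consequently, if $X$ contained $\ell_\infty^N$'s uniformly with a common constant $C$, then taking $N=2^n$ would produce in $X$ copies of $\ell_1^n$, for every $n$, with constant comparable to $C$; that is, $X$ would contain $\ell_1^n$'s uniformly.

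The next step is that uniform containment of $\ell_1^n$'s is incompatible with type $p>1$, and this is a direct computation. Suppose $u_1,\dots,u_n\in X$ span a copy of $\ell_1^n$ normalized so that $\norm{u_k}\le 1$ and $\norm{\sum_k a_k u_k}\ge\frac1C\sum_k\abs{a_k}$. Applying the definition of type $p$ to these vectors yields
\[
\frac{n}{C}\le\int_0^1\norm{\sum_{k=1}^n r_k(t)u_k}\,dt\le K\Bigl(\sum_{k=1}^n\norm{u_k}^p\Bigr)^{1/p}\le K\,n^{1/p},
\]
where the left inequality uses that every Rademacher sign sum has $\ell_1^n$-norm equal to $n$. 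This forces $n^{1-1/p}\le KC$ for all $n$, which is absurd for $p>1$. Hence $X$ does not contain $\ell_1^n$'s uniformly, and therefore, by the previous paragraph, it does not contain $\ell_\infty^n$'s uniformly either; the cited characterization then gives $q$-concavity for some $q<\infty$.

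The main obstacle is precisely the lattice characterization invoked at the outset: its hard implication, that absence of uniform $\ell_\infty^n$'s forces finite concavity, is Maurey's theorem and is genuinely lattice-theoretic, so I would cite it rather than reprove it. It is worth stressing that a purely dual argument through \thmref{th:LT1f18} does not suffice here: type $p$ of $X$ controls the cotype and the lower estimate of $X^*$, which dualize back to \emph{convexity} and an upper estimate of $X$, whereas $q$-concavity of $X$ corresponds to type of $X^*$, a quantity not governed by the type of $X$. This asymmetry is exactly what makes the deep concavity theorem unavoidable.
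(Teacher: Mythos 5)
Your proposal is correct, but note that the paper itself offers no proof of this statement: it is imported verbatim from Lindenstrauss--Tzafriri as \cite[Corollary 1.f.13]{LT2} and used as a black box (in Proposition \ref{prop:type2} and in Theorem \ref{th:typecriterion}). What you have written is essentially the argument by which that corollary is obtained in \cite{LT2} itself: the isometric embedding $\ell_1^n\hookrightarrow\ell_\infty^{2^n}$ via sign patterns, the direct computation that type $p>1$ forbids uniform copies of $\ell_1^n$ (your display is right: $n/C\le K n^{1/p}$ forces $n^{1-1/p}\le KC$, impossible for large $n$ when $p>1$), and then the genuinely deep step --- that a Banach lattice not containing the $\ell_\infty^n$ uniformly must be $q$-concave for some $q<\infty$ --- which is Maurey's theorem, \cite[Theorem 1.f.12]{LT2}, and which you rightly cite rather than attempt to reprove. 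So relative to the paper you are not taking a different route; you are correctly unpacking the citation, and every elementary step in the unpacking checks out. Your closing remark is also well taken and worth keeping: a purely dual argument through Theorem \ref{th:LT1f18} cannot replace the lattice theorem, since type of $X$ dualizes to cotype and a lower estimate for $X^*$, which pull back to convexity-type information about $X$, not to the concavity asserted here.
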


The next result was proved in \cite{LT2}, Theorem 1.d.6  for Banach lattices only, while it is shown here  for quasi-normed lattices.
\begin{theorem}\label{th:Khinchine}
Let $(X,\|\cdot\|)$ be a quasi-Banach lattice. If $X$   is $q$-concave for some $q<\infty$, then there exists $C>0$ such that for every $x_1,\dots,x_n \in X$, $n\in \mathbb{N}$, we have
\[
\int_0^1\left\|\sum_{i=1}^n r_i(t) x_i\right\|dt
\le C \left\|\left(\sum_{i=1}^n |x_i|^2\right)^{1/2}\right\|. 
\]
If $X$   is $p$-convex for some $p>0$, then there exists $C>0$ such that for every $x_1,\dots,x_n \in X$, $n\in \mathbb{N}$, we have
\[
\int_0^1\left\|\sum_{i=1}^n r_i(t) x_i\right\|dt
\ge C \left\|\left(\sum_{i=1}^n |x_i|^2\right)^{1/2}\right\|. 
\]
\end{theorem}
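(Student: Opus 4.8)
The plan is to derive both inequalities from the pointwise (scalar) Khinchine inequality, transported into the lattice by the homogeneous functional calculus, together with the defining inequalities of concavity and convexity; notably, no use of the triangle inequality or of duality is needed. The starting observation is that each $r_i$ takes only the values $\pm1$ and the $2^n$ sign patterns $\epsilon=(\epsilon_1,\dots,\epsilon_n)\in\{-1,1\}^n$ are equidistributed on $[0,1]$, so any integral $\int_0^1 F(r_1(t),\dots,r_n(t))\,dt$ collapses to the finite average $2^{-n}\sum_{\epsilon}F(\epsilon)$. In particular, for every $0<s<\infty$ the element $\big(\int_0^1|\sum_i r_i(t)x_i|^s\,dt\big)^{1/s}=\big(2^{-n}\sum_\epsilon|\sum_i\epsilon_i x_i|^s\big)^{1/s}$ is a genuine, finitely generated element of $X$ obtained from $x_1,\dots,x_n$ by the homogeneous functional calculus. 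Since a quasi-normed lattice is Archimedean (immediate from monotonicity and homogeneity of the quasi-norm), finitely many elements generate a sublattice representable as a space of functions, so any inequality between positively homogeneous continuous functions of scalars that holds pointwise also holds as an order inequality in $X$. Applying this to scalar Khinchine yields, for every $0<s<\infty$, the order bounds
\begin{equation}\label{eq:khin}
A_s\Big(\sum_{i=1}^n|x_i|^2\Big)^{1/2}\le\Big(\int_0^1\Big|\sum_{i=1}^n r_i(t)x_i\Big|^s\,dt\Big)^{1/s}\le B_s\Big(\sum_{i=1}^n|x_i|^2\Big)^{1/2},
\end{equation}
which, after applying the monotone lattice quasi-norm, will be the engine of both parts.

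For the first (concave) estimate I would first use that $q$-concavity for some $q$ forces $q$-concavity for every larger exponent, so I may assume $q\ge1$. Writing $y_\epsilon=\sum_i\epsilon_i x_i$, the quantity to bound is $\int_0^1\|\sum_i r_i x_i\|\,dt=2^{-n}\sum_\epsilon\|y_\epsilon\|$. Jensen's inequality on the probability space $[0,1]$ (valid because $q\ge1$) gives $2^{-n}\sum_\epsilon\|y_\epsilon\|\le(2^{-n}\sum_\epsilon\|y_\epsilon\|^q)^{1/q}$, and $q$-concavity applied to the $2^n$ vectors $y_\epsilon$, after absorbing the normalizing factor $2^{-n}$ by homogeneity of the quasi-norm, yields $(2^{-n}\sum_\epsilon\|y_\epsilon\|^q)^{1/q}\le D_q\|(2^{-n}\sum_\epsilon|y_\epsilon|^q)^{1/q}\|=D_q\|(\int_0^1|\sum_i r_i x_i|^q\,dt)^{1/q}\|$. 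Finishing with the right-hand order bound of \eqref{eq:khin} (taking $s=q$) and monotonicity of the norm produces the claimed inequality with $C=D_qB_q$.

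For the second (convex) estimate the reductions run the other way: $p$-convexity for some $p$ forces $p$-convexity for every smaller exponent, so I may assume $0<p\le1$. Applying the definition of $p$-convexity to the vectors $y_\epsilon$ and again letting the $2^{-n}$ weights cancel by homogeneity gives $\|(\int_0^1|\sum_i r_i x_i|^p\,dt)^{1/p}\|\le C_p(\int_0^1\|\sum_i r_i x_i\|^p\,dt)^{1/p}$. The left-hand order bound of \eqref{eq:khin} (with $s=p$) bounds the left side below by $A_p\|(\sum_i|x_i|^2)^{1/2}\|$, while the restriction $p\le1$ makes $L^p\subset L^1$ on $[0,1]$, so $(\int_0^1\|\sum_i r_i x_i\|^p\,dt)^{1/p}\le\int_0^1\|\sum_i r_i x_i\|\,dt$. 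Chaining these three inequalities gives the lower bound with $C=A_p/C_p$.

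I expect the only genuinely delicate point to be the justification of the lattice Khinchine inequality \eqref{eq:khin}: one must be sure the homogeneous functional calculus, and hence the transfer of pointwise scalar inequalities to order inequalities, is legitimate in a merely quasi-normed lattice. This is where the Archimedean property and the representation of finitely generated sublattices enter. Everything else reduces to Jensen's inequality and homogeneity bookkeeping in the definitions of $p$-convexity and $q$-concavity; and since, as noted, no triangle inequality or duality is invoked, the argument survives the passage from Banach lattices (the setting of \cite[Theorem 1.d.6]{LT2}) to quasi-Banach lattices.
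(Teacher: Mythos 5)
Your proof is correct and follows essentially the same route as the paper's: reduce to $0<p\le 1\le q$, rewrite the Rademacher integral as a finite average over the $2^n$ sign patterns, apply Jensen plus $q$-concavity (resp.\ $p$-convexity plus $\left(\int_0^1\|\cdot\|^p\,dt\right)^{1/p}\le\int_0^1\|\cdot\|\,dt$ for $p\le1$), and finish with the scalar Khinchine inequality transferred into the lattice, yielding the same constants $D_qB_q$ and $A_p/C_p$. The only difference is that you explicitly flag and justify the Krivine-type homogeneous functional calculus needed to state Khinchine's inequality as an order inequality in an abstract quasi-Banach lattice, a point the paper uses silently.
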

\begin{proof}
Recall first  well known Khintchine's inequality \cite{DJT, LT1}. For every $1\le r < \infty$ there exist positive constants $A_r$ and $B_r$ such that
\[
A_r\left(\sum_{i=1}^n |a_i|^2 \right)^{1/2} \le \left(\int_0^1\left|\sum_{i=1}^n a_i r_i(t)\right|^rdt\right)^{1/r} \le B_r\left(\sum_{i=1}^n |a_i|^2\right)^{1/2}
\]
for every choice of scalars $a_1,\dots, a_n$. We can assume that $0<p<1<q$.
Let $x_1,\dots,x_n \in X$. Then by $q$-concavity of $X$ and Khintchine's inequality,
\begin{align*}
\int_0^1\left\|\sum_{i=1}^n r_i(t) x_i\right\|dt &\le 
\left(\int_0^1\left\|\sum_{i=1}^n r_i(t) x_i \right\|^q dt \right)^{1/q}
=\left(\frac{1}{2^n} \sum_{\theta_j =\pm 1} \left\|\sum_{j=1}^n \theta_j x_j \right\|^q\right)^{1/q}\\
&=\left(\sum_{\theta_j = \pm 1} \left\|\frac{1}{2^{n/q}}\sum_{j=1}^n \theta_j x_j \right\|^q\right)^{1/q}
\le D_q \left\|\left(\sum_{\theta_j=\pm 1}\left|\frac{1}{2^{n/q}} \sum_{j=1}^n \theta_j x_j \right|^q\right)^{1/q}\right\|\\ 
&= D_q \left\| \left(\frac{1}{2^n}\sum_{\theta_j =\pm 1} \left|\sum_{j=1}^n \theta_j x_j \right|^q \right)^{1/q} \right\|
= D_q \left\|\left(\int_0^1 \left|\sum_{i=1}^n r_i(t) x_i \right|^qdt \right)^{1/q}\right\| \\
&\le D_qB_q \left\|\left(\sum_{i=1}^n |x_i|^2\right)^{1/2}\right\|.
\end{align*}
On the other hand applying $p$-convexity of $X$ and Khintchine's inequality,
\begin{align*}
\int_0^1 \left\|\sum_{i=1}^n r_i(t) x_i \right\| dt &\ge \left(\int_0^1 \left\|\sum_{i=1}^n r_i(t) x_i \right\|^pdt \right)^{1/p}
=\left(\sum_{\theta_j=\pm 1} \left\|\frac{1}{2^{n/p}} \sum_{j=1}^n \theta_j x_j \right\|^p\right)^{1/p}\\
&\ge C_p \left\| \left(\sum_{\theta_j=\pm 1} \frac{1}{2^n} \left| \sum_{j=1}^n \theta_j x_j \right|^p \right)^{1/p}\right\|
= C_p \left\|\left(\int_0^1 \left|\sum_{i=1}^n r_i(t) x_i \right|^p dt\right)^{1/p} \right\|\\
&\ge C_p A_p \left\|\left(\sum_{i=1}^n |x_i|^2\right)^{1/2}\right\|.
\end{align*}

\end{proof}

In the sequel $(\Omega,\Sigma,\mu)$ denotes a $\sigma$-finite
measure space. The space
of all (equivalence classes of) $\Sigma$-measurable real
functions defined on $\Omega$ is denoted by $L^0 = L^0(\mu)$. $L^0$
is a lattice with the pointwise order, that is $f\le g$ whenever
$f(t)\le g(t)$ a.e.

In this article we will consider three types of measure spaces.
$(\Omega, \Sigma,\mu)$ will be either nonatomic with $\mu(\Omega)
= \infty$ or nonatomic with $\mu(\Omega) < \infty$, or purely
atomic with counting measure. In the last case we identify
$\Omega$ with $\mathbb{N}$, where $\Sigma$ consists of all subsets of $\Bbb N$ and  $\mu(\{n\}) = 1$ for every $n\in\Bbb N$.

A function $\phi : \Bbb R_+ \rightarrow \Bbb R_+$ is said to be an
{\it Orlicz function} if $\phi(0) = 0$, $\phi$  is increasing (meaning strictly increasing), continuous,  and $\lim_{u \rightarrow \infty} \phi
(u) = \infty$.

Several growth conditions of Orlicz functions will be considered.
These conditions and also equivalence relations between Orlicz
functions will be given in three different versions dependently on
the measure space.  Further, without additional comments (unless
they are necessary), we will always associate the infinite
non-atomic measure with conditions ''for all arguments'', the finite
non-atomic measure with properties ''for large arguments'' and
finally the purely atomic measure with those defined ''for small
arguments''.

In the sequel we will use   the  numbers  called {\it Matuszewska-Orlicz indices},  which
characterize growth conditions of  real valued functions, in particular 
Orlicz functions $\phi$. 
There are three parallel definitions, for all, large or small
arguments.

\begin{definition} Let $\phi$ be an Orlicz function.  The {\it lower
Matuszewska-Orlicz index} $\alpha_\phi$ for all arguments (resp.
large arguments;  small arguments) is defined as follows

\noindent $\alpha_\phi = \sup\{ p\in \Bbb R: $ there exists  $ c>0$
(resp. there exist $c>0$  and $v\ge 0$; there exist $c>0$  and  $v>0$
such that
$$
\phi(au)\ge ca^p \phi(u)
$$
for all $a\ge 1$ and $u\ge 0$ (resp. $u\ge v$; $0 <u\le au\le
v)\}$.

We define {\it upper Matuszewska-Orlicz index} $\beta_\phi$ for all
arguments (resp. large arguments; small arguments) as

\noindent $\beta_\phi = \inf \{ p\in \Bbb R:$  there exists $c>0$
(resp. there exist $c>0$ and $v\ge 0$; there exist $c>0$ and $v>0$
such that
$$
\phi(au)\le ca^p \phi(u)
$$
for all $a\ge 1$ and $u\ge 0$ (resp. $u\ge v$; $0 <u\le au\le
v)\}$.
\end{definition}

The {\it Orlicz space}
generated by an Orlicz function $\phi$ is denoted by $L_\phi$ and
 is defined as the set of all $f\in L^0$ such that
\[
I_\phi(\lambda f) = \int_\Omega \phi(\lambda |f(t)|)d\mu = \int
\phi (\lambda f) < \infty
\]
 for some $\lambda > 0$ dependent on
$f$.  The Minkowski functional of the set $\{f\in L^0: I_\phi(f) \le 1\}$, 
\[
\| f \| = \| f \|_\phi = \inf \{\varepsilon > 0 :
I_\phi({f}/{\varepsilon}) \le 1\}
\]
is finite on $L_\phi$. In the case of counting measure, usually  the Orlicz space is denoted by $l_\phi$ and its
elements are sequences $x = \{x(n)\}$.

The next result provides necessary and sufficient condition for  $\|\cdot\|$ to be a quasi-norm \cite{MatOr}.

\begin{theorem}\label{th:quasinorm}
The Minkowski functional $\|\cdot\|$ is a quasi-norm in $L_\phi$ if and only if 
the upper index $\alpha_\phi > 0$.

 If $\phi$ is a convex function then $\|\cdot\|$ is a  norm in $L_\phi$, called the Luxemburg norm.
 
 \end{theorem}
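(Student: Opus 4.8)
The plan is to isolate the only nontrivial axiom. Properties (1) and (2) of a quasi-norm hold for \emph{every} Orlicz function: homogeneity (2) follows by the change of variable $\delta=\varepsilon/|a|$ in the definition of $\|\cdot\|$, while (1) follows because $\phi$ is increasing with $\phi(0)=0$, so $I_\phi(f/\varepsilon)\to\infty$ as $\varepsilon\to0^+$ whenever $f\neq0$ (monotone convergence). Thus the whole content of the theorem is that the quasi-triangle inequality (3) holds iff $\alpha_\phi>0$. Throughout I would record the standard modular fact that $\|f\|\le1$ iff $I_\phi(f)\le1$; the nontrivial implication $\|f\|\le1\Rightarrow I_\phi(f)\le1$ comes from letting $\varepsilon\downarrow\|f\|$ and using that $\phi(|f|/\varepsilon)\uparrow\phi(|f|)$ (monotone convergence, continuity of $\phi$).

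The bridge between the index and (3) is the elementary equivalence: $\alpha_\phi>0$ iff there is $C\ge1$ with $\phi(Cu)\ge2\phi(u)$ for all $u\ge0$ (resp.\ for $u\ge v$; for small $u$). For the forward direction I would fix $0<p<\alpha_\phi$, take $c>0$ with $\phi(au)\ge ca^p\phi(u)$ for $a\ge1$, and choose $C$ so large that $cC^p\ge2$. Conversely, iterating $\phi(Cu)\ge2\phi(u)$ gives $\phi(C^ku)\ge2^k\phi(u)$, and splitting any $a\ge1$ as $C^k\le a<C^{k+1}$ yields $\phi(au)\ge\tfrac12 a^{p}\phi(u)$ with $p=\log_C2>0$, so $\alpha_\phi\ge p>0$.

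For sufficiency I would assume the bridge condition and prove (3) with constant $2C$. By homogeneity it suffices to treat $\|f\|,\|g\|\le1$, hence $I_\phi(f),I_\phi(g)\le1$. Using $\phi(w/C)\le\tfrac12\phi(w)$ (the bridge condition with $u=w/C$) together with the pointwise bound $\phi\!\left(\tfrac{|f+g|}{2C}\right)\le\phi\!\left(\tfrac{\max(|f|,|g|)}{C}\right)\le\tfrac12\phi(\max(|f|,|g|))\le\tfrac12\bigl(\phi(|f|)+\phi(|g|)\bigr)$, integration gives $I_\phi\!\left(\tfrac{f+g}{2C}\right)\le\tfrac12\bigl(I_\phi(f)+I_\phi(g)\bigr)\le1$, so $\|f+g\|\le2C$; rescaling by $\max(\|f\|,\|g\|)$ yields $\|f+g\|\le2C(\|f\|+\|g\|)$ in general. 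The convex case is the same computation with Jensen's inequality replacing the bridge: $I_\phi((f+g)/2)\le\tfrac12(I_\phi(f)+I_\phi(g))\le1$ gives subadditivity after rescaling, so $\|\cdot\|$ is a norm (and convexity forces $\phi(u)/u$ nondecreasing, hence $\alpha_\phi\ge1$, consistently).

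For necessity I argue by contraposition: if $\alpha_\phi=0$ the bridge condition fails, so for every $R\ge1$ there is $v>0$ with $\phi(Rv)<2\phi(v)$. Placing two \emph{disjointly supported} bumps $g_1,g_2$ of height $Rv$ on sets of measure $1/\phi(Rv)$ gives $I_\phi(g_i)=1$, whence $\|g_i\|\le1$, while $I_\phi\!\left(\tfrac{g_1+g_2}{R}\right)=2\,\phi(v)/\phi(Rv)>1$, so $\|g_1+g_2\|\ge R$. Since $R$ is arbitrary and $\|g_1\|+\|g_2\|\le2$, no constant $C$ can satisfy (3); hence $\|\cdot\|$ is not a quasi-norm. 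The main obstacle, and the step needing the most care, is making this bump construction legitimate in each of the three measure models: in the non-atomic infinite case any height and any mass $1/\phi(Rv)$ are available (conditions ``for all arguments''); in the non-atomic finite case the constraint $1/\phi(Rv)\le\mu(\Omega)$ forces $v$ large, matching the ``large arguments'' index; and in the counting case the mass must be a positive integer, forcing $\phi(Rv)\le1$ and hence $v$ small, matching the ``small arguments'' index. This correspondence is precisely why the three parallel versions of $\alpha_\phi$ are the right invariants for the three measure spaces.
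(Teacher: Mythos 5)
Your proof of the main equivalence (quasi-norm $\Leftrightarrow\alpha_\phi>0$) is correct and follows essentially the paper's route: your ``bridge'' condition $\phi(Cu)\ge 2\phi(u)$ is exactly the paper's inequality $\phi(u/K)\le\tfrac12\phi(u)$, obtained and iterated the same way; your sufficiency estimate via $\phi\bigl(\tfrac{|f+g|}{2C}\bigr)\le\tfrac12\bigl(\phi(|f|)+\phi(|g|)\bigr)$ is the paper's modular computation in slightly different clothing; and your necessity argument is the contrapositive of the paper's, which applies the quasi-triangle inequality directly to two disjoint characteristic functions of equal measure to get $2\phi(t)\le\phi(Kt)$ for all $t$. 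One caveat on your necessity sketch for counting measure: $1/\phi(Rv)$ need not be an integer, so you cannot arrange $I_\phi(g_i)=1$ exactly; one must take $m=\floor*{1/\phi(Rv)}$ atoms and rework the lower bound on $\|g_1+g_2\|$ (or argue, as one can, through the explicit formula $\|x_i\|=Rv/\phi^{-1}(1/m)$ and interpolate between the discrete values), so this case needs genuine extra work --- though note the paper itself only treats the non-atomic infinite case.

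The one genuine gap is the convex case. From $I_\phi\bigl(\tfrac{f+g}{2}\bigr)\le\tfrac12\bigl(I_\phi(f)+I_\phi(g)\bigr)\le 1$ and rescaling by the \emph{common} factor $\max(\|f\|,\|g\|)$ you obtain only $\|f+g\|\le 2\max(\|f\|,\|g\|)$, i.e.\ midpoint convexity of the unit ball; this is not subadditivity, and it fails to give $\|f+g\|\le\|f\|+\|g\|$ whenever $\|f\|\ne\|g\|$. The triangle inequality needs the asymmetric convex combination, which is precisely what the paper uses: for $\alpha>\|f\|$ and $\beta>\|g\|$ write $\tfrac{f+g}{\alpha+\beta}=\tfrac{\alpha}{\alpha+\beta}\,\tfrac{f}{\alpha}+\tfrac{\beta}{\alpha+\beta}\,\tfrac{g}{\beta}$, apply convexity of $\phi$ pointwise to get $I_\phi\bigl(\tfrac{f+g}{\alpha+\beta}\bigr)\le\tfrac{\alpha}{\alpha+\beta}I_\phi\bigl(\tfrac{f}{\alpha}\bigr)+\tfrac{\beta}{\alpha+\beta}I_\phi\bigl(\tfrac{g}{\beta}\bigr)\le 1$, conclude $\|f+g\|\le\alpha+\beta$, and take infima over $\alpha,\beta$. (Alternatively, one could iterate your midpoint inequality to dyadic combinations and pass to the limit by Fatou to show the unit ball is convex, but that argument is not in your text.) Everything else stands.
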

 
 \begin{proof} We will consider here only the case when the measure is non-atomic and infinite. Then according to our convention we use the index $\alpha_\phi$ for all arguments.
 
 It is clear  that $\|af\|= |a|\|f\|$ for $a\in \mathbb{R}$, and $\|f\|=0$ if and only if $f=0$ a.e..
 If $\alpha_\phi >0$ then there exists $p>0$, such that $\phi(au) \ge C a^p\phi(u)$ for some $C>0$ and all $a\ge1$, $u\ge 0$. Setting then $a>1$ such that $C^{-1}a^{-p} = \frac12$ and $K=a$ we get
 \begin{equation}\label{eq: 001}
 \phi\left(\frac{u}{K}\right) \le \frac12 \phi(u), \ \ \ \ \text{for all} \ \ \ u\ge 0. 
 \end{equation}  
Observe also that in view of monotonicity of $\phi$ we have that 
\[
\phi(\lambda t + (1 - \lambda)s) \le \phi(t) + \phi(s), \ \ \text{for all} \ \ \ t,s \ge 0,\ 0\le \lambda \le1.
\]
Letting then $f,g\in L_\phi$ with $I_\phi\left(\frac{f}{\alpha}\right) \le 1$ and  $I_\phi\left(\frac{g}{\beta}\right) \le 1$ we get
\begin{align*}
I_\phi\left(\frac{f+g}{K(\alpha + \beta)}\right) &\le \int\phi\left(\frac{\alpha}{\alpha+\beta} \frac{|f|}{K\alpha} + \frac{\beta}{\alpha+\beta} \frac{|g|}{K\beta}\right) \\
&\le I_\phi\left(\frac{f}{K\alpha}\right) + I_\phi\left(\frac{g}{K\beta}\right) \\
&\le \frac12 I_\phi\left(\frac{f}{\alpha}\right) + \frac12 I_\phi\left(\frac{g}{\beta}\right) \le 1.  
\end{align*}
 It follows $\|f+g\| \le K(\|f\| +\|g\|)$ for any $f,g \in L_\phi$.  
 
 If $\phi$ is convex then 
 \[
\phi(\lambda t + (1 - \lambda)s) \le \lambda\phi(t) + (1-\lambda)\phi(s), \ \ \text{for all} \ \ \ t,s \ge 0,\ 0\le \lambda \le1.
\]
Then analogously as in general case letting  $I_\phi\left(\frac{f}{\alpha}\right) \le 1$ and $I_\phi\left(\frac{g}{\beta}\right) \le 1$, and applying cnvexity of $\phi$, we get
\begin{align*}
I_\phi\left(\frac{f+g}{\alpha + \beta}\right) \
&\le \frac{\alpha}{\alpha+\beta} I_\phi\left(\frac{f}{\alpha}\right) + \frac{\beta}{\alpha+\beta} I_\phi\left(\frac{g}{\beta}\right) \le 1, 
\end{align*}
 which implies $\|f+g\| \le \|f\| + \|g\|$.

 Let now $\|\cdot\|$ be a quasi-norm. Given a set $A$ with finite measure, clearly 
 \[
 \|\chi_A\| = \frac{1}{\phi^{-1}\left(\frac{1}{\mu(A)}\right)}.
 \]
 By non-atomicity of $\mu$ for any $t>0$ there exist disjoint sets $A, B$ such that $\mu(A) = \mu(B) =t$. By the assumption that $\|\cdot\|$ is a quasi-norm, there exists $M>1$ such that 
 \[
 \frac{\|\chi_A + \chi_B\|}{\|\chi_A\| + \|\chi_B\|} = \frac{\phi^{-1}\left(\frac{1}{t}\right)}{2\phi^{-1}\left(\frac{1}{2t}\right)} \le M, 
 \]
 for all $t>0$. Therefore setting $K=2M$ we get
 \[
 2\phi(t) \le \phi(Kt),
 \]
 for all $t>0$. Taking now $a\ge 1$ there exists $m\in\mathbb{N}$ such that $K^{m-1} \le a < K^m$.  For $p= \frac{\ln 2}{\ln K}$ we have $2^{m-1} = (K^{m-1})^p$. It follows that for all $t\ge 0$ and $a\ge 1$, 
 \[
 \phi(at) \ge \phi(K^{m-1} t) \ge 2^{m-1} \phi(t) = (K^{m-1})^p \phi(t) \ge a^p K^{-p} \phi(t).
 \]
Consequently $\alpha_\phi > 0$, and the proof is completed.  
  
 \end{proof}

 {\bf For the entire paper we assume that $\alpha_\phi > 0$. Then the space $L_\phi$ equipped with the Minkowski functional $\|\cdot\|$ is a quasi-Banach space.}

Recall that Orlicz functions $\phi$ and $\psi$ are {\it equivalent}
for all arguments (resp. large arguments; small arguments) if there
exist  positive constants $K_1, K_2$ (resp.  positive constants $K_1, K_2$ and
$v\ge 0$;  positive constants $K_1, K_2$ and $v>0$) such
that 
\[
K_1^{-1}\psi(K_2^{-1}u)\le \phi(u)\le K_1\psi(K_2u)
\]
 for all $u\ge 0$ (resp.
$u\ge v$; $u\le v$). Two equivalent Orlicz functions define the same
Orlicz spaces with equivalent quasi-norms. In fact it is known more, two Orlicz spaces on non-atomic infinite measure (resp. non-atomic finite measure; purely atomic measure) are equal with equivalent quasi-norms if and only if 
the corresponding Orlicz functions are equivalent for all arguments (resp. large arguments; small arguments) \cite{MatOr, Mus, Lux}.

We start with the most classical growth property of $\phi$,  $\Delta_2$-condition.
We say that an Orlicz function $\phi$ satisfies {\it condition
$\Delta_2$} ($\phi\in \Delta_2$) for all arguments (resp. large arguments; small
arguments) if there exists a positive constant $K$ (resp. there exist a positive constant $K$ and $v\ge 0$; a positive constant $K$ and $v>0$
) such that 
\[
\phi(2u)\le K\phi(u)
\]
 for all $u\ge
0$ (resp. $u\ge v$; $u\le v$). 

The condition $\Delta_2$  is intimately
connected to  more subtle conditions $\Delta^p$ and $\Delta^{*p}$.

\begin{definition}  An Orlicz function $\phi$ satisfies {\it
condition $\Delta^q$} ($\phi\in \Delta^q$), $q>0$, for all arguments (resp. large
arguments; small arguments) if there exists a positive constant
$K$ (resp. there exist a positive constant $K$ and $v\ge 0$; a positive
constant $K$ and $v>0$) such that 
\[
\phi(au)\le K a^q\phi(u)
\]
 for all $a\ge 1$ and $u\ge 0$ (resp. $u\ge v$; $au\le v$).
\end{definition}

An Orlicz function $\phi$ satisfies condition $\Delta^{*p}$ ($\phi\in \Delta^{*p}$), $p> 0$,
 for all arguments (resp. large arguments; small arguments) if
there exists a positive constant $K$ (resp. there exist a positive constant $K$
and $v\ge 0$; a positive constant $K$ and $v>0$)
such that 
\[
\phi(au)\ge K a^p \phi(u)
\]
 for all $a\ge 1$ and $u\ge 0$ (resp. $u\ge v$; $  au\le v$).

\vspace{2 mm}

\begin{remark}\label{rem:delta}
If  $\phi\in \Delta^q$, respectively $\phi\in \Delta^{*p}$,  then   $\phi\in \Delta^{q_1}$, respectively $\phi\in \Delta^{*p_1}$, for any $q_1 > q$, respectively for any $0<p<p_1$. Therefore
\[
\alpha_\phi = \sup\{p : \phi\in \Delta^{*p}\},\ \ \ 
\beta_\phi = \inf\{q : \phi\in\Delta^q\}.
\]
\end{remark}

Given an Orlicz function $\phi$, the function $\phi^*$  is defined as 
\[
\phi^*(u) = \sup_{w\ge0}\{uw - \phi(w)\}.
\]
It is  called the Young conjugate or the compelmentary function, or the Legendre transformation of $\phi$.   Let's point out here that $\phi^*$ may assume
infinite values although $\phi$ itself is finite.
It is easy to
check  that $\phi^* : \Bbb R_+\rightarrow [0,+\infty]$ is convex,
$\phi^*(0) = 0$, $\phi^*$ is left continuous and $\phi^*$ is not
identically equal to infinity.  Moreover, it is well known and easy to check that $\phi^{**}
\le \phi$ and $\phi^{**}$ is the largest convex minorant of $\phi$. Consequently $\phi = \phi^{**}$ whenever $\phi$ is convex \cite[Theorem 1 on p. 175]{IT}.  

Equivalent relations for Orlicz functions possibly
assuming infinite values (like conjugate functions) are defined in the same manner like for the Orlicz finite valued functions.
Observe that in the case when $\phi^*$ satisfies condition
$\Delta_2$ for all and large arguments, it assumes only finite
values, and when this condition is satisfied for small arguments,
then $\phi^*$ may be replaced by an equivalent  Orlicz  convex finite valued function.

Let 
\[
E_\phi= \{f\in L^0:\ I_\phi(\lambda f) < \infty\ \  \text{for every}\ \  \lambda >0\}.
\]  
$E_\phi$ is  a  closed subspace of $L_\phi$ and  is usually called the subspace of finite elements.  It is well known that $E_\phi$ is a closure in $L_\phi$ of
the set of simple functions with finite measure supports. 
Moreover
$L_\phi = E_\phi$ if and only if $\phi$ satisfies condition
$\Delta_2$ \cite[Theorem 8.14]{Mus}. 

If $\phi$ is a convex Orlicz function satisfying condition $\Delta_2$ then  the dual space  $(E_\phi)^*$ is canonically isomorphic
to $L_{\phi^*}$  via integral functionals. In fact 
\[
(E_\phi, \|\cdot\|_\phi)^* \simeq  (L_{\phi^*}, \|\cdot\|_{\phi^*}^0), 
\]
where the symbol $\simeq$ means the spaces are linearly isometric. Here 
\[
\|f\|_{\phi}^0 = \|f\|^0 = \sup\left\{\int_I f\,g : I_{\phi^*}(g) \leq 1\right\}
\]
is the Orlicz norm in $L_\phi$.  It is well known that $\|f\| \leq \|f\|^0 \leq 2\|f\|$ for $f\in L_\phi$. \cite{KR, Lux, Mus}. 

The next result was proved for Musielak-Orlicz spaces in \cite{K1998}.

\begin{proposition}\label{prop:linfty}  If an Orlicz function $\phi$ does not satisfy the corresponding condition
$\Delta_2$, then $L_\phi$ contains an order isomorphically isometric copy of
$\ell_\infty$.
\end{proposition}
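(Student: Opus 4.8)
The plan is to realize $\ell_\infty$ as the closed sublattice spanned by a sequence of disjointly supported nonnegative functions $(f_k)$ in $L_\phi$ chosen so that $\|f_k\|_\phi = 1$ for every $k$ while $\sum_k I_\phi(f_k) \le 1$. Granting such a sequence, I would define $T : \ell_\infty \to L_\phi$ by $T(a) = \sum_k a_k f_k$, the sum being unambiguous because the supports are pairwise disjoint. For $\|a\|_\infty \le 1$, disjointness gives $I_\phi(T(a)) = \sum_k I_\phi(a_k f_k) \le \sum_k I_\phi(f_k) \le 1$, whence $\|T(a)\|_\phi \le 1$; conversely the lattice property yields $\|T(a)\|_\phi \ge |a_{k_0}|\,\|f_{k_0}\|_\phi = |a_{k_0}|$ for every $k_0$, so $\|T(a)\|_\phi \ge \|a\|_\infty$. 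By homogeneity $T$ is an isometry, and since the $f_k\ge 0$ have disjoint supports, $T$ preserves lattice operations, producing the desired order isomorphically isometric copy.

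The heart of the argument is the construction of the $f_k$, and this is where the failure of $\Delta_2$ enters. First I would record the quantitative consequence of $\phi \notin \Delta_2$: since $\phi \in \Delta_2$ is equivalent to $\beta_\phi < \infty$ (iterating the doubling inequality turns it into a $\Delta^q$ condition, cf. \propref{prop:linfty} setting), the hypothesis gives $\beta_\phi = \infty$, and a short iteration upgrades this to the statement that \emph{for every} integer $m$ one has $\sup_u \phi((1+1/m)u)/\phi(u) = \infty$ on the relevant range of arguments. Indeed, were this supremum a finite constant $C$ for a single $m$, then comparing any $a\ge 1$ with the powers $(1+1/m)^j$ would force $\phi(au)\le C a^{q}\phi(u)$ with $q = \log C/\log(1+1/m)$, i.e.\ $\phi\in\Delta^q$ for a finite $q$, contradicting $\beta_\phi=\infty$.

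Using this, I would build one function $g = \sum_m u_m \chi_{B_m}$ on pairwise disjoint sets $B_m$ of measure $\beta_m$: choose $u_m$ so that $\phi((1+1/m)u_m) > 2^{2m}\phi(u_m)$, then choose $\beta_m$ so that $\phi(u_m)\beta_m = 2^{-m}$. Then $I_\phi(g) = \sum_m 2^{-m} \le 1$, while for any fixed $\lambda > 1$ every index $m > 1/(\lambda-1)$ satisfies $\phi(\lambda u_m)\beta_m \ge \phi((1+1/m)u_m)\beta_m > 2^{m}$, so $I_\phi(\lambda g) = \infty$; consequently $\|g\|_\phi = 1$. Scaling the measures $\beta_m$ down and repeating on disjoint regions produces a sequence $(f_k)$ with $\|f_k\|_\phi = 1$ and $I_\phi(f_k) \le 2^{-k}$, which is exactly what the first paragraph requires.

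Finally I would treat the three measure settings. In the non-atomic infinite case the sets $B_m^{(k)}$ of prescribed positive measure exist with room to spare. In the non-atomic finite case I would use the large-argument form of the indices and additionally ensure $\sum_{k,m}\beta_m^{(k)} \le \mu(\Omega)$, which is possible by taking the $u_m$ large enough that $\phi(u_m)\ge 1$. The purely atomic case requires the most care, and I expect it to be the main technical obstacle: there the weights must be integer coordinate-multiplicities, so I would pass to the small-argument form, pick $u_m \to 0$, realize each $\beta_m$ as an integer number of coordinates carrying the value $u_m$, and control the rounding error (at most $\phi(u_m)$ per block) so that both $I_\phi(f_k) \le 2^{-k}$ and the divergence $I_\phi(\lambda f_k) = \infty$ survive. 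The one genuinely delicate point throughout is forcing $I_\phi(\lambda f_k) = \infty$ \emph{simultaneously for all} $\lambda > 1$ rather than for a single factor, which is precisely why the refined $(1+1/m)$-form of the $\Delta_2$-failure, with multipliers tending to $1$, is essential.
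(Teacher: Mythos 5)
Your proposal is correct and takes essentially the same route as the paper's proof: the same quantitative upgrade of the failure of $\Delta_2$ to multipliers $1+1/m$ tending to $1$ (via the iteration/$\Delta^q$ argument), the same construction of disjointly supported norm-one functions $f_k$ with $I_\phi(f_k)\le 2^{-k}$ but $I_\phi(\lambda f_k)=\infty$ for every $\lambda>1$, and the same modular/lattice argument showing $a\mapsto\sum_k a_k f_k$ is an order isometry; the paper merely builds the $f_k$ as tail sums $\sum_{n>k}u_n\chi_{A_n^k}$ with $\phi(u_n)\mu(A_n^k)=2^{-n}$ instead of your rescaled copies of a single $g$, a cosmetic difference. (The paper, incidentally, carries this out only for non-atomic infinite measure, while you at least sketch the finite and purely atomic cases.)
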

\begin{proof}

We shall conduct the proof only in the case of a non-atomic and infinite measure $\mu$. 

Our first observation is that $\phi\in \Delta_2$ if and only if for every $a \ge 1$ there exists $K_a>0$ such that 
\begin{equation}\label{eq:0000}
\phi(au) \le K_a \phi(u)
\end{equation}
 for all $u>0$. In fact for $a>1$ there exists $m \in\mathbb{N}$ such that $2^{m-1} \le a < 2^m$. Setting $q = \ln{K}/ln{2}$ and $K_a = 2^qa^q$, for every $u\ge 0$, 
\[
\phi(au) \le \phi(2^mu) \le K^m\phi(u) = (2^m)^q\phi(u) \le K_a\phi(u).
\]

We claim now that if $\phi$ does not satisfy
$\Delta_2$, then there exists an infinite sequence
$(f_i)$ of functions in $L_\phi$ of disjoint supports
with $I_\phi (f_i) \leq \frac{1}{2^i}$ and
$\|f_i\| = 1$, $i\in \mathbb{N}$.  Indeed, if $\phi$ does not satisfy condition $\Delta_2$ then by (\ref{eq:0000}) there exists  a sequence $(u_n)\subset \mathbb{R}_+$ such that 
\begin{equation}\label{eq:111}
\phi\left(\left(1 + \frac1n\right) u_n\right) \ge 2^n\phi(u_n), \ \ \ n\in\mathbb{N}.
\end{equation}
Without loss of generality we can assume that $(u_n)$ is increasing.
Since $\mu$ is non-atomic and infinite, for every $i\in\mathbb{N}$ there exists an infinite sequence $(A_n^i)$ such that 
\begin{equation*}
\bigcup_{n=1}^\infty A_n^i \cap \bigcup_{n=1}^\infty A_n^j =\emptyset \ \ \ \text{if} \ \  i\ne j
\end{equation*}  and 
\begin{equation}\label{eq:11113}
\phi(u_n) \mu(A_n^i) = \frac{1}{2^n} \ \ \  \text{ for every} \ \ \ i,n\in \mathbb{N}.
\end{equation}
Define
\[
f_i = \sum_{n=i+1}^\infty u_n \chi_{A_n^i}, \ \ \ n\in\mathbb{N}.
\]
Then 
\begin{equation}\label{eq:114}
I_\phi(f_i) = I_\phi\left(\sum_{n=i+1}^\infty u_n \chi_{A_n^i}\right) = \sum_{n=i+1}^\infty \frac{1}{2^n} = \frac{1}{2^i},
\end{equation}
and  for any $\lambda >1$ there exists $n_0$ such that $\lambda > 1 + \frac{1}{n}$ for $n\ge n_0$. Thus  by (\ref{eq:111}) and (\ref{eq:11113}) we get
\[
I_\phi(\lambda f_i) \ge \sum_{n=n_0}^\infty \phi\left(\left(1 + \frac{1}{n}\right) u_n\right)\mu(A_n^i)\ge \sum_{n=n_0}^\infty \frac{2^n \phi(u_n)}{2^n \phi(u_n)} = \infty.
\] 
Consequently
\[
\|f_i\| = 1,\ \ \text{and} \ \ \ f_i\wedge f_j =0, \ \ \text{for} \ \ \ i\ne j.
\]
Hence and in view of (\ref{eq:114}),
\[
I_\phi\left(\sum_{i=1}^\infty f_i \right) = \sum_{i=1}^\infty \frac{1}{2^i} = 1.
\] 
It follows 
\[
\left\|\sum_{n=1}^\infty f_i\right\| =
\|f_i\| = 1, \ \ \ \text{ for} \ \ \ i\in\mathbb{N}. 
\]
 Therefore for  any $x = \{x(n)\} \in {\ell}_\infty$, $n\in\mathbb{N}$,
\[
 |x(n)| = \|x_n f_n\| \le \left\|\sum_{n=1}^\infty x(n) f_n\right\| \ \ \ 
\text{and} \ \ \
\left\|\sum_{n=1}^\infty x(n) f_n\right\| \le \sup_n|x(n)| \left\|\sum_{n=1}^\infty f_n\right\| = \sup_n|x(n)|.
\]
Hence 
\[
\left\|\sum_{n=1}^\infty x(n) f_n\right\| = \sup_n|x(n)| = \|x\|_\infty,
\]
 and thus the closure of the linear span of $(f_n)$ in $L_\phi$ is lattice isomorphically isometric  to $\ell_\infty$.

\end{proof}

\begin{lemma} \label{lem:equiv}
Conditions $\Delta^q$ and $\Delta^{*p}$, $p,q>0$, are
preserved under equivalence relation of Orlicz functions.
\end{lemma}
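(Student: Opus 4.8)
The plan is to show that if $\psi$ is equivalent to $\phi$ and $\psi\in\Delta^q$ (respectively $\psi\in\Delta^{*p}$), then $\phi\in\Delta^q$ (respectively $\phi\in\Delta^{*p}$); since the equivalence relation of Orlicz functions is symmetric, this single implication for each condition suffices. I would carry out the argument in the case of all arguments, recording the equivalence as
\[
K_1^{-1}\psi(K_2^{-1}u)\le \phi(u)\le K_1\psi(K_2 u),\qquad u\ge 0,
\]
and I would first note that one may assume $K_1,K_2\ge 1$, since enlarging either constant only weakens both inequalities (using that $\psi$ is increasing). The whole proof then rests on one elementary substitution: inserting the factor $K_2^{2}=K_2\cdot K_2$ so as to convert an argument $K_2^{-1}u$ into $K_2 u$ (or vice versa) at the cost of a fixed power of $K_2$, together with the growth hypothesis on $\psi$.

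For condition $\Delta^q$ this goes through in a single clean chain. Starting from the right-hand equivalence and the hypothesis $\psi(au)\le Ka^q\psi(u)$ (applied with the admissible factor $a\ge 1$), I would estimate $\phi(au)\le K_1\psi(a\,K_2 u)\le K_1 K a^q\,\psi(K_2 u)$, and then bound $\psi(K_2 u)=\psi\!\big(K_2^{2}\cdot K_2^{-1}u\big)\le K K_2^{2q}\psi(K_2^{-1}u)\le K K_2^{2q}K_1\,\phi(u)$, where the last step uses the left-hand equivalence. Combining yields $\phi(au)\le K_1^{2}K^{2}K_2^{2q}a^q\phi(u)$, i.e.\ $\phi\in\Delta^q$, valid for every $a\ge 1$.

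The condition $\Delta^{*p}$ is where the main obstacle appears. The analogous computation gives $\phi(au)\ge K_1^{-1}\psi(a\,K_2^{-1}u)\ge K_1^{-1}Ka^p\,\psi(K_2^{-1}u)$, but now I would need a \emph{lower} bound on $\psi(K_2^{-1}u)$ in terms of $\phi(u)$, and the substitution trick only produces an upper bound on $\psi(K_2^{-1}u)$ from $\psi(K_2 u)$; the hypothesis $\Delta^{*p}$, being a lower-growth condition, gives no upper control when the argument is magnified. The way around this is to split on the size of $a$. Instead of extracting $a^p$ first, for $a\ge K_2^{2}$ I would rescale $K_2^{-1}au$ against $K_2 u$ directly: writing $K_2^{-1}au=(aK_2^{-2})(K_2 u)$ with admissible factor $aK_2^{-2}\ge 1$, apply $\Delta^{*p}$ to obtain $\psi(K_2^{-1}au)\ge Ka^pK_2^{-2p}\psi(K_2 u)\ge Ka^pK_2^{-2p}K_1^{-1}\phi(u)$, whence $\phi(au)\ge K_1^{-2}KK_2^{-2p}a^p\phi(u)$. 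For the remaining bounded range $1\le a<K_2^{2}$ the scaling trick is useless, and I would instead invoke pure monotonicity of $\phi$: since $\phi(au)\ge\phi(u)$ and $a^p\le K_2^{2p}$, one gets $\phi(au)\ge K_2^{-2p}a^p\phi(u)$. Taking $K'=K_2^{-2p}\min\{K_1^{-2}K,1\}$ covers all $a\ge 1$, giving $\phi\in\Delta^{*p}$.

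Finally, I would remark that the versions for large and for small arguments follow from the identical estimates, the only change being routine bookkeeping of the threshold $v$ and the restricted ranges of $u$ and $au$ in the definitions; no new idea is required. The single genuine subtlety throughout remains the case split for $\Delta^{*p}$, forced by the asymmetry between the upper-type condition $\Delta^q$ and the lower-type condition $\Delta^{*p}$ under rescaling of the argument.
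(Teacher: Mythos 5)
Your proof is correct and follows essentially the same route as the paper's: the same insertion of the factor $K_2^2$, the same case split at $a = K_2^2$ with monotonicity handling the bounded range $1\le a\le K_2^2$, and the same resulting constants (the paper proves the symmetric implication, $\phi\in\Delta^{*p}$ gives $\psi\in\Delta^{*p}$, which is the same argument by symmetry of equivalence). The only difference is that you also write out the $\Delta^q$ case explicitly, which the paper omits as analogous.
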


\begin{proof} We will show this only for $\Delta^{*p}$  when the
equivalence relation and the condition are satisfied for all
arguments.  Let $\phi$ satisfy condition $\Delta^{*p}$ and  $\psi$
be equivalent to $\phi$ that is $K_1^{-1}\psi(K_2^{-1}u)\le \phi(u) \le
K_1\psi(K_2u)$ for all $u\ge 0$ and some $K_1, K_2\ge 1$.  Then for all
$w=K_2^{-1}u\ge 0$ and $a\ge K_2^2$, 
\[
K_1^{-1}\psi(aw) = K_1^{-2} K_1\psi(K_2aK_2^{-2}u)\ge K_1^{-2} 
\phi(aK_2^{-2}u)\ge K_1^{-2}Ka^pK_2^{-2p}\phi(u)\ge K_1^{-3}Ka^pK_2^{-2p}\psi(w).
\]
  If $1\le a\le K_2^2$, then for any $w\ge 0$,
  \[
  \psi(aw)\ge \psi(w) \ge K_2^{-2p}a^p\psi(w).
  \]
  Thus  we showed that $\psi$ satisfies condition
$\Delta^{*p}$.

\end{proof}

The next two results demonstrates possible  regularization of $\phi$ satisfying $\Delta^p$ or $\Delta^{*p}$. The earliest source of these theorems is in \cite{MatOr}. The thorough studies of these can be found in \cite{KMP}.
 
\begin{proposition}\label{prop:deltap}
 Given $q > 0$ the following assertions
are equivalent.
\begin{itemize}
\item[(a)]  An Orlicz function (resp. for $q\ge 1$, convex Orlicz function)  $\phi$ satisfies condition $\Delta^q$.
\item[(b)] There exists an Orlicz function (resp. for $q\ge 1$, convex Orlicz function )  $\psi$ equivalent to
$\phi$ such that for all $u\ge 0$, $a\ge 1$,
$$
\psi(au)\le a^q \psi(u).
$$
\item[(c)] There exists an Orlicz function (resp. for $q\ge 1$, convex Orlicz function)  $\psi$ equivalent to
$\phi$ such that $\psi(u^{1/q})$ is concave.
\end{itemize}
\end{proposition}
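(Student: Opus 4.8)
The plan is to prove the cycle $(a)\Rightarrow(c)\Rightarrow(b)\Rightarrow(a)$, isolating the single substantive step $(a)\Rightarrow(c)$; the other two implications are essentially formal. Throughout I record the elementary reformulation obtained by the substitution $g(t)=\phi(t^{1/q})$: writing $b=a^q\ge 1$ and $w=u^q$, condition $\Delta^q$ for $\phi$ reads $g(bw)\le Kb\,g(w)$ for all $b\ge1$, $w\ge0$, i.e. $t\mapsto g(t)/t$ is non-increasing up to the multiplicative constant $K$. Likewise, if $\psi$ is any Orlicz function then $\psi(u^{1/q})$ is concave exactly when $h(t):=\psi(t^{1/q})$ is concave, and since such $h$ has $h(0)=0$, concavity gives $h(bw)\le b\,h(w)$ for $b\ge1$, which transported back is precisely $\psi(au)\le a^q\psi(u)$. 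This already yields $(c)\Rightarrow(b)$ with the very same function $\psi$ (convex in the convex case), and $(b)\Rightarrow(a)$ follows because the inequality in $(b)$ says $\psi\in\Delta^q$ with constant $1$, whence $\phi\in\Delta^q$ by \lemref{lem:equiv} (in the convex case $\phi$ is convex by hypothesis).

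For $(a)\Rightarrow(c)$ in the general (not necessarily convex) case, I would let $\hat g$ be the least concave majorant of $g(t)=\phi(t^{1/q})$, i.e. the supremum of $\sum_i\lambda_i g(t_i)$ over all finite convex combinations $\sum_i\lambda_i t_i=t$. Then $g\le\hat g$ by definition, and the main estimate is the reverse bound $\hat g\le(1+K)g$: given a representation $\sum_i\lambda_i t_i=t$, I split the indices according to $t_i\le t$ or $t_i>t$, bounding $g(t_i)\le g(t)$ on the first set (monotonicity) and $g(t_i)\le K(t_i/t)g(t)$ on the second (the quasi-monotonicity of $g(t)/t$), so that $\sum_i\lambda_i g(t_i)\le g(t)+K\,t^{-1}\sum_i\lambda_i t_i\cdot g(t)=(1+K)g(t)$. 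Hence $\psi(u):=\hat g(u^q)$ satisfies $\phi\le\psi\le(1+K)\phi$ and is equivalent to $\phi$, while $\psi(u^{1/q})=\hat g(u)$ is concave by construction. It remains to check that $\hat g$ is a genuine Orlicz function: $\hat g(0)=0$ and continuity at $0$ follow from $0\le\hat g\le(1+K)g$; continuity elsewhere is automatic for finite concave functions; and $\hat g$ is \emph{strictly} increasing because a concave function that were constant on some interval would, by monotonicity of its slopes, be bounded thereafter, contradicting $\hat g\ge g\to\infty$.

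For $(a)\Rightarrow(c)$ in the convex case with $q\ge1$, the concave-majorant construction fails (an infimum of the convex functions $\alpha_i u^q+\beta_i$ need not be convex), so instead I would regularize the derivative. Write $p=\phi'$ for the right derivative, which is non-decreasing, and note that $\Delta^q$ together with convexity gives $\phi(u)\le u\,p(u)\le\phi(2u)\le K2^q\phi(u)$, hence $u\,p(u)\asymp\phi(u)$ and, for $v\ge u$, $p(v)/p(u)\lesssim(v/u)^{q-1}$. Set $\pi(u):=u^{q-1}\sup_{v\ge u}p(v)/v^{q-1}$. Then $\pi(u)/u^{q-1}$ is non-increasing by construction, while $\pi$ is itself non-decreasing (the same splitting argument as above, using $q\ge1$), and $p\le\pi\le Cp$ by the elasticity bound just recorded. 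Defining $\psi(u):=\int_0^u\pi$, convexity of $\psi$ follows from monotonicity of $\pi=\psi'$, concavity of $\psi(u^{1/q})$ follows from $\pi(u)/u^{q-1}$ being non-increasing (this is exactly the first-order condition that $\theta'(t)=q^{-1}\pi(t^{1/q})t^{1/q-1}$ be non-increasing for $\theta(t)=\psi(t^{1/q})$), and $\phi\le\psi\le C\phi$ gives equivalence.

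The main obstacle is the construction in $(a)\Rightarrow(c)$, and specifically the fact that the two regularizations are genuinely different: the only information supplied by $\Delta^q$ is the \emph{global} inequality with its constant $K$, which controls $g(t)/t$ (equivalently the elasticity $u\phi'(u)/\phi(u)$) only up to that constant, so one must manufacture a function for which the corresponding monotonicity holds \emph{with constant one}. In the convex case the added difficulty is that one must simultaneously retain convexity and force $\psi(u^{1/q})$ to be concave — two second-order constraints pulling in opposite directions — which is why passing to the derivative and regularizing $p(u)/u^{q-1}$, rather than regularizing $\phi$ itself, is the natural device; the compatibility of the two constraints is exactly what the hypotheses (convexity, i.e. elasticity $\ge1$, and $\Delta^q$, i.e. elasticity $\lesssim q$) guarantee. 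The versions for large and small arguments are obtained by carrying out the identical constructions on the relevant range of the variable.
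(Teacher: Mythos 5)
Your argument is correct, and the two formal implications coincide with the paper's: (c) $\Rightarrow$ (b) via concavity plus $h(0)=0$, and (b) $\Rightarrow$ (a) via Lemma~\ref{lem:equiv}. The substantive difference is in (a) $\Rightarrow$ (c). The paper uses a single integral construction for both the general and the convex case: it regularizes the quotient, setting $r(u)=\inf_{v\le t\le u}\phi(t)/t^{q}$ (constant below the threshold), and defines $\psi(u)=\int_0^u r(t)t^{q-1}\,dt$, so that $\psi(u^{1/q})=\frac1q\int_0^u r(t^{1/q})\,dt$ is concave because the integrand is non-increasing; in the convex case the \emph{same} $\psi$ is shown to be convex by checking that $r(t)t^{q-1}$ is non-decreasing, and the construction yields a smooth $\psi$ as a bonus. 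You instead split into two constructions: in the general case the least concave majorant $\hat g$ of $g(t)=\phi(t^{1/q})$, with the splitting estimate $\hat g\le(1+K)g$ (which is correct, and gives the equivalence with the explicit constant $1+K$ without any integration), and in the convex case a regularization of the derivative, $\pi(u)=u^{q-1}\sup_{v\ge u}p(v)/v^{q-1}$, $\psi=\int_0^u\pi$, whose verifications (monotonicity of $\pi$ using $q\ge1$, the bound $p\le\pi\le K2^qp$ from the elasticity estimate, and concavity of $\psi(t^{1/q})$ from monotonicity of $\pi(s)/s^{q-1}$) are all sound. Your correctly diagnosed inability of the majorant to serve the convex case is the price of your route; what it buys is a more conceptual general-case argument and explicit constants, whereas the paper's route buys uniformity across the two cases and smoothness of $\psi$.

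The one place your write-up is genuinely incomplete is the last sentence: the large- and small-argument versions are not obtained by "identical constructions on the relevant range." For large arguments the majorant must be taken after first extending $g$ below $v^q$ (say linearly, checking that quasi-monotonicity survives with a worse constant); gluing afterwards does not work, since the slope of $\hat g$ at $v^q$ can exceed the chord slope $\hat g(v^q)/v^q$ by a factor of order $K^2$, so joining a linear piece through the origin to $\hat g$ need not be concave. For small arguments one must likewise either extend the hypothesis to all arguments first or extend the constructed concave function beyond $v^q$ with a correctly chosen slope. This is precisely the step the paper does not gloss over: its $r$ is defined constant on $[0,v]$ in the large-argument case, and the small-argument case is reduced to the all-argument case via an explicitly constructed auxiliary function $\rho$, whose $\Delta^q$ property takes a page to verify. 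The fixes in your framework are routine, but they need to be written out rather than asserted.
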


\begin{proof} (a) $\Rightarrow$  (c) Let do it first for $\phi$
satisfying $\Delta^q$-condition for large arguments. Define
\[
r(u) =
\begin{cases}
\frac{\phi(v)}{v^q}, &\text{for $0\le u\le v$}\\
\inf_{v\le t\le u}\frac{\phi(t)}{t^q}, &\text{for $u>v$},
\end{cases}
\]
where  $v$ is a constant from condition $\Delta^q$ for large
arguments. Clearly, $r(u)$ is decreasing and continuous and
moreover for every $u\ge v$,
\begin{equation}\label{eq:11}
\frac1K\frac{\phi(u)}{u^q}\le r(u)\le\frac {\phi(u)}{u^q},
\end{equation}
 where $K$ is a constant in condition $\Delta^q$. Set 
 \[
 \psi(u) =\int_0^u r(t)t^{q - 1}dt.
 \]
 Clearly $\psi$ is an Orlicz function.
 
 If $\phi$ is convex and $q\ge 1$, then  the function $\psi$ is also convex since
$r(t)t^{q - 1}$ is increasing. Indeed, it is constant on $(0,v]$, and for $v\le u_1 < u_2$ we have
\[
r(u_1)u_1^{q - 1} = u_1^{q - 1} \inf_{v\le t\le
u_1}\frac{\phi(t)}{t^q}\le u_2^{q - 1}\inf_{v\le t\le
u_1}\frac{\phi(t)}{t^q},
\]
while by (\ref{eq:11}) and in view of that $\phi(u)/u$ is increasing,
\[
r(u_1)u_1^{q - 1}\le \frac{\phi(u_1)}{u_1} = u_2^{q -
1}\inf_{u_1\le t\le u_2} \frac{\phi(u_1)}{t^{q - 1}u_1} \le u_2^{q
- 1}\inf_{u_1\le t\le u_2}\frac{\phi(t)}{t^q}.
\]
Hence
\[
r(u_1)u_1^{q - 1} \le u_ 2^{q - 1}\min \left\{\inf_{v\le t\le
u_1}\frac{\phi(t)}{t^q} , \inf_{u_1\le t\le u_2}\frac{\phi(t)}{t^q}
\right\} = r(u_2)u_2^{q - 1}.
\]

In order to check that $\psi$ is equivalent to $\phi$, observe  by (\ref{eq:11}) that
for $u\ge 2v$, 
\[\psi(u)\ge r(u)\int_v^ut^{q - 1}dt \ge
\frac1q\left(1 - \frac{1}{2^q}\right)r(u)u^q \ge \frac1q\left(1 - \frac{1}{2^q}\right)\frac{1}{K}\phi(u),
\]
 which is half of what is needed. For the other inequality, note  that in view of (\ref{eq:11}) for $u\ge v$, 
\[
\psi(u) \le \int_0^v\frac{\phi(v)}{v^q}t^{q - 1}dt +
\int_v^u\frac{\phi(u)}{u^q}t^{q - 1} dt\le \frac1q(\phi(v) +
\phi(u))\le \frac{2}{q} \phi(u).
\]
Thus $\phi$ is equivalent to $\psi$. Next, an easy change of
variables  yields that $\psi(u^{1/q}) =
\frac1q\int_0^ur(t^{1/q})dt$.  Since $r(t^{1/q})$ is decreasing, we
conclude that the function $\psi(u^{1/q})$ is concave.

Since the case "for all arguments" is  an easy repetition of the
case "for large arguments" setting  $v=0$, what remains to examine
is the case when condition $\Delta^q$ is satisfied near zero.  If
we find an Orlicz function $\rho$ equivalent to $\phi$ and
satisfying condition $\Delta^q$ for all arguments, then the proof
can be done by the first step. Define
\[
\rho(u) =
\begin{cases}
\phi(u), &\text{for $0\le u\le v$}\\
cu^q + (\phi(v) - cv^q), &\text{for $u>v$},
\end{cases}
\]
where  $v$ is the number in condition $\Delta^q$ for small
arguments.   In case of non convex $\phi$, let  $c>0$ be any constant, and  in case of convex $\phi$, set $c= \frac{\phi'(v)}{qv^{q - 1}}$ where $\phi'(v)$ is a
left derivative of $\phi$ at $v$.  Clearly, $\rho$ is increasing  and
equivalent to $\phi$ for small arguments, so it is an Orlicz function.  If $\phi$ is convex then $\rho$ is also convex. 

Now it is enough to show
that $\rho(au)\le Ka^q\rho(u)$ for all $a\ge 1$ , $u\ge 0$ and some
positive constant $K$.  From the assumption of $\Delta^q$ for small arguments this inequality is obvious if $au\le v$. In
the case when $au>u>v$ and $\phi(v) - cv^q > 0$,
\begin{equation}\label{eq:001}
\rho(au) \le a^q [cu^q + (\phi(v) - cv^q)] = a^q \rho(u).
\end{equation}
When $au>u>v$ and $b = -\phi(v) + cv^q > 0$, then
\begin{equation}\label{eq:002}
\frac{\rho(au)}{a^q\rho(u)} \le \frac{ca^qu^q}{a^q(cu^q - b)} =
\frac{cu^q}{cu^q - b} = 1 + \frac{b}{cu^q - b} \le 1 +
\frac{b}{cv^q - b} = 1 + \frac{b}{\phi(v)} = k_0,
\end{equation}
where $k_0$ does not depend on $u$. Finally for $u\le v$ and $au>v$
we get the following  estimation
\[
\rho(au)\le ca^pu^q + \phi(v)\le a^qu^q\left(c + \frac{\phi(v)}{v^q}\right) =
k_1a^qu^q,
\]
with $k_1 = c + \frac{\phi(v)}{v^q}$ is not dependent on $u$. Applying
condition $\Delta^q$ for $a = \frac{v}{u} \ge 1$ and $u\le v$  we
get 
\[
\phi(v) = \phi\left(\frac{v}{u} u\right)\le K\left(\frac{v}{u}\right)^q\phi(u),
\]
which yields $u^q\le \frac{Kv^q}{\phi(v)} \phi(u)$. Hence
\begin{equation}\label{eq:003}
\rho(au)\le k_1a^qu^q\le k_2a^q \rho(u),
\end{equation}
 where $k_2 =\frac{k_1Kv^q}{\phi(v)} > 0$ does not depend on $u$. Combining (\ref{eq:001}), (\ref{eq:002}) and (\ref{eq:003}) we proved that $\rho\in\Delta^q$.\\

(c) $\Rightarrow$ (b) If $\psi(u^{1/q})$ is concave then 
the inequality in (b) is instant. \\

(b) $\Rightarrow$ (a) The inequality in (b) means the $\Delta^q$-condition for all arguments of $\psi$. Since $\phi$ is equivalent with $\psi$,  by  Lemma \ref{lem:equiv}, $\phi$ also satisfies $\Delta^q$-condition. 

\end{proof}

\begin{proposition} 
\label{prop:delta*p}
Given $p > 0$ the following conditions are
equivalent.
\begin{itemize}
\item[$(a)$] The Orlicz (resp. for $p\ge 1$, convex Orlicz) function $\phi$ satisfies condition $\Delta^{*p}$.
\item[$(b)$]There exists an Orlicz (resp. for $p\ge 1$, convex Orlicz) function $\psi$ equivalent to
$\phi$ such that for all $a\ge 1$, $u\ge 0$
$$
\psi(au)\ge a^p\psi(u).
$$
\item[$(c)$] There exists an Orlicz (resp. for $p\ge 1$, convex Orlicz) function $\psi$ equivalent to
$\phi$ such that $\psi(u^{1/p})$ is convex.
\end{itemize}
\end{proposition}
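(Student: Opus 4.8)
The plan is to prove the cycle $(a)\Rightarrow(c)\Rightarrow(b)\Rightarrow(a)$, as in \propref{prop:deltap}, with the two short implications being essentially formal. For $(c)\Rightarrow(b)$, if $g(s):=\psi(s^{1/p})$ is convex with $g(0)=0$, then $g$ is star-shaped, so $g(a^ps)\ge a^pg(s)$ for $a\ge1$; taking $s=u^p$ yields $\psi(au)\ge a^p\psi(u)$. For $(b)\Rightarrow(a)$, the inequality in $(b)$ is exactly condition $\Delta^{*p}$ for $\psi$ with constant $1$, and since $\psi$ is equivalent to $\phi$, \lemref{lem:equiv} transfers $\Delta^{*p}$ back to $\phi$.

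The real work is $(a)\Rightarrow(c)$, and here the construction of \propref{prop:deltap} cannot simply be dualized: replacing the infimum defining $r$ by a supremum and integrating $r(t)t^{p-1}$ produces a function that can fail to be equivalent to $\phi$ when $\phi(u)/u^p$ increases steeply, since the integral lags behind the jump. Instead I would convexify after a change of power. Treating first the case of all arguments, put $h(w)=\phi(w^{1/p})$; condition $\Delta^{*p}$ says precisely that $h(w)/w$ is essentially nondecreasing. Setting $\bar h(w)=w\sup_{0<t\le w}h(t)/t$ produces a function equivalent to $h$ with $\bar h(w)/w$ genuinely nondecreasing. Let $g$ denote the greatest convex minorant of $\bar h$; it is again an Orlicz function, and $\psi(u):=g(u^p)$ satisfies $\psi(u^{1/p})=g(u)$, which is convex, giving $(c)$. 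When $p\ge1$ the map $u\mapsto u^p$ is convex and increasing, so $\psi=g(u^p)$ is itself convex, as demanded in the convex variant.

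It remains to check that $g$ is equivalent to $\bar h$, and since $g\le\bar h$ is automatic, the reverse estimate is the crux and the step I expect to be hardest. I would prove
\[
g(w)\le\bar h(w)\le g(2w),\qquad w\ge0.
\]
The right inequality uses the one-dimensional convex-hull formula $g(2w)=\inf\{\lambda\bar h(a)+(1-\lambda)\bar h(b):\lambda a+(1-\lambda)b=2w,\ 0\le\lambda\le1\}$. Writing $\bar h(t)=tM(t)$ with $M$ nondecreasing, in any such representation the total weight $\lambda a+(1-\lambda)b=2w$ splits so that the portion carried by arguments $\ge w$ is at least $w$; since $M\ge M(w)$ there, one gets $\lambda\bar h(a)+(1-\lambda)\bar h(b)\ge M(w)\,w=\bar h(w)$, proving $\bar h(w)\le g(2w)$. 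Hence $g$ is equivalent to $\bar h$, and so to $h$, whence $\psi(u)=g(u^p)$ is equivalent to $\phi$, completing $(a)\Rightarrow(c)$.

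Finally, the large- and small-argument versions follow the pattern of \propref{prop:deltap}: for large arguments one extends $h$ below the threshold by a linear piece so that $M$ stays nondecreasing on all of $[0,\infty)$; for small arguments one first replaces $\phi$ by a function agreeing with it near the origin and equal to a multiple of $u^p$ beyond the threshold, which then lies in $\Delta^{*p}$ for all arguments, and applies the all-argument case.
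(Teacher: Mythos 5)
Your proof is correct, but the core implication $(a)\Rightarrow(c)$ is handled by a genuinely different construction than the paper's, and your stated reason for deviating is a misconception. The paper does exactly the dualization you dismiss: it sets $r(u)=\sup_{v\le t\le u}\phi(t)/t^p$ (constant below $v$) and $\psi(u)=\int_0^u r(t)t^{p-1}\,dt$. Your objection, that the integral lags behind a steep rise of $\phi(u)/u^p$, does not apply under hypothesis $(a)$: condition $\Delta^{*p}$ forces $\phi(t)/t^p\le \frac1K\,\phi(u)/u^p$ for $t\le u$, hence $r(u)\le\frac1K\,\phi(u)/u^p$, and integrating only over $[u/2,u]$ already gives $\psi(u)\ge r(u/2)\int_{u/2}^u t^{p-1}\,dt\ge\frac{2^p-1}{p}\,\phi(u/2)$; the lag is a factor $2$ in the argument, which is harmless because the paper's equivalence relation $K_1^{-1}\psi(K_2^{-1}u)\le\phi(u)\le K_1\psi(K_2u)$ permits rescaling of the argument. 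So the paper's route works, with the side benefits that $\psi$ is smooth and that convexity of $\psi$ (for convex $\phi$, $p\ge1$) falls out from monotonicity of $r(t)t^{p-1}$. Your alternative is nevertheless valid: passing to $h(w)=\phi(w^{1/p})$, correcting it to $\bar h(w)=w\sup_{0<t\le w}h(t)/t$ so that $\bar h(w)/w=M(w)$ is genuinely nondecreasing (and $h\le\bar h\le\frac1K h$ by $\Delta^{*p}$), and taking the greatest convex minorant $g$ of $\bar h$ is sound; your key estimate $\bar h(w)\le g(2w)$ is correct, and in fact needs no two-point reduction, since in any representation $\sum_i\lambda_ix_i=2w$ the mass carried by points $x_i\ge w$ is at least $w$, whence $\sum_i\lambda_i\bar h(x_i)\ge M(w)\,w=\bar h(w)$. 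Together with $g\le\bar h$ this yields equivalence of $\psi(u)=g(u^p)$ with $\phi$, while $\psi(u^{1/p})=g(u)$ is convex by fiat, and your reductions for $(c)\Rightarrow(b)\Rightarrow(a)$ and for the large/small-argument cases match the paper's in substance. In short: both proofs are correct; the paper's integral regularization buys smoothness and a uniform treatment with Proposition \ref{prop:deltap}, while yours is integration-free and makes the convexification conceptually transparent via the convex envelope.
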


\begin{proof} Since this result is a dual part of Proposition \ref{prop:deltap}, the
proof is analogous. We shall show only the essential construction
part of implication (a) $\Rightarrow$ (c), in the case when the
conditions are satisfied for large arguments.  Define
\[
r(u) =
\begin{cases}
\frac{\phi(v)}{v^{p }}, &\text{for $0\le u\le
v$}\\
\sup_{v\le t\le u}\frac{\phi(t)}{t^p}, &\text{for $u>v$},
\end{cases}
\]
where $v$ is a constant in condition $\Delta^{*p}$.  The function
$r(u)$ is increasing and satisfies the inequality
\begin{equation}\label{eq:115}
\frac{\phi(u)}{u^p}\le r(u)\le \frac1K \frac{\phi(u)}{u^p}
\end{equation}
 with $K$ from condition $\Delta^{*p}$.  Letting 
 \[
 \psi(u) = \int_0^u r(t)t^{p - 1}dt,
 \]
$\psi$ is an Orlicz function. If $\phi$ is convex and $p\ge 1$ then $\psi$ is also convex since $r(w) w^{p-1}$ is increasing.  By changing variables 
\[
\psi(u^{1/p})= \frac1p\int_0^ur(t^{1/p})dt,
\]
and clearly $r(t^{1/p})$ is increasing, so $\psi(u^{1/p})$  is also convex. It remains to  check that $\phi$ and $\psi$ are equivalent.  By (\ref{eq:115}) for $u\ge 2v$,
$$
\psi(u) \ge \int_{u/2}^{u}r(t)t^{p - 1}dt \ge
r\left(\frac{u}{2}\right)\int_{u/2}^u t^{p -1}dt \ge \frac
{\phi\left(\frac{u}{2}\right)}{(\frac{u}{2})^p} \left(\frac1p - \frac1{2^p p}\right)u^p = \phi\left(\frac u2\right)\frac{2^p - 1}{p}.
$$
 For the opposite inequality observe that by condition $\Delta^{*p}$
we have that $\frac1K \frac{\phi(u)}{u^p} \ge \frac{\phi(t)}{t^p}$ for $t\le u$. 
 Letting $u\ge v$,
\[
\psi(u)\le \int_0^v \frac{\phi(v)}{v^{p}} t^{p-1} dt + \frac1K
\int_v^u \frac{\phi(t)}{t^p} t^{p - 1} dt
 \le  \frac{\phi(v)}{p} + \frac{1}{K^2} \int_v^u  \frac{\phi(u)}{u^p} t^{p-1}\, dt \le \frac{1}{pK^2}\phi(u).
\]
 
\end{proof}

\begin{remark} In the case of convex Orlicz function and $p\ge 1$, the method used in Propositions \ref{prop:deltap}, \ref{prop:delta*p} produces function $\psi$ which is not only convex but also smooth.

There exist several other constructions of $\psi$ (eg. \cite{BS, KMP}).  If for instance in the proof of Proposition \ref{prop:delta*p} we  define
 the function $r(u)$  as 
\[
r(u) =
\begin{cases}
\frac{K\phi(v)}{v^p}, &\text{for $0\le u\le v$}\\
\sup_{t\ge u}\frac{\phi(t)}{t^p}, &\text{for $u>v$},
\end{cases}
\]
then $r(u)$ may not be continuous and thus $\psi$ will not be smooth.

\end{remark}

\begin{proposition} \label{prop:dual}
For $1<p<\infty$, a convex Orlicz function $\phi$
satisfies condition $\Delta^{*p}$ if and only if $\phi^*$
satisfies condition $\Delta^{q}$, where $\frac1{p} + \frac1q =
1$.
\end{proposition}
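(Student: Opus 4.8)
The plan is to read everything off the scaling behaviour of the Young conjugate. Two elementary identities drive the argument: directly from $\phi^*(w)=\sup_{u\ge0}\{uw-\phi(u)\}$, the substitutions $t=au$ and factoring out a constant give
\[
\bigl(\phi(a\,\cdot)\bigr)^*(w)=\phi^*\!\left(\frac{w}{a}\right),\qquad (c\phi)^*(w)=c\,\phi^*\!\left(\frac{w}{c}\right),\qquad a,c>0.
\]
I will also use that conjugation is order reversing ($f\le g$ implies $f^*\ge g^*$) and that, since $\phi$ is convex, $\phi^{**}=\phi$. Observe first that $\Delta^{*p}$ with $p>1$ forces $\phi(u)/u\to\infty$ (set $u=1$ in $\phi(au)\ge Ka^p\phi(u)$), so $\phi^*$ is finite valued and all manipulations are legitimate. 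The whole equivalence hinges on the arithmetic fact that, because $q=p/(p-1)$, one has $a^p=(a^{p-1})^{q}$, so the exponent pair $(p-1,p)$ for $\phi$ turns into the pair $(1,q)$ for $\phi^*$.

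For the forward direction, assume $\phi\in\Delta^{*p}$, that is $\phi(au)\ge Ka^p\phi(u)$ for all $a\ge1$, $u\ge0$; taking $a=1$ gives $K\le1$. Reading this as the pointwise inequality $\phi(a\,\cdot)\ge Ka^p\phi$ and applying conjugation together with the two identities yields
\[
\phi^*\!\left(\frac{w}{a}\right)\le Ka^p\,\phi^*\!\left(\frac{w}{Ka^p}\right),\qquad a\ge1,\ w\ge0.
\]
Writing $v=w/(Ka^p)$ this becomes $\phi^*(Ka^{p-1}v)\le Ka^p\phi^*(v)$. For a prescribed $b\ge1$ I solve $Ka^{p-1}=b$, i.e. $a=(b/K)^{1/(p-1)}$; since $K\le1\le b$ the exponent $a\ge1$, and $a^p=(b/K)^{q}$, so substitution gives $\phi^*(bv)\le K^{1-q}b^q\phi^*(v)$ for every $b\ge1$, which is precisely $\phi^*\in\Delta^q$ (the constant $K^{1-q}\ge1$ as $K\le1$, $q>1$).

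For the converse, assume $\phi^*\in\Delta^q$, say $\phi^*(bv)\le Lb^q\phi^*(v)$ for $b\ge1$, where $L\ge1$. Running the identical conjugation procedure on $\phi^*$ and using $\phi^{**}=\phi$ produces $\phi(Lb^{q-1}u)\ge Lb^q\phi(u)$ for $b\ge1$, $u\ge0$. For a target $a\ge L$ I solve $Lb^{q-1}=a$, and since $q/(q-1)=p$ this gives $\phi(au)\ge L^{1-p}a^p\phi(u)$; the residual range $1\le a<L$ is dealt with by monotonicity alone, $\phi(au)\ge\phi(u)\ge a^pL^{-p}\phi(u)$. Combining, $\phi(au)\ge L^{-p}a^p\phi(u)$ for all $a\ge1$, so $\phi\in\Delta^{*p}$.

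The conceptual core — recognizing the exponent transformation forced by $1/p+1/q=1$ — is immediate, so the only genuine care is the bookkeeping of multiplicative constants and ensuring the rescaled parameter sweeps all of $[1,\infty)$. Here the two directions are mildly asymmetric: $K\le1$ makes the forward direction cover every $b\ge1$ automatically, whereas $L\ge1$ leaves the gap $1\le a<L$ in the converse, which is exactly why the monotonicity patch is inserted there. If one prefers to avoid this asymmetry entirely, I would instead invoke Propositions \ref{prop:delta*p} and \ref{prop:deltap} to replace $\phi$ (respectively $\phi^*$) by an equivalent convex function satisfying the clean bound with constant $1$; each direction then collapses to a single rescaling, and one closes using Lemma \ref{lem:equiv} together with the standard fact that equivalent Orlicz functions have equivalent conjugates.
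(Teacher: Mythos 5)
Your proof is correct, and its computational core is the same as the paper's: both arguments conjugate the scaling inequality and exploit the exponent arithmetic $b=a^{p-1}$, $b^q=a^{(p-1)q}=a^p$. Indeed, the paper's display $\phi^*(au)\ge\sup_w\{auw-a^{-p}\phi(aw)\}=a^{-p}\phi^*(a^pu)$ is exactly your identities $(\phi(a\,\cdot))^*(w)=\phi^*(w/a)$ and $(c\phi)^*(w)=c\,\phi^*(w/c)$ applied via order reversal. Where you differ is the bookkeeping: the paper first invokes Lemma~\ref{lem:equiv} and Proposition~\ref{prop:delta*p} to replace $\phi$ by an equivalent convex function satisfying $\phi(au)\ge a^p\phi(u)$ with constant $1$, so that no constants appear in the computation, and then dismisses the converse as ``similar''; you instead carry the constants $K$ and $L$ through the conjugation, which is what produces the constant $K^{1-q}$ in the forward direction and the residual range $1\le a<L$ in the converse, correctly patched by monotonicity. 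What your route buys is self-containedness (no regularization machinery) and, more importantly, a fully explicit converse: running the paper's argument ``in the similar fashion'' actually requires either biconjugation $\phi^{**}=\phi$ (as you use) or the fact that equivalent Orlicz functions have equivalent conjugates (needed if one regularizes $\phi^*$ instead), and the paper leaves this unsaid. One small point worth adding in your converse: to manipulate $\phi^*$ as a finite-valued function, note that $\phi^*$ is finite near zero (since $\phi$ is convex with $\phi(0)=0$, so $\phi(u)\ge cu$ for large $u$ and some $c>0$), and the assumed inequality $\phi^*(bv)\le Lb^q\phi^*(v)$ then propagates finiteness to all arguments; this is glossed over in the paper as well, which only records the analogous remark for the $\Delta_2$ case.
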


\begin{proof} Since in view of Lemma \ref{lem:equiv}, both conditions $\Delta^{*p}$ and $\Delta^q$
are preserved under equivalence relation,  assuming that $\phi$ satisfies $\Delta^{*p}$, by
Proposition \ref{prop:delta*p} we suppose that $\phi(au)\ge a^p\phi(u)$ for every
$a\ge 1, u\ge 0$. Hence
\[
 \phi^*(au)= \sup_{w>0}\{auw - \phi(w)\} \ge \sup_{w>0}\{auw - a^{-p} \phi(aw)\} = \sup_{w>0}\{ uw - a^{-p}\phi(w)\}=
  a^{-p}\phi^*(a^pu),
 \]
which  implies
that $\phi^*(a^{p}u)\le a^p \phi^*(au)$ or $\phi^*(a^{p - 1}u)\le a^p \phi^*(u)$ for every $a\ge 1 $ and
$u\ge 0$.  Letting $b = a^{p - 1}$, $b\ge 1$, we have $b^{q} = a^p$ and  
\[
\phi^*(bu)\le b^{q}\phi^*(u)
\]
 for every $b\ge 1$ and $u\ge
0$, and so $\phi^*$ satisfies condition $\Delta^{q}$.  The
converse implication can be shown in the similar fashion.

\end{proof}

\begin{proposition}\label{prop:delta2}

{\rm \bf{(I)} } Let $\phi$ be an Orlicz function $\phi$. The following conditions are equivalent.
\begin{itemize}
\item[{\rm(1)}] $\phi$
satisfies condition $\Delta_2$.
\item[{\rm(2)}] $\phi$ satisfies
condition $\Delta^q$ for some $0<q<\infty$.
\item[{\rm(3)}]   $\beta_\phi<\infty$.
\end{itemize}

{\rm \bf{(II)}} Let $\phi$ be a convex Orlicz function. The following conditions are equivalent.
\begin{itemize}
\item[{\rm(1')}] $\phi^*$
satisfies condition $\Delta_2$.
\item[{\rm(2')}] $\phi$ satisfies
condition $\Delta^{*p}$ for some $p>1$.
\item[{\rm(3')}]   $\alpha_\phi>1$.
\end{itemize}

\end{proposition}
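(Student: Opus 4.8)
The plan is to establish Part (I) first, then to derive Part (II) by combining Part (I) with the duality Proposition~\ref{prop:dual}, and to read off the equivalences involving the indices $\alpha_\phi,\beta_\phi$ directly from Remark~\ref{rem:delta}. Throughout I will argue in the non-atomic infinite case (``for all arguments''); the finite non-atomic and purely atomic versions follow from the same computations with the threshold $v$ inserted in the appropriate places.

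For Part (I), the equivalence (2) $\Leftrightarrow$ (3) is immediate from Remark~\ref{rem:delta}: since $\beta_\phi=\inf\{q:\phi\in\Delta^q\}$, the index is finite exactly when the set $\{q:\phi\in\Delta^q\}$ is nonempty. The implication (2) $\Rightarrow$ (1) is trivial, as setting $a=2$ in the defining inequality of $\Delta^q$ yields $\phi(2u)\le K2^q\phi(u)$. The only substantive step is (1) $\Rightarrow$ (2), and this is exactly the iteration already performed inside the proof of Proposition~\ref{prop:linfty}: given $\phi(2u)\le K\phi(u)$, for $a\ge 1$ choose $m$ with $2^{m-1}\le a<2^m$, iterate to obtain $\phi(au)\le\phi(2^mu)\le K^m\phi(u)$, and with $q=\ln K/\ln 2$ rewrite $K^m=(2^m)^q\le(2a)^q$ to conclude $\phi(au)\le 2^qa^q\phi(u)$, i.e. $\phi\in\Delta^q$.

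For Part (II), the equivalence (2') $\Leftrightarrow$ (3') again follows from Remark~\ref{rem:delta}, since $\alpha_\phi=\sup\{p:\phi\in\Delta^{*p}\}>1$ precisely when some admissible exponent $p>1$ occurs; here I use that the set of exponents is downward closed, so $\alpha_\phi>1$ lets me pick a genuine $p>1$ with $\phi\in\Delta^{*p}$. The core is (1') $\Leftrightarrow$ (2'), which I would get by feeding Part (I) into Proposition~\ref{prop:dual}. For (2') $\Rightarrow$ (1'): if $\phi\in\Delta^{*p}$ with $p>1$, then Proposition~\ref{prop:dual} gives $\phi^*\in\Delta^q$ with $1/p+1/q=1$ and $q\in(1,\infty)$, and the easy direction (2) $\Rightarrow$ (1) of Part (I) applied to $\phi^*$ yields $\phi^*\in\Delta_2$. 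For (1') $\Rightarrow$ (2'): from $\phi^*\in\Delta_2$, the hard direction (1) $\Rightarrow$ (2) of Part (I) gives $\phi^*\in\Delta^{q_0}$ for some $q_0$; because $\phi^*$ is convex with $\phi^*(0)=0$ one has $\phi^*(au)\ge a\phi^*(u)$ for $a\ge 1$, which forces $q_0\ge 1$, so by Remark~\ref{rem:delta} I may pass to some $q>1$ with $\phi^*\in\Delta^q$, and Proposition~\ref{prop:dual} then returns $\phi\in\Delta^{*p}$ with $p=q/(q-1)>1$.

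The main difficulty is bookkeeping rather than conceptual: I must keep the conjugate exponents strictly above $1$ when crossing Proposition~\ref{prop:dual}, and, in the small-argument version, contend with the fact that $\phi^*$ may a priori take the value $+\infty$. For the latter I would appeal to the observation recorded just before Proposition~\ref{prop:linfty}: once $\phi^*$ satisfies $\Delta_2$ it is, for all and large arguments, automatically finite-valued, while for small arguments it may be replaced by an equivalent finite-valued convex Orlicz function; Lemma~\ref{lem:equiv} then guarantees that $\Delta^q$ (and likewise $\Delta_2$) is unchanged by this replacement, so that Part (I) applies legitimately to $\phi^*$.
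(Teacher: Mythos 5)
Your proposal is correct and follows essentially the same route as the paper: Part (I) via the doubling-iteration argument with $q=\ln K/\ln 2$, the index equivalences via Remark~\ref{rem:delta}, and Part (II) by feeding Part (I) into Proposition~\ref{prop:dual} using $\phi=\phi^{**}$. Your extra bookkeeping (forcing the exponent strictly above $1$ before invoking duality, and replacing a possibly infinite-valued $\phi^*$ by an equivalent finite convex function in the small-argument case) only makes explicit points the paper passes over silently.
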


\begin{proof}

{\bf (I)}.  The equivalence of (2) and (3)   follows from Remark \ref{rem:delta}.

The implication from (2) to (1)  is
obvious.  

It is enough to show (1) implies (2).  We shall prove this only in the case when the
relations are satisfied  for large arguments.  The inequality
$\phi(2u)\le K\phi(u)$ is valid for all $u\ge v$, where $v\ge 0$
and $K>1$ are some constants.  
Then for any $a \ge 1$ there exists $m \in\mathbb{N}$ such that $2^{m-1} \le a < 2^m$. Setting $q = \ln{K}/ln{2}$, for every $u \ge v$ and any $a \ge 1$,
\[
\phi(au) \le \phi(2^mu) \le K^m\phi(u) = (2^m)^q\phi(u) \le 2^qa^q\phi(u),
\]
which shows that $\phi$ satisfies condition $\Delta^q$. 

{\bf (II)} 
From part (I), $\phi^*$ satisfies $\Delta_2$ if and only if $\phi^*$ satisfies $\Delta^q$ for some $1<q<\infty$. By the assumption that $\phi$ is convex $\phi = \phi^{**}$.  Thus by Proposition \ref{prop:dual}, $\phi^*\in\Delta^q$  is equivalent to 
  $\phi\in\Delta^{*p}$, where $1/p + 1/q =1$. Therefore (1') is equivalent to (2').

Finally (2') is equivalent to (3') by Remark \ref{rem:delta}.

\end{proof}

\section{Convexity, concavity, lower- and upper-estimates}

\begin{theorem}\label{th:concavity}
 Let $0<q<\infty$ and $\phi$ be an Orlicz function. Then the following properties are equivalent.
 \begin{itemize}
 \item[(i)]  The Orlicz space $L_\phi$ is $q$-concave.
 \item[(ii)] $L_\phi$ satisfies a lower $q$-estimate.
 \item[(iii)]  $\phi$ satisfies condition $\Delta^q$.
 \end{itemize}
\end{theorem}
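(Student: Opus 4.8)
The plan is to prove the three statements equivalent by establishing the cycle $(i)\Rightarrow(ii)\Rightarrow(iii)\Rightarrow(i)$. The implication $(i)\Rightarrow(ii)$ is immediate: a lower $q$-estimate is precisely the defining inequality of $q$-concavity restricted to disjointly supported vectors, so nothing is required beyond the definitions.

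For $(ii)\Rightarrow(iii)$ I would test the lower $q$-estimate on indicator functions. Recall (as computed in the proof of Theorem~\ref{th:quasinorm}) that $\|\chi_A\| = 1/\phi^{-1}(1/\mu(A))$ for any set $A$ of finite measure. Choosing $n$ pairwise disjoint sets $A_1,\dots,A_n$ of equal measure $t$, the functions $\chi_{A_i}$ are disjointly supported and $\bigl(\sum_i|\chi_{A_i}|^q\bigr)^{1/q}=\chi_{\bigcup_i A_i}$ with $\mu\bigl(\bigcup_i A_i\bigr)=nt$. Feeding this into the lower $q$-estimate gives
\[
\frac{n^{1/q}}{\phi^{-1}(1/t)} \le \frac{D_q}{\phi^{-1}(1/(nt))},
\]
which, writing $g=\phi^{-1}$ and $s=1/t$, rearranges to $g(ns)\ge D_q^{-1} n^{1/q} g(s)$ for every admissible $s$ and every $n\in\mathbb{N}$. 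The next step is to pass from integer $n$ to an arbitrary real $b\ge1$ by taking $n=\lfloor b\rfloor$ and using $n\ge b/2$, obtaining $g(bs)\ge c\,b^{1/q}g(s)$ with $c=(2^{1/q}D_q)^{-1}$. Finally, substituting $u=g(s)$ and applying the increasing function $\phi$, then setting $a=cb^{1/q}$, this becomes $\phi(au)\le c^{-q}a^q\phi(u)$ for all $a\ge1$, which is exactly $\phi\in\Delta^q$. I expect this implication to be the main obstacle: one must track the admissible range of the arguments $u$ — all of $(0,\infty)$, large, or small — according to whether the measure is non-atomic infinite, non-atomic finite, or counting, and correspondingly recover $\Delta^q$ for all, large, or small arguments; the integer-to-real upgrade and the constant (one may assume $D_q\ge 1$, so that $c\le 1$ and $a\ge 1$ is covered) also need a little care.

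For $(iii)\Rightarrow(i)$ I would first regularize. By Proposition~\ref{prop:deltap} there is an Orlicz function $\psi$ equivalent to $\phi$ with $\psi(au)\le a^q\psi(u)$ for all $a\ge1$, equivalently such that $N(t):=\psi(t^{1/q})$ is concave with $N(0)=0$; since equivalent Orlicz functions generate the same space with equivalent quasi-norms, it suffices to prove $q$-concavity for $\psi$, the eventual constant $D_q$ absorbing the equivalence constants. Because $\phi\in\Delta^q$ forces $\phi\in\Delta_2$ by Proposition~\ref{prop:delta2}, modular and quasi-norm are compatible: $\|h\|\le1\Leftrightarrow I_\psi(h)\le1$, and $\|y\|=1\Rightarrow I_\psi(y)=1$. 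Now given $x_1,\dots,x_n$ with $c_i=\|x_i\|>0$, set $y_i=x_i/c_i$ and normalize by homogeneity so that $\sum_i c_i^q=1$; writing $g=\bigl(\sum_i|x_i|^q\bigr)^{1/q}$ one has $g^q=\sum_i c_i^q|y_i|^q$ pointwise. Applying Jensen's inequality to the concave $N$ with weights $c_i^q$ summing to one gives, pointwise,
\[
\psi(g)=N\Bigl(\sum_i c_i^q|y_i|^q\Bigr)\ge \sum_i c_i^q\,N(|y_i|^q)=\sum_i c_i^q\,\psi(|y_i|),
\]
and integrating yields $I_\psi(g)\ge\sum_i c_i^q\, I_\psi(y_i)=\sum_i c_i^q=1$, whence $\|g\|\ge1=\bigl(\sum_i\|x_i\|^q\bigr)^{1/q}$. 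Undoing the normalization produces the inequality defining $q$-concavity with constant one for $\psi$, and the $q$-concavity of $L_\phi$ follows from the equivalence of the quasi-norms. The vectors $x_i=0$ are discarded at the outset and the all-zero case is trivial.
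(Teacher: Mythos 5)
Your cycle $(i)\Rightarrow(ii)\Rightarrow(iii)\Rightarrow(i)$ is the paper's own architecture, and in the two non-atomic cases your argument is correct and essentially identical to the paper's. For $(ii)\Rightarrow(iii)$ the paper also tests the lower $q$-estimate on disjoint equal-measure indicators (it parametrizes by two points $u>w\ge 1$ and takes sets of measure $1/\phi(u)$, whereas you work with the inverse function $g=\phi^{-1}$ and convert at the end; this is the same computation, and your observation that the total measure used is $1/\sigma\le\mu(\Omega)$ precisely when the small argument satisfies $\sigma\ge 1$ is what makes the constraint decouple, exactly as in the paper). For $(iii)\Rightarrow(i)$ the paper regularizes by Proposition~\ref{prop:deltap} just as you do, but then quotes the identity $\|f\|_{\phi(u^{1/q})}=\|\,|f|^{1/q}\|_\phi^q$ together with the reverse triangle inequality for Luxemburg functionals of concave functions; your Jensen-plus-modular computation is in effect a self-contained proof of that reverse triangle inequality. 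One remark: your route uses $\Delta_2$ to get $I_\psi(y_i)=1$ on the unit sphere, which is available here (via Proposition~\ref{prop:delta2}), but it can be avoided — taking $\alpha_i<\|x_i\|$ forces $I_\psi(x_i/\alpha_i)>1$ by the definition of the Minkowski functional, and the same superadditivity argument then gives the conclusion without any norm-modular identity.

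The one place where your proposal has a real gap, rather than just deferred bookkeeping, is the counting-measure case of $(ii)\Rightarrow(iii)$. There your construction "choose $n$ pairwise disjoint sets of equal measure $t$" is simply unavailable for non-integer $t$: only integer measures exist, so you can realize $g(ns)\ge D_q^{-1}n^{1/q}g(s)$ only at points $s=1/m$, $m\in\mathbb{N}$, and to pass from this lattice of points to the full condition $\Delta^q$ for small arguments you must round $1/\phi(u)$ to an integer $m$ and control the distortion $\phi^{-1}(1/(m+1))/\phi^{-1}(1/m)$ uniformly in $m$. This is not automatic for a general Orlicz function: the paper does it by invoking the standing hypothesis $\alpha_\phi>0$ through Proposition~\ref{prop:delta*p}, which lets one assume $\phi(u^{1/p})$ is convex for some $p>0$, so that $\phi^{-1}(u^p)/u$ is decreasing and hence $\phi^{-1}(1/(m+1))/\phi^{-1}(1/m)\ge\bigl(m/(m+1)\bigr)^{1/p}$. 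So the discrete case needs an extra ingredient (the global assumption $\alpha_\phi>0$), not merely "a little care" with ranges and constants; your proof is complete only after this step is added.
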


\begin{proof} 
Since for disjointly supported functions $f_1,\dots,f_n \in L_\phi$,
\[
\left\|\sum_{i=1}^n f_i\right\|= \left\|\left(\sum_{i=1}^n |f_i|^q\right)^{\frac{1}{q}}\right\|,
\]
clearly (i) implies (ii). 

Assume (ii) and show (iii).  Let $L_\phi$  satisfy a lower $q$-estimate.

Let us
prove it first when the measure $\mu$  is non-atomic and finite.
Without loss of generality assume that $\phi(1) = 1$ and $\mu(\Omega) = 1$. For arbitrary $u>w\ge 1$, set 
\[
x =\frac1{\phi(u)} \ \ \ \text{and} \ \ \  y = \frac1{\phi(w)}
\]
 and $n = [\frac{y}{x}]$ is  the entire part of $\frac{y}{x}$.  Clearly $0<x<y\le 1$ and $n\le \frac{y}{x} \le
2n$.  Since $nx\le y \le 1$, we are able to choose $n$ disjoint
measurable sets $A_i$ with $\mu A_i = x$ for every $i = 1,\dots,
n$. Now, setting
\[
f_i = \phi^{-1} (1/x) \chi_{A_i} \ \ \  i = 1,\dots n, 
\]
where $\phi^{-1}$ is an inverse function of $\phi$ on $(0,\infty)$,
we obtain that $\| f_i \| = 1$ and
\[
\left\|\sum_{i=1}^n f_i\right\| = \phi^{-1} (1/x) \| \chi_{\bigcup_{i=1}^n
A_i} \|  = \frac {\phi^{-1}(1/x)}{\phi^{-1}(1/(nx))}.
\]

If $D_q$ is a lower $q$-estimate constant then in view of $1/y \le 1/(nx)$,
\[
\frac{u}{w} =\frac {\phi^{-1}(1/x)}{\phi^{-1}(1/y)} \ge \frac
{\phi^{-1}(1/x)}{\phi^{-1}(1/(nx))} = \left\| \sum_{i=1}^n f_i
\right\| \ge D_q \left(\sum_{i=1}^n \|f_i\|^q\right)^{\frac1q}  = D_q n^{\frac1q} \ge k
\left(\frac{y}{x}\right)^{\frac1q} = k \left(\frac{\phi(u)}{\phi(w)}\right)^{\frac1q},
\]
with $k = D_q 2^{-\frac1q}$.  This implies that for every $u>w\ge
1$, 
\[
\phi(u) \le k^{-q} ({u}/{w})^q \phi(w),
\]
and  thus setting $s=w$ and $as=u$ where $a\ge 1$ and $s\ge 1$, 
\[
\phi(as)\le k^{-q} a^q \phi(s),
\]
which means $\Delta^q$ condition of $\phi$ for large arguments. 

When the measure is non-atomic and infinite, then the proof is similar and in fact  simpler.

Let's  sketch the proof in the case of  discrete measure.  Assume also that $\phi(1) = 1$. For any
$0<w<u\le 1$, let $x = \frac1{\phi(u)}$ and $y = \frac1{\phi(w)}$
and $n = [\frac{y}{x}]$.  There exists $m\in \Bbb N$ with $m\le x
< m+1$. Choose $n$ disjoint subsets $N_i$ of $\Bbb N$ with $\card{N_i} = m$ and define
\[
x_{i}(j) =
\begin{cases}
\phi^{-1}(1/x),&\text{for $j\in N_{i}$} \\ 0,
&\text{for  $ j\notin N_{i}$},
\end{cases}
\]
for $i=1,\dots, n$. Then the vectors $x_i\in l_\phi$ have disjoint supports and
$\|x_i\| = \frac{\phi^{-1}(1/x)}{\phi^{-1}(1/m)}$.

By the general assumption $\alpha_\phi > 0$, in view of Proposition \ref{prop:delta*p}    there exists $p>0$ and equivalent Orlicz function $\psi$ to $\phi$ such that $\psi(u^{1/p})$ is convex. Without loss of generality we can assume that $\phi(u^{1/p})$ is convex. It follows that the function ${\phi^{-1}(u^p)}/{u}$ is decreasing. Consequently,
$$
\|x_i\| = \frac{\phi^{-1}(1/x)}{\phi^{-1}(1/m)} \ge
\frac{\phi^{-1}(1/(m+1))}{\phi^{-1}(1/m)} \ge
\left(\frac{m}{m+1}\right)^{1/p}.
$$
By  
\[
\frac{1}{mn} \ge \frac{1}{y}, \ \ \ \ n^{\frac{1}{q}}\ge \left(\frac{1}{2}\right)^{{1/q}}\left(\frac{y}{x}\right)^{1/q},
\]
 and by the lower $q$-estimate of $l_\phi$,  
\begin{align*}
\frac{u}{w} &= \frac{\phi^{-1}(1/x)}{\phi^{-1}(1/y)} \ge \frac{\phi^{-1}(1/x)}{\phi^{-1}(1/(nm))} = \left\|\sum_{i=1}^n x_i\right\| \ge D_q\left(\sum_{i=1}^n \left(\frac{m}{m+1}\right)^{\frac{q}{p}} \right)^{\frac1q}\\
&=  D_q n^{\frac{1}{q}}
\left(\frac{m}{m+1}\right)^{\frac{1}{p}} 
\ge D_q \left(\frac{1}{2}\right)^{\frac1q+\frac1p }  \left (\frac{y}{x}\right)^{1/q} = k \left(\frac{\phi(u)}{\phi(w)}\right)^{\frac1q},
\end{align*}
for every $0 < w < u \le 1$,  where $D_q$  is the constant of lower $q$-estimate and $k = D_q (1/2)^{\frac1q +\frac1p}$ a constant not depending on $x$ and $y$. Similarly as in the first case it implies condition $\Delta^q$ for small arguments.

Below an alternative proof is provided in the case of finite non-atomic measure. Observe  at first that
$\phi$ must satisfy condition $\Delta_2$.  Indeed,  in the opposite
situation in view of Proposition \ref{prop:linfty}, $l_\infty$ is an order isometric copy  in $L_\phi$, and
so $L_\phi$ can not satisfy any lower $q$-estimate. Note that it is easy to show that $l_\infty$ does not have any lower $q$-estimate for any $q<\infty$.  

Assume now that $\phi$ does not satisfy condition $\Delta^q$ for large arguments.  Assume without loss of generality  that
$\phi(1) = 1$. Since $\phi$ does not satisfy $\Delta^q$ for large arguments,  there
exist infinite sequences $\{a_n\}$ and $\{u_n\}$, such that $1\le
u_n \rightarrow \infty$ as $n\rightarrow \infty$, $a_n\ge 1$ and
\begin{equation}\label{eq:1222}
\phi(a_n^{1/q}u_n) \ge 2^n\, a_n\,\phi(u_n),
\end{equation}
for every $n\in\Bbb N$. The sequence $\{a_n\}$ is not bounded.
Indeed,  if for some $M$, $a_n\le M$, then  by $\Delta_2$ condition,
\[
2^na_n\phi(u_n)\le \phi(a_n^{1/q}u_n) \le \phi(M^{1/q}u_n) \le K
\phi(u_n)
\]
for all $n\in\mathbb{N}$ and some $K>0$, which is a contradiction. Therefore we assume that
$\sum_{n=1}^\infty \frac1{a_n} < \infty$, extracting a subsequence
if necessary.  Denoting by $[a_n]$ the entire part of $a_n$ with
$[a_0] = 0$, let
$$
N_n = \left\{ 1 + \sum_{i=0}^{n-1} [a_i],  2 + \sum_{i=0}^{n-1} [a_i],
\dots, [a_n] + \sum_{i=0}^{n-1} [a_i] \right\},
$$
for every $n\in \Bbb N$.  The sequence $\{N_n\}$ is a disjoint
partition of the natural numbers $\Bbb N$, and each $N_n$ contains
exactly $[a_n]$ consecutive natural numbers.  There will be no loss
of generality if we suppose that $\mu (\Omega) = 1$. Observe that by (\ref{eq:1222}),
\[
\sum_{n=1}^\infty \frac{[a_n]}{\phi(a_n^{1/q}u_n)} \le
\sum_{n=1}^\infty \frac1{2^n} = 1.
\] 
Therefore for every $n\in\Bbb
N$ there exists a finite sequence of sets $\{A_{in}\}_{i\in N_n}$
such that all $A_{in}$ are disjoint and
$$
\mu (A_{in}) = \frac1{\phi(a_n^{1/q}u_n)}
$$
for each $i\in N_n$, $n\in \Bbb N$.  Define
$$
g_{in} = u_n\chi_{A_{in}}
$$
for $i\in N_n$ and $n\in \Bbb N$.  For every $n$, $\{g_{in}\}_{i\in
N_n}$ is a finite sequence of functions with the same distributions
and such that  
\[
I_\phi (a_n^{1/q}g_{in}) = \phi (a_n^{1/q}u_n) \mu
(A_{in}) = 1.
\]
 Hence $\| g_{in} \| = 1/a_n^{1/q}$ for every $i\in
N_n , n\in\Bbb N$. It then implies that for sufficiently large $n\in
\Bbb N$,
\begin{equation}\label{eq:1333}
\sum_{i\in N_n} \|g_{in}\|^q = \sum_{i\in N_n} \frac1{a_n} = \frac
{[a_n]}{a_n} \ge \frac12.
\end{equation}
On the other hand since $\phi$ satisfies condition $\Delta_2$ for
large arguments, for every $\lambda > 1$ there exists a constant
$K$ such that $\phi(\lambda u) \le K\phi (u)$ for all $u\ge 1$.
Therefore by (\ref{eq:1222}),
\begin{equation}\label{eq:1444}
I_\phi\left( \lambda \sum_{i\in N_n} g_{in}\right) =
[a_n]\phi(\lambda u_n)\mu(A_{in}) = \frac{[a_n]\phi(\lambda u_n)}{ \phi (a_n^{1/q}u_n)}  \le \frac{[a_n] \phi (\lambda
u_n)}{2^n a_n \phi(u_n)}\le  \frac{K}{2^n},
\end{equation}
where $K$ depends only on $\lambda$. Hence for all $\lambda > 0$,
$I_\phi (\lambda \sum_{i\in N_n} g_{in}) \rightarrow 0$, which
means that $\|\sum_{i\in N_n} g_{in} \| \rightarrow 0$ as
$n\rightarrow \infty$. Thus in view of (\ref{eq:1333}), $L_\phi$ cannot satisfy any lower $q$-estimate.

We shall show now the implication from (iii) to (i). Let $\phi$ satisfy condition $\Delta^q$.  In view of Proposition \ref{prop:deltap} there exists a Young function
$\psi$ equivalent to $\phi$ such that $\psi(u^{1/q})$ is concave.
Since the norms determined by both functions $\psi$ and $\phi$ are
equivalent, we assume not losing generality that $\phi(u^{1/q})$ is
concave. By direct calculations we get

\[
\|f\|_{\phi(u^{1/q})} = \| |f|^{1/q} \|_\phi^q.
\]
 Notice also
that the Luxemburg functional defined by means of  a concave
function (e.g. $\|\cdot  \|_{\phi(u^{1/q})}$) satisfies the reverse  triangle
inequality.  Thus
$$
\left\|\left(\sum_{i=1}^n |f_i|^q\right)^{1/q} \right\|_\phi =\left\| \sum_{i=1}^n |f_i|^q
\right\|_{\phi(u^{1/q})}^{1/q}\ge \left(\sum_{i=1}^n \| \,|f_i|^q
\|_{\phi(u^{1/q})}\right)^{1/q}=\left(\sum_{i=1}^n \|f_i\|_\phi^q\right)^{1/q},
$$
which shows that $L_\phi$ is $q$-concave.
\end{proof}

\begin{theorem}\label{th:convexity}  Let $0<p<\infty$ and $\phi$ be an Orlicz function. Then the following conditions are equivalent.
\begin{itemize}
\item[(i)] The Orlicz space $L_\phi$ is $p$-convex.
\item[(ii)] $L_\phi$ satisfies an upper $p$-estimate. 
\item[(iii)]  $\phi$ satisfies condition $\Delta^{*p}$.
\end{itemize}
\end{theorem}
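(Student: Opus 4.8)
The plan is to prove the cycle (i)$\Rightarrow$(ii)$\Rightarrow$(iii)$\Rightarrow$(i), running the argument of \thmref{th:concavity} ``in reverse'': $p$-convexity, the upper $p$-estimate and $\Delta^{*p}$ are the formal mirrors of $q$-concavity, the lower $q$-estimate and $\Delta^q$, so each step is obtained from its concavity counterpart by reversing the relevant inequalities. Because $L_\phi$ need not be a Banach space (and for $p\le 1$ the function $\phi$ need not be convex), I cannot simply dualize via \propref{prop:dual}; the three implications must be proved directly.

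The implication (i)$\Rightarrow$(ii) is immediate: for disjointly supported $f_1,\dots,f_n\in L_\phi$ one has $(\sum_{i=1}^n|f_i|^p)^{1/p}=|\sum_{i=1}^n f_i|$ pointwise, so $\|(\sum_{i=1}^n|f_i|^p)^{1/p}\|=\|\sum_{i=1}^n f_i\|$, and the $p$-convexity inequality restricted to disjoint vectors is exactly the upper $p$-estimate.

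For (ii)$\Rightarrow$(iii), which I expect to be the main obstacle, I would build a disjoint family of dilated characteristic functions, as in the lower-estimate argument. Assuming the finite non-atomic case with $\phi(1)=1$ and $\mu(\Omega)=1$, for a positive integer $n$ and $s>0$ with $ns\le 1$ I choose disjoint sets $A_1,\dots,A_n$ of measure $s$ and put $f_i=\phi^{-1}(1/s)\chi_{A_i}$, so that $\|f_i\|=1$ while $\|\sum_{i=1}^n f_i\|=\phi^{-1}(1/s)/\phi^{-1}(1/(ns))$ by the formula $\|\chi_A\|=1/\phi^{-1}(1/\mu(A))$. The upper $p$-estimate then gives $\phi^{-1}(1/s)\le C_p n^{1/p}\phi^{-1}(1/(ns))$; writing $t=1/(ns)$ this reads $\phi^{-1}(nt)\le C_p n^{1/p}\phi^{-1}(t)$ for $t\ge 1$, and replacing $n$ by $\lceil a\rceil\le 2a$ upgrades it to $\phi^{-1}(at)\le C a^{1/p}\phi^{-1}(t)$ for all $a\ge 1$. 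Finally I convert this estimate on $\phi^{-1}$ into $\Delta^{*p}$ for $\phi$: substituting $t=\phi(u)$ and setting $b=\phi^{-1}(a\phi(u))/u\ge 1$ yields $\phi(bu)\ge C^{-p}b^p\phi(u)$, i.e.\ condition $\Delta^{*p}$ for large arguments. The infinite non-atomic case is identical with no restriction on $s$, while the discrete case requires the usual integer-cardinality correction (choosing $|N_i|=m$ with $m\le 1/\phi(u)<m+1$) together with the regularization of \propref{prop:delta*p} to control the ratio $\phi^{-1}(1/m)/\phi^{-1}(1/(m+1))$, exactly as in the discrete part of \thmref{th:concavity}. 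Keeping track of these entire-part approximations across the three measure types, and passing cleanly from the estimate for $\phi^{-1}$ to $\Delta^{*p}$, is the delicate point.

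For (iii)$\Rightarrow$(i) I would first invoke \propref{prop:delta*p} to replace $\phi$ by an equivalent Orlicz function with $\phi(u^{1/p})$ convex, which changes the quasi-norm only up to a constant. Writing $\Phi(u)=\phi(u^{1/p})$, a change of variables gives the identity $\|f\|_\Phi=\||f|^{1/p}\|_\phi^p$, and since $\Phi$ is convex, $\|\cdot\|_\Phi$ is an honest (Luxemburg) norm and hence subadditive. Applying subadditivity to $\sum_{i=1}^n|f_i|^p$ and using the identity twice then yields
\[
\left\|\left(\sum_{i=1}^n|f_i|^p\right)^{1/p}\right\|_\phi=\left\|\sum_{i=1}^n|f_i|^p\right\|_\Phi^{1/p}\le\left(\sum_{i=1}^n\bigl\||f_i|^p\bigr\|_\Phi\right)^{1/p}=\left(\sum_{i=1}^n\|f_i\|_\phi^p\right)^{1/p},
\]
which is $p$-convexity (with constant absorbing the equivalence of quasi-norms). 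This mirrors the concavity argument, where concavity of $\phi(u^{1/q})$ supplied the reverse triangle inequality instead.
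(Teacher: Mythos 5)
Your proposal is correct and follows essentially the same route as the paper: the same cycle (i)$\Rightarrow$(ii)$\Rightarrow$(iii)$\Rightarrow$(i), the same disjoint-characteristic-function construction for (ii)$\Rightarrow$(iii), and the same use of \propref{prop:delta*p} together with convexity of $\phi(u^{1/p})$ for (iii)$\Rightarrow$(i) --- the paper packages this last step as the $p$-convexification $L_\psi=(L_{\psi(u^{1/p})})^{(p)}$, which is exactly your subadditivity computation for the Luxemburg norm $\|\cdot\|_\Phi$. The only notable (and harmless) difference is a minor streamlining in (ii)$\Rightarrow$(iii): by proving the estimate for integer multiples first and then rounding a general $a\ge 1$ up via $\lceil a\rceil\le 2a$ and monotonicity of $\phi^{-1}$, you bypass the paper's auxiliary step that $\alpha_\phi>0$ forces $\phi^{-1}$ to satisfy $\Delta_2$.
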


\begin{proof}
It is clear that (i) yields (ii).

(ii) $\to$ (iii)
Let now  $L_\phi$ satisfy an upper $p$-estimate. We will show (iii) that is $\phi\in\Delta^{*p}$. We are giving the proof only in the case of finite and non-atomic measure. Assume without loss of generality that $\phi(1) = 1$ and $\mu(\Omega) = 1$. For the sake of convenience we will repeat several steps from the proof of the implication (ii) to (iii) of Theorem \ref{th:concavity}.
 
 For arbitrary $u>w\ge 1$, set 
\[
x =\frac1{\phi(u)} \ \ \ \text{and} \ \ \  y = \frac1{\phi(w)}
\]
 and $n = [\frac{y}{x}]$ is  the entire part of $\frac{y}{x}$.  Clearly $0<x<y\le 1$ and $n\le \frac{y}{x} \le
2n$.  Since $nx\le y \le 1$, we are able to choose $n$ disjoint
measurable sets $A_i$ with $\mu A_i = x$ for every $i = 1,\dots,
n$. Now, setting
\[
f_i = \phi^{-1} (1/x) \chi_{A_i} \ \ \  i = 1,\dots n, 
\]
where $\phi^{-1}$ is the inverse function on $(0,\infty)$ of $\phi$,
we obtain that $\| f_i \| = 1$, $f_i\wedge f_j =0$ for $i\ne j$, and
\[
\left\|\sum_{i=1}^n f_i\right\| = \phi^{-1} (1/x) \| \chi_{\bigcup_{i=1}^n
A_i} \|  = \frac {\phi^{-1}(1/x)}{\phi^{-1}(1/(nx))}.
\]

The general assumption $\alpha_\phi >0$ implies that $\phi^{-1}$ satisfies condition $\Delta_2$. Indeed there exist a constant $1>C>0$, $r>0$ such that $\phi(at) \ge Ca^r\phi(t)$ for all $a\ge 1$ and $t\ge 0$. Hence for all $s =\phi(t)\ge 0$ we have $a\phi^{-1}(s) \ge \phi^{-1}(Ca^r s)$. Setting $K = a=(2/C)^r$, we get 
 \[
 \phi^{-1}(2 s) \le (2/C)^r \phi^{-1}(s) = K \phi^{-1}(s).
 \] 
If $C_p$ is an upper $p$-estimate constant then in view of $1/y\ge 1/(2nx)$ and the above inequality,
\begin{align*}
\frac{u}{w} &=\frac {\phi^{-1}(1/x)}{\phi^{-1}(1/y)} \le \frac
{\phi^{-1}(1/x)}{\phi^{-1}(1/(2nx))} \le K  \frac
{\phi^{-1}(1/x)}{\phi^{-1}(1/(nx))} \\
& = K\left\| \sum_{i=1}^n f_i
\right\| \le C_p \left(\sum_{i=1}^n \|f_i\|^p\right)^{\frac1p}  =  KC_p n^{\frac1p} \le KC_p
\left(\frac{y}{x}\right)^{\frac1p} = KC_p \left(\frac{\phi(u)}{\phi(w)}\right)^{\frac1p}.
\end{align*}
 This implies that for every $u>w\ge
1$, 
\[
\phi(u) \ge (KC_p)^{-p} ({u}/{w})^p \phi(w),
\]
and  thus setting $s=w$ and $as=u$ where $a\ge 1$ and $s\ge 1$, 
\[
\phi(as)\ge (KC_p)^{-p} a^p\phi(s),
\]
which means $\Delta^{*p}$ condition of $\phi$ for large arguments. 

{\it Proof of \rm(iii) $\to$ \rm(i)} {\it if $\phi$ is convex.}

In this case $L_\phi$ is a Banach space and we can use general duality between convexity and concavity. 
In view of Proposition \ref{prop:dual},  $\phi^*$ satisfies condition $\Delta^{p'}$. Thus by Theorem \ref{th:concavity},
$L_{\phi^*}$ must be $p'$-concave.  Moreover $\phi^*\in \Delta^{p'}$ implies that $\phi^*\in\Delta_2$ in view of Proposition \ref{prop:delta2}. Therefore $L_{\phi^*}= E_{\phi^*}$, and so $(L_{\phi^*})^* \simeq L_\phi$. Consequently, in view of Theorem \ref{th:LT1f18}, 
 $ L_\phi$ is $p$-convex.

{\it Proof of \rm(iii) $\to$ \rm(i)} {\it for arbitrary Orlicz function $\phi$.}

 We can give a proof analogous to the last part of the proof in  Theorem \ref{th:concavity} replacing concavity by convexity. However we can treat this more generally. Recall, given $0< p < \infty$ and a quasi-Banach lattice $(X, \|\cdot\|_X)$,  the $p$-convexification of $X$ is the space  $X^{(p)} = \{x\in X : |x|^p \in X\}$ equipped with  the quasi-norm $\|x\|_{X^{(p)}} = \|\,|x|^p\,\|_X^{1/p}$. It is straightforward to show that  $X^{(p)}$  is $p$-convex \cite[page 53]{LT2}. 

Now if $\phi$ satisfies condition $\Delta^{*p}$, then in
view of Proposition \ref{prop:delta*p}, $\phi$ is equivalent to an Orlicz function
$\psi$ such that $\psi(u^{1/p})$ is convex. Thus $L_{\psi(
u^{1/p})}$ is a Banach space. Moreover, $L_{\psi}= (L_{\psi(u^{1/p})})^{(p)}$ is a $p$-convexification of $L_{\psi(u^{1/p})}$. 
 Hence $L_\psi$ and so $L_{\phi}$ is $p$-convex.

\end{proof}

\section{type and cotype}

\begin{theorem}  \label{th:cotype}
Let $q\ge 2$  and $\phi$ be an Orlicz function. Then the Orlicz space $L_\phi$ has
cotype $q$  if and only if $\phi$ satisfies condition $\Delta^q$.
\end{theorem}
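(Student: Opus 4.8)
The plan is to deduce both implications from results already in hand, chiefly Theorem \ref{th:concavity} (which identifies $\phi\in\Delta^q$ with $q$-concavity and with the lower $q$-estimate of $L_\phi$) and the Khintchine-type inequalities of Theorem \ref{th:Khinchine}. The guiding idea is that the Rademacher average appearing in the definition of cotype can be controlled from below by the lattice expression $\|(\sum_i|x_i|^2)^{1/2}\|$ via $p$-convexity, and that for disjoint elements the Rademacher average degenerates entirely, which is what links cotype to the purely lattice-theoretic lower $q$-estimate.

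For the implication that cotype $q$ forces $\phi\in\Delta^q$, I would exploit disjointness. Given disjointly supported $x_1,\dots,x_n\in L_\phi$ and any signs $\theta_i=\pm1$, disjointness of the supports gives $|\sum_i\theta_i x_i|=\sum_i|x_i|=|\sum_i x_i|$ pointwise, so $\|\sum_i\theta_i x_i\|=\|\sum_i x_i\|$ because the quasi-norm depends only on the modulus. Hence $\int_0^1\|\sum_i r_i(t)x_i\|\,dt=\|\sum_i x_i\|$, and the cotype-$q$ inequality collapses to $(\sum_i\|x_i\|^q)^{1/q}\le K\|\sum_i x_i\|$. Since for disjoint elements $\|\sum_i x_i\|=\|(\sum_i|x_i|^q)^{1/q}\|$, this is exactly a lower $q$-estimate, and Theorem \ref{th:concavity} then yields $\phi\in\Delta^q$.

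For the converse, suppose $\phi\in\Delta^q$, so $L_\phi$ is $q$-concave by Theorem \ref{th:concavity}. The standing hypothesis $\alpha_\phi>0$ together with Remark \ref{rem:delta} produces some $p>0$ with $\phi\in\Delta^{*p}$, whence $L_\phi$ is $p$-convex by Theorem \ref{th:convexity}. The $p$-convexity half of Theorem \ref{th:Khinchine} then supplies a constant $C>0$ with $\int_0^1\|\sum_i r_i(t)x_i\|\,dt\ge C\|(\sum_i|x_i|^2)^{1/2}\|$. Finally, since $q\ge2$ one has the pointwise inequality $(\sum_i|x_i|^q)^{1/q}\le(\sum_i|x_i|^2)^{1/2}$, and combining it with $q$-concavity and monotonicity of the lattice quasi-norm gives
\[
\left(\sum_i\|x_i\|^q\right)^{1/q}\le D_q\left\|\left(\sum_i|x_i|^q\right)^{1/q}\right\|\le D_q\left\|\left(\sum_i|x_i|^2\right)^{1/2}\right\|\le \frac{D_q}{C}\int_0^1\left\|\sum_i r_i(t)x_i\right\|dt,
\]
which is precisely cotype $q$. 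The only delicate points are structural rather than computational: the converse genuinely needs the standing assumption $\alpha_\phi>0$ to manufacture $p$-convexity (and thereby the lower Khintchine bound), and the hypothesis $q\ge2$ is exactly what makes the elementary inclusion $\ell_q\subseteq\ell_2$ available; the forward direction, by contrast, is essentially free once one observes that disjoint supports kill the Rademacher signs.
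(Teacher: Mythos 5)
Your proposal is correct, and its sufficiency half ($\Delta^q\Rightarrow$ cotype $q$) is exactly the paper's argument: $q$-concavity from Theorem \ref{th:concavity}, $p$-convexity for some $p>0$ from $\alpha_\phi>0$ via Remark \ref{rem:delta} and Theorem \ref{th:convexity}, then the lower Khintchine bound of Theorem \ref{th:Khinchine} combined with the pointwise inequality $\left(\sum_i|x_i|^q\right)^{1/q}\le\left(\sum_i|x_i|^2\right)^{1/2}$ valid for $q\ge 2$. Your necessity half, however, takes a genuinely cleaner route than the paper. The paper first shows $\phi\in\Delta_2$ (otherwise $L_\phi$ contains an order isometric copy of $\ell_\infty$ by Proposition \ref{prop:linfty}, which has no finite cotype), and then, assuming $\Delta^q$ fails, reruns the explicit construction of the disjoint functions $g_{in}$ from the proof of Theorem \ref{th:concavity}, deriving a contradiction from the inequalities (\ref{eq:1333}) and (\ref{eq:1444}). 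You instead observe that for disjointly supported $x_1,\dots,x_n$ the signs are irrelevant, $\left|\sum_i\theta_ix_i\right|=\left|\sum_ix_i\right|$ pointwise, so the Rademacher average collapses to $\left\|\sum_ix_i\right\|=\left\|\left(\sum_i|x_i|^q\right)^{1/q}\right\|$; thus cotype $q$ restricted to disjoint vectors \emph{is} the lower $q$-estimate, and Theorem \ref{th:concavity} (whose implication from lower $q$-estimate to $\Delta^q$ is proved directly in the paper, without needing $\Delta_2$) finishes the proof. This reduction is legitimate and more economical: it makes the logical structure transparent, reuses Theorem \ref{th:concavity} wholesale, and dispenses with both the $\ell_\infty$ step and the repeated construction; it also renders the paper's separate treatment of convex $\phi$ via the Lindenstrauss--Tzafriri diagrams unnecessary. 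Note that the paper implicitly uses your key observation anyway, in the step $\int_0^1\left\|\sum_{i\in N_n}r_i(t)g_{in}\right\|dt=\left\|\sum_{i\in N_n}g_{in}\right\|$, but never abstracts it into the clean statement that cotype implies a lower estimate; what its longer argument buys is only an explicit quantitative witness to the failure of cotype when $\Delta^q$ fails.
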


\begin{proof} Under the assumption of convexity on $\phi$, the space $L_\phi$ is a Banach lattice. Therefore we can use general facts from Banach spaces and lattices. The summary of the relations among several local geometric properties is given in two diagrams in the monograph \cite{LT2}.
By them, for $q>2$,  lower $q$-estimate is equivalent to cotype $q$. Moreover, $2$-concavity and cotype $2$ are also equivalent. 
Thus   in view of  Theorem \ref{th:concavity} the proof is done.

Assume now that $\phi$ is an arbitrary  Orlicz function, not necessarily convex,  and that $L_\phi$ has cotype $q<\infty$. We will show that $\phi$ satisfies condition $\Delta^q$. 

Observe  at first that
$\phi$ must satisfy condition $\Delta_2$.  Indeed,  in the opposite
case in view of Proposition \ref{prop:linfty}, $L_\phi$ contains an isomorphic copy of $l_\infty$. The space $l_\infty$ does not have any finite cotype. In fact let $e_i =(0,\dots,0,1,0,\dots)$, $i\in\mathbb{N}$, with $1$ on the $i$-th place  be unit vectors in $l_\infty$. Then 
\begin{equation}\label{eq:112}
\int_0^1\left\|\sum_{i=1}^n r_i(t)e_i\right\|_\infty\, dt = \left\|\sum_{i=1}^n e_i\right\|_\infty = 1 \ \ \ \ \text{and} \ \ \ \sum_{i=1}^n \|e_i\|_\infty^q = n,
\end{equation}
for any $n\in\mathbb{N}$ and $q>0$, which implies that $l_\infty$ does not have a finite cotype. 
It follows that $L_\phi$ does not have it either, which contradicts the assumption. 

We provide a further proof only in the case of finite non-atomic measure.  Assume that $\phi$ does not satisfy condition $\Delta^q$ for large arguments.   We employ now the functions $\{g_{in}\}_{i\in N_n}$ constructed in the proof of   Theorem \ref{th:concavity}. They  satisfy the inequalities (\ref{eq:1333}) and (\ref{eq:1444}).
 Hence for all $\lambda > 0$,
$I_\phi (\lambda \sum_{i\in N_n} g_{in}) \rightarrow 0$, equivalently that $\|\sum_{i\in N_n} g_{in} \| \rightarrow 0$ as
$n\rightarrow \infty$. By this, (\ref{eq:1333}) and the assumption that $L_\phi$ has cotype $q$,
\[
\frac12\le\sum_{i\in N_n} \|g_{in}\|^q \le K^q\left( \int_0^1\left\|\sum_{i\in N_n} r_i(t) g_{in}\right\| \, dt\right)^q = K^q \left\|\sum_{i\in N_n} g_{in} \right\|^q \to 0,
\]
as $n\to\infty$, which concludes that  $L_\phi$ has no cotype $q$, and thus $\phi\in \Delta^q$.

 Let now $\phi$ satisfy condition $\Delta^q$. Then by Theorem \ref{th:concavity}, $L_\phi$ is $q$-concave.

 By  the general assumption $\alpha_\phi>0$ and Remark \ref{rem:delta}, $\phi\in\Delta^{*r}$ for some $r>0$.  This  yields that $L_\phi$ is $r$-convex by Theorem \ref{th:convexity}.
Therefore in view of the generalization of   Khintchine's inequality given by Theorem \ref{th:Khinchine}  and $2\le q<\infty$, 
\[
\int_0^1\left\|\sum_{i=1}^n r(t) f_i\right\|_\phi\,dt \ge C \left\|\left(\sum_{i=1}^n |f_i|^2\right)^{1/2}\right\|_\phi \ge C \left\|\left(\sum_{i=1}^n |f_i|^q\right)^{1/q}\right\|_\phi \ge CD_q\left(\sum_{i=1}^n \|f_i\|_\phi^q\right)^{1/q},
\]
where $D_q$ is a $q$-concavity constant of $L_\phi$.
Hence $L_\phi$ has cotype $q$.

\end{proof}

\begin{proposition}\label{prop:type1} Let $0<p\le 2$. If $L_\phi$ has type $p$ then $\phi$ satisfies condition $\Delta^{*p}$.
\end{proposition}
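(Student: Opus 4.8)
The plan is to recognize that type $p$, once specialized to disjointly supported functions, is nothing but the upper $p$-estimate, and then to invoke Theorem \ref{th:convexity}. Thus the whole argument reduces to a short unwinding of the definition of type, with all the genuine analytic work already carried out in Theorem \ref{th:convexity}. First I would fix disjointly supported $f_1,\dots,f_n \in L_\phi$ and observe that for every sign pattern $\theta_k = \pm 1$ one has, pointwise on $\Omega$, $\left|\sum_{k=1}^n \theta_k f_k\right| = \sum_{k=1}^n |f_k|$, since at each point of $\Omega$ at most one summand is nonzero. Because $L_\phi$ is a quasi-Banach lattice, the quasi-norm depends only on $|\cdot|$, so $\left\|\sum_{k=1}^n \theta_k f_k\right\| = \left\|\sum_{k=1}^n |f_k|\right\|$ for every sign pattern.

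Next I would pass to the Rademacher average. Since the first $n$ Rademacher functions realize all $2^n$ sign patterns, each on a set of measure $2^{-n}$, the previous identity yields
\[
\int_0^1 \left\|\sum_{k=1}^n r_k(t) f_k\right\| dt = \frac{1}{2^n}\sum_{\theta_k = \pm 1} \left\|\sum_{k=1}^n \theta_k f_k\right\| = \left\|\sum_{k=1}^n |f_k|\right\|.
\]
Feeding the $f_k$ into the type $p$ inequality and combining with this identity gives, for all disjointly supported $f_1,\dots,f_n$,
\[
\left\|\sum_{k=1}^n |f_k|\right\| \le K\left(\sum_{k=1}^n \|f_k\|^p\right)^{1/p}.
\]
Finally, again using disjointness, $\left(\sum_{k=1}^n |f_k|^p\right)^{1/p} = \sum_{k=1}^n |f_k|$ pointwise, so the left-hand side equals $\left\|\left(\sum_{k=1}^n |f_k|^p\right)^{1/p}\right\|$; hence the displayed inequality is exactly the upper $p$-estimate for $L_\phi$. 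By the equivalence of (ii) and (iii) in Theorem \ref{th:convexity}, the upper $p$-estimate is equivalent to $\phi \in \Delta^{*p}$, which is the assertion.

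There is essentially no hard step: the proof is a direct reduction. The only point needing care is the passage from the integral over $[0,1]$ to the average over sign patterns and the verification that on disjoint supports the Rademacher average coincides with the disjoint-sum quasi-norm, but this is immediate from the lattice property together with disjointness. All the substantive content has been isolated in Theorem \ref{th:convexity}, so once type $p$ is seen to force the upper $p$-estimate, the proposition follows as a short corollary.
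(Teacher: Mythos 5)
Your proof is correct, and it takes a genuinely different route from the paper. You reduce the statement to Theorem \ref{th:convexity}: the identity $\int_0^1\bigl\|\sum_k r_k(t)f_k\bigr\|\,dt=\bigl\|\sum_k|f_k|\bigr\|$ for disjointly supported $f_k$ (valid since the lattice quasi-norm depends only on the modulus and all $2^n$ sign patterns give the same function) shows that type $p$ immediately yields an upper $p$-estimate, and then the implication (ii)~$\Rightarrow$~(iii) of Theorem \ref{th:convexity} gives $\phi\in\Delta^{*p}$. This is legitimate and non-circular, since Theorem \ref{th:convexity} is established in Section 2 independently of anything in Section 3. The paper instead argues by contrapositive with an explicit construction: assuming $\Delta^{*p}$ fails, it extracts sequences $\{a_n\}$, $\{u_n\}$, uses $\alpha_\phi>0$ and the regularization of Proposition \ref{prop:delta*p} to get $\phi(u^{1/s})$ convex for some $s<p$, and builds disjointly supported families $g_{in}$ with $\sum_{i\in N_n}\|g_{in}\|^p\to 0$ while $\bigl\|\sum_{i\in N_n}g_{in}\bigr\|\ge 1$, contradicting type $p$ via the very same disjoint-support Rademacher identity you use in (\ref{eq:14}). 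In effect the paper re-proves, with explicit witnesses, the disjoint-support content that you outsource to Theorem \ref{th:convexity}. Your reduction is shorter and modular, concentrating all the analytic work in one place; the paper's construction is self-contained within the proposition and exhibits concretely how type degenerates when $\Delta^{*p}$ fails, at the cost of repeating machinery already present in Section 2.
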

\begin{proof}
We provide a proof only in the case of finite non-atomic measure. Assume without loss of generality that $\phi(1) = 1$ and $\mu(\Omega) = 1$. 

Assume that $\phi$ does not satisfy condition $\Delta^{*p}$ for large arguments. There
exist infinite sequences $\{a_n\}$ and $\{u_n\}$, such that $1\le
u_n \rightarrow \infty$ as $n\rightarrow \infty$, $a_n\ge 1$ and
\begin{equation}\label{eq:16}
\phi(a_n^{1/p}u_n) \le a^{-n}\, a_n\,\phi(u_n),
\end{equation}
for every $n\in\Bbb N$ and some $a  \ge 2$. Observe that the sequence 
 $\{a_n\}$ is not bounded.
  If for a contrary, for some $K>0$, $a_n\le K$ for all $n\in\mathbb{N}$, then  
\[
\phi(u_n)\le \phi(a_n^{1/p} u_n)\le a^{-n} a_n \phi(u_n) \ \ \text{so}\ \ \ 1 \le \frac{a_n}{a^n} \le \frac{K}{a^n} \to 0 \ \ \text{as} \ \ \  n\to\infty,
\]
 which is a contradiction.  Recall, given $a\in \mathbb{R}$, the ceiling function $\ceil*{a}$ is the smallest integer bigger than or equal to 
$a$.  Let $\ceil*{a_0} = 0$ and
\[
N_n = \left\{ 1 + \sum_{i=0}^{n-1} \ceil*{a_i},  2 + \sum_{i=0}^{n-1} \ceil*{a_i},
\dots, \ceil*{a_n} + \sum_{i=0}^{n-1} \ceil*{a_i} \right\},
\]
for every $n\in \Bbb N$.  The sequence $\{N_n\}$ is a disjoint
partition of  natural numbers $\Bbb N$, and each $N_n$ contains
exactly $\ceil*{a_n}$ consecutive natural numbers.   
By the assumption $\alpha_\phi > 0$, in view of Remark \ref{rem:delta}, $\phi\in\Delta^{*s}$ for some $p>s>0$. Further by Proposition \ref{prop:delta*p}, 
 the function $\phi(u^{1/s})$ is equivalent to a convex function. Assume thus not loosing generality that $\phi(u^{1/s})$ is  convex.  Thus by (\ref{eq:16}),
\begin{equation}\label{eq:166}
\ceil*{a_n} \phi(u_n) \ge 2^n \phi(a_n^{1/p} u_n) = 2^n \phi((a_n^{s/p} u_n^s)^{1/s}) \ge \phi(2^{n/s} a_n^{1/p} u_n).
\end{equation}
Clearly
\[
\frac{\ceil*{a_n}}{\phi(2^{n/s} a_n ^{1/p} u_n)} \le \frac{a_n+1}{\phi(2^{n/s} a_n ^{1/p})}.
\]
Consider now the convex function 
$\psi(u) = \phi(u^{1/s})$.
It follows that $\psi(2^x x^{s/p}) \ge 2^x x^s \psi(1)$ for $x\ge 1$.  Therefore 
\[
\lim_{x\to\infty}\frac{x+1}{\phi(2^{x/s} x^{1/p})} =  \lim_{x\to\infty}\frac{x+1}{\psi(2^x x^{s/p})} \le \lim_{x\to\infty}\frac{2x}{2^x x^{s/p} \psi(1)} = \lim_{x\to\infty}\frac{2x^{1-\frac{s}{p}}}{2^x \psi(1)} =0.
\]
Hence choosing a subsequence if necessary we can assume that
\[
\sum_{n=1}^\infty \frac{\ceil*{a_n}}{\phi(2^{n/s} a_n ^{1/p} u_n)} < \mu(\Omega) = 1.
\] 
Therefore for every $n\in\Bbb
N$ there exists a finite sequence of disjoint sets $\{A_{in}\}_{i\in N_n}\subset \Omega$
such that 
$$
\mu (A_{in}) = \frac1{\phi(2^{n/s}a_n^{1/p}u_n)}
$$
for each $i\in N_n$, $n\in \Bbb N$.  Defining for $i\in N_n$ and $n\in \Bbb N$,
\[
g_{in} = u_n\chi_{A_{in}},
\]
 it follows  that  
\[
I_\phi (2^{n/s} a_n^{1/p}g_{in}) = \phi (2^{n/s} a_n^{1/p}u_n) \mu
(A_{in}) = 1.
\]
Consequently,  
\begin{equation}\label{eq:13}
\| g_{in} \|^p = \frac{1}{2^{np/s} a_n} \ \ \ \text{and}\ \  \ \sum_{i\in N_n} \|g_{in}\|^p =  \frac{\ceil*{a_n}}{2^{np/s}a_n} \le  \frac
{2}{2^{np/s}} \to 0,
\end{equation}
as $n\to \infty$.
On the other hand 
 by (\ref{eq:166}), for every $n\in\mathbb{N}$,
\[
I_\phi\left( \sum_{i\in N_n} g_{in}\right) =
\ceil*{a_n}\phi(u_n)\mu(A_{in}) = \frac{\ceil*{a_n}\phi(u_n)}{ \phi (2^{n/s}a_n^{1/p}u_n)}  \ge 1.
\]
Now since $g_{in}$ are disjointly supported for $i\in N_n$, we get 
\begin{equation}\label{eq:14}
K\left(\sum_{i\in N_n} \|g_{in}\|^p\right)^{1/p}\ge \int_0^1\left\|\sum_{i\in N_n} r_i(t) g_{in}\right\| \, dt= \left\|\sum_{i\in N_n} g_{in}\right\| \ge 1. 
\end{equation}
Finally combining (\ref{eq:13}) and (\ref{eq:14}),  $L_\phi$ cannot have type $p$, which which finishes the proof.

\end{proof}

\begin{proposition}\label{prop:type2}
Let $1<p\le 2$. If $L_\phi$ has type $p$ then $\phi$ satisfies condition $\Delta_2$.

\end{proposition}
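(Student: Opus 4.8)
The plan is to argue by contraposition: I will show that if $\phi\notin\Delta_2$ then $L_\phi$ fails to have type $p$ for every $p>1$, which is exactly the contrapositive of the statement. The starting point is Proposition~\ref{prop:linfty}, by which a failure of $\Delta_2$ forces $L_\phi$ to contain an order isomorphically isometric copy of $\ell_\infty$. Since type passes to subspaces with the same constant (for vectors lying in a subspace the type inequality is literally the inequality in the ambient space, the norm being the restricted one), it suffices to prove that $\ell_\infty$ has no type $p$ when $p>1$; the isometry of the embedding guarantees the relevant quantities are computed exactly in $\ell_\infty$.

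To see that $\ell_\infty$ has no type $p>1$, I would exhibit, for each $n$, an isometric copy of $\ell_\infty^{2^n}$ sitting inside $\ell_\infty$ together with a bad family of vectors. Index the $2^n$ coordinates by sign vectors $\theta=(\theta_1,\dots,\theta_n)\in\{-1,1\}^n$ and set $x_k(\theta)=\theta_k$ for $k=1,\dots,n$, so that $\|x_k\|_\infty=1$. For any signs $\eta=(\eta_1,\dots,\eta_n)$,
\[
\Big\|\sum_{k=1}^n \eta_k x_k\Big\|_\infty=\max_{\theta\in\{-1,1\}^n}\Big|\sum_{k=1}^n\eta_k\theta_k\Big|=n,
\]
the maximum being attained at $\theta=\eta$. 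Since for almost every $t$ the vector $(r_1(t),\dots,r_n(t))$ is such a sign vector, $\int_0^1\|\sum_k r_k(t)x_k\|_\infty\,dt=n$, whereas $\left(\sum_{k=1}^n\|x_k\|_\infty^p\right)^{1/p}=n^{1/p}$. The type-$p$ inequality would then force $n\le Kn^{1/p}$, i.e. $n^{1-1/p}\le K$ for all $n$, which is impossible for $p>1$. Transplanting these finitely supported vectors through the isometric embedding into $L_\phi$ reproduces the same estimates there, contradicting type $p$ of $L_\phi$.

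The one point that must be gotten right is that the exponent $p>1$ is exactly what makes $1-1/p$ strictly positive, so that the blow-up $n^{1-1/p}\to\infty$ is genuine; this is the single place where the hypothesis $p>1$ (rather than the weaker $0<p\le2$ of Proposition~\ref{prop:type1}) is used, and it is why $\Delta_2$—rather than merely $\Delta^{*p}$—can be extracted. I expect no serious obstacle beyond bookkeeping: the argument is insensitive to the measure type, since Proposition~\ref{prop:linfty} already supplies the $\ell_\infty$ copy in all three settings, so the infinite, finite and discrete cases need no separate treatment. One could instead invoke Kalton's Theorem~\ref{th:kal1} to deduce that type $p>1$ makes $L_\phi$ normable, but normability alone does not yield $\Delta_2$ (for instance $\phi(u)=e^u-1$ is convex, hence $L_\phi$ is normable, yet $\phi\notin\Delta_2$), so the $\ell_\infty$ computation above cannot be bypassed.
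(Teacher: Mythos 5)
Your proof is correct, but it takes a genuinely different route from the paper's. You use the $\ell_\infty$ copy supplied by Proposition~\ref{prop:linfty} to kill type directly: your vectors $x_k(\theta)=\theta_k$ span an isometric copy of $\ell_1^n$ inside $\ell_\infty^{2^n}$ (indeed $\left\|\sum_k a_k x_k\right\|_\infty=\sum_k|a_k|$), and the sign computation shows a type-$p$ constant would have to dominate $n^{1-1/p}$ for all $n$, which is absurd for $p>1$; since the type inequality restricts to subspaces with the same constant, $L_\phi$ cannot have type $p$. The paper instead uses the $\ell_\infty$ copy only to rule out finite cotype (the easier unit-vector computation in (\ref{eq:112})), and then needs two nontrivial external results to turn type $p>1$ into finite cotype: Kalton's theorem (Theorem~\ref{th:kal1}) that type $p>1$ implies normability, and the Banach-lattice result (Theorem~\ref{th:1f13}) that a Banach lattice of type $p>1$ is $q$-concave for some $q<\infty$, whence finite cotype and a contradiction. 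Your argument is more elementary and self-contained — it bypasses both Kalton's theorem and the Lindenstrauss--Tzafriri machinery, at the cost of a slightly more involved (but classical) combinatorial step, namely the fact that $\ell_\infty$ contains the $\ell_1^n$'s isometrically and therefore has only trivial type. Your closing remark is also accurate: normability alone cannot yield $\Delta_2$ (your example $\phi(u)=e^u-1$ shows this), which is exactly why the paper, too, must combine Kalton's theorem with the $\ell_\infty$ cotype computation rather than use it in isolation.
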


\begin{proof}

If   $\phi$ does not satisfy $\Delta_2$ then  by Proposition \ref{prop:linfty}, $L_\phi$ contains an isomorphic copy of $l_\infty$, which by reasoning as in (\ref{eq:112}), does not have any finite cotype.
Thus $L_\phi$ does not posses any finite cotype either.  Now observe that $L_\phi$ can be treated as Banach space. Indeed if $L_\phi$ has type $p>1$ then by Theorem \ref{th:kal1}, the space $L_\phi$ is normable.   Consequently in view of Theorem \ref{th:1f13} it cannot have  type bigger than $1$. 

\end{proof}

\begin{theorem} \label{th:typecriterion}
Given $1<p\le 2$ and an Orlicz function $\phi$, the Orlicz space $L_\phi$ has
type $p$ if and only if  $\phi$ satisfies both conditions $\Delta_2$ and $\Delta^{*p}$.
\end{theorem}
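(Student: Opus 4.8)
The plan is to treat the two implications separately. The forward implication is essentially already in hand: if $L_\phi$ has type $p$ with $1<p\le 2$, then $\phi\in\Delta^{*p}$ by \propref{prop:type1} and $\phi\in\Delta_2$ by \propref{prop:type2}, so nothing further is needed in that direction. The substance of the theorem lies in the converse, for which I would exploit the fact that the two hypotheses force $L_\phi$ to be isomorphic to a Banach lattice, thereby unlocking the Banach-space duality machinery recorded in \thmref{th:LT1f18}.

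First I would reduce to the convex case. Since $\phi\in\Delta^{*p}$ with $p>1$, \propref{prop:delta*p} produces an Orlicz function $\psi$ equivalent to $\phi$ with $\psi(u^{1/p})$ convex; writing $\psi(u)=h(u^p)$ with $h(s)=\psi(s^{1/p})$ convex and increasing and using that $u\mapsto u^p$ is convex for $p\ge 1$, the composition $\psi$ is itself convex. Thus, replacing $\phi$ by the equivalent $\psi$ (which changes neither the space nor, up to equivalence, the quasi-norm, and preserves type as an isomorphic invariant), I may assume $\phi$ is convex, so that $L_\phi$ is a Banach lattice with the Luxemburg norm. Both $\Delta_2$ and $\Delta^{*p}$ persist under this equivalence by \lemref{lem:equiv}.

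Next I would identify the dual and verify the two hypotheses of \thmref{th:LT1f18} for $X=L_\phi$. Because $\phi\in\Delta_2$ we have $L_\phi=E_\phi$, hence $X^*\simeq L_{\phi^*}$. Since $\phi$ is convex and $\phi\in\Delta^{*p}$ with $1/p+1/p'=1$, \propref{prop:dual} gives $\phi^*\in\Delta^{p'}$; as $1<p\le 2$ forces $p'\ge 2$, \thmref{th:cotype} yields that $L_{\phi^*}$ has cotype $p'$. For the second hypothesis, $\phi\in\Delta^{*p}$ with $p>1$ gives $\alpha_\phi>1$, so by \propref{prop:delta2} (part (II)) $\phi^*\in\Delta_2$, whence $\phi^*\in\Delta^q$ for some $q<\infty$ by \propref{prop:delta2} (part (I)); then \thmref{th:concavity} shows $L_{\phi^*}$ satisfies a lower $q$-estimate. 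With $X^*\simeq L_{\phi^*}$ having cotype $p'$ and a lower $q$-estimate, \thmref{th:LT1f18} delivers that $X=L_\phi$ has type $p$, as desired.

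The main obstacle, and the only step demanding genuine care, is the legitimacy of passing first to a convex representative and then to the dual: one must confirm that the equivalence to a convex $\psi$ does not disturb type (it does not, being an isomorphic invariant) and that $\Delta_2$ is precisely the condition guaranteeing $L_\phi=E_\phi$, so that the clean duality $(E_\phi)^*\simeq L_{\phi^*}$ is genuinely available. Everything else is a bookkeeping chain through \propref{prop:dual}, \propref{prop:delta2}, \thmref{th:cotype}, and \thmref{th:concavity}, together with the single numerical check that $1<p\le 2$ indeed yields a conjugate exponent $p'\ge 2$, as required to invoke the cotype characterization.
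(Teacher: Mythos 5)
Your proof is correct, but your sufficiency argument takes a genuinely different route from the paper's. The necessity direction is exactly the paper's: \propref{prop:type1} and \propref{prop:type2}. For sufficiency, the paper never passes to a dual space: it gets $p$-convexity from $\Delta^{*p}$ (\thmref{th:convexity}) and $q$-concavity from $\Delta_2$ (\propref{prop:delta2} and \thmref{th:concavity}), then concludes type $p$ directly from the quasi-Banach lattice Khintchine inequality (\thmref{th:Khinchine}) combined with the pointwise bound $\left(\sum_{i=1}^n|f_i|^2\right)^{1/2}\le\left(\sum_{i=1}^n|f_i|^p\right)^{1/p}$ valid for $p\le 2$. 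You instead reduce to a convex representative --- your observation that convexity of $\psi(u^{1/p})$ with $p\ge 1$ forces $\psi$ itself to be convex is correct and is what makes the reduction legitimate --- and then run Banach-lattice duality: $(L_\phi)^*\simeq L_{\phi^*}$ via $\Delta_2$, cotype $p'$ of $L_{\phi^*}$ via \propref{prop:dual} and \thmref{th:cotype}, the lower $q$-estimate via \propref{prop:delta2} and \thmref{th:concavity}, and finally \thmref{th:LT1f18}. (The paper does invoke \thmref{th:LT1f18} in its convex-case discussion, but only in the necessity direction; you use the converse direction of that theorem for sufficiency.) Your route is shorter given the cited results and avoids \thmref{th:Khinchine} altogether; the paper's route stays entirely inside the quasi-Banach framework --- in keeping with its stated aim of avoiding duality arguments, which are unavailable there --- and works verbatim for all $0<p\le 2$, as the paper notes in the remark following the theorem, whereas your duality argument is intrinsically limited to $p>1$. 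Two minor points to tidy: \lemref{lem:equiv} as stated covers only $\Delta^q$ and $\Delta^{*p}$, so preservation of $\Delta_2$ under equivalence should be routed through \propref{prop:delta2}(I) or checked directly; and before applying \thmref{th:cotype} and \thmref{th:concavity} to $L_{\phi^*}$ one should note that $\alpha_\phi>1$ guarantees $\phi^*$ is a finite-valued Orlicz function (\propref{prop:delta2}(II)) --- though the paper's own convex-case necessity argument glosses over the same point.
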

\begin{proof}

{\it Proof in the case $1<p\le2$ and a convex $\phi$.}

Here we use general relations in Banach spaces between type and convexity, and duality between type and cotype.
 
Let  $\phi$ satisfy $\Delta_2$ and $\Delta^{*p}$. 
 It follows that $L_\phi$ is $p$-convex by Theorem \ref{th:convexity}. Moreover, 
by Proposition \ref{prop:delta2},
$\phi\in\Delta^q$ for some  $1<q<\infty$, and hence $L_\phi$ is $q$-concave in view of Theorem \ref{th:concavity}.
 Applying now the diagram on page 101 in \cite{LT2}, $L_\phi$ has type $p$.

If $L_\phi$ has type $p>1$, then due to Theorem \ref{th:1f13}, the
space is $q$-concave for some $q<\infty$. Consequently,  it cannot contain an order isomorphic copy of $l_\infty$ and so by Proposition \ref{prop:linfty}, $\phi$ must satisfy condition
$\Delta_2$. It  follows that  $(L_\phi)^* \simeq L_{\phi^*}$. This in turn implies that $L_{\phi^*}$ has cotype $p'$ from Theorem \ref{th:LT1f18}.  Now by Theorem \ref{th:cotype}, $\phi^*\in\Delta^{p'}$ and then in view of Proposition \ref{prop:dual}, $\phi\in\Delta^{*p}$.

\vspace{2mm}

{\it Proof in  case $1< p\le2$ and an arbitrary Orlicz function $\phi$.}

The {\it necessity} follows from Propositions \ref{prop:type1} and \ref{prop:type2}.

{\it Sufficiency.}  Let now $\phi\in \Delta_2$ and $\phi\in\Delta^{*p}$. 

By Theorem \ref{th:convexity}, the condition $\Delta^{*p}$ implies that the space $L_\phi$ is $p$-convex.  

 In view of Proposition \ref{prop:delta2} the assumption $\Delta_2$ for $\phi$ implies that $\phi\in \Delta^q$ for some $q<\infty$, and so $L_\phi$ is $q$-concave by Theorem \ref{th:concavity}.  Then applying Theorem \ref{th:Khinchine} for $L_\phi$ and  $0<p\le 2$, for any $f_1,\dots,f_n \in L_\phi$ we get
\[
\int_0^1\left\|\sum_{i=1}^n r(t) f_i\right\|_\phi\,dt \le C \left\|\left(\sum_{i=1}^n |f_i|^2\right)^{1/2}\right\|_\phi \le C \left\|\left(\sum_{i=1}^n |f_i|^p\right)^{1/p}\right\|_\phi \le C \left(\sum_{i=1}^n \|f_i\|_\phi^p\right)^{1/p}.
\]
Hence $L_\phi$ has type $p$.

\end{proof}

\begin{remark}Observe that the proof of sufficiency in Theorem \ref{th:typecriterion} works for any  $0<p\le 2$. 
\end{remark}

\begin{remark} Kalton \cite[Theorem 4.2]{Kal1981} showed that for $0<p<1$, any quasi-Banach space is $p$-normable if and only if it has type $p$.  For $p=1$ this is no longer true.  Clearly any normable space has type $1$, however there exist quasi-Banach spaces that posses type $1$ but they are not normable \cite{Kal1981}. Natural examples of such spaces are weak-$L^1$ spaces considered in \cite{KalKam, Kam2018}.  In this respect Orlicz spaces  behave more regularly. As we see in the  next corollaries, normability is equivalent to type $1$ in $L_\phi$.

\end{remark}


\begin{corollary}\label{cor:pnormability}
Let $\phi$ be an Orlicz function and $0<p\le 1$. Then $\rm(i), (ii)$ and $\rm(iii)$ are equivalent.
\begin{itemize}
\item[(i)] $\phi$ satisfies condition $\Delta^{*p}$.
\item[(ii)]  $L_\phi$ is $p$-normable.
\item[(iii)]   $L_\phi$ has type $p$.
\end{itemize}
\end{corollary}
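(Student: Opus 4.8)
The plan is to close the cycle of implications $(\mathrm{iii})\Rightarrow(\mathrm{i})\Rightarrow(\mathrm{ii})\Rightarrow(\mathrm{iii})$, each step leaning on a result already proved in the excerpt, so that no essentially new construction is required. The implication $(\mathrm{iii})\Rightarrow(\mathrm{i})$ needs no separate argument: it is exactly Proposition \ref{prop:type1}, whose hypothesis $0<p\le 2$ covers the present range $0<p\le 1$. Thus type $p$ already forces $\phi\in\Delta^{*p}$.

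For $(\mathrm{i})\Rightarrow(\mathrm{ii})$ I would first apply Theorem \ref{th:convexity} to convert condition $\Delta^{*p}$ into $p$-convexity of $L_\phi$, so that there is a constant $C_p>0$ with
\[
\left\|\left(\sum_{i=1}^n |x_i|^p\right)^{1/p}\right\| \le C_p \left(\sum_{i=1}^n \|x_i\|^p\right)^{1/p}
\]
for all $x_1,\dots,x_n\in L_\phi$. The bridge from $p$-convexity to $p$-normability is purely lattice-theoretic and is where the restriction $0<p\le 1$ enters. Since $t\mapsto t^p$ is subadditive on $\mathbb{R}_+$ for $0<p\le 1$, one has the pointwise bound
\[
\left|\sum_{i=1}^n x_i\right| \le \sum_{i=1}^n |x_i| \le \left(\sum_{i=1}^n |x_i|^p\right)^{1/p}.
\]
Because the Minkowski functional on $L_\phi$ is a monotone (lattice) quasi-norm, applying $\|\cdot\|$ to this pointwise inequality and then the $p$-convexity estimate yields $\|\sum_i x_i\| \le C_p\big(\sum_i \|x_i\|^p\big)^{1/p}$, which is precisely $p$-normability with $C=C_p$.

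For $(\mathrm{ii})\Rightarrow(\mathrm{iii})$ the argument is immediate from the definitions. If $L_\phi$ is $p$-normable with constant $C$, then for each fixed sign sequence $\theta_i=\pm 1$ we have $\big\|\sum_i \theta_i x_i\big\| \le C\big(\sum_i \|\theta_i x_i\|^p\big)^{1/p} = C\big(\sum_i \|x_i\|^p\big)^{1/p}$, the last equality since $\|\theta_i x_i\|=\|x_i\|$. Averaging this bound over all sign sequences, equivalently integrating against the Rademacher functions over $[0,1]$, leaves the right-hand side unchanged and gives
\[
\int_0^1 \left\|\sum_{i=1}^n r_i(t) x_i\right\| dt \le C\left(\sum_{i=1}^n \|x_i\|^p\right)^{1/p},
\]
that is, type $p$ with constant $K=C$. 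Composing the three implications establishes the equivalence of $(\mathrm{i})$, $(\mathrm{ii})$ and $(\mathrm{iii})$.

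I do not expect a genuine obstacle, since the corollary essentially repackages Theorem \ref{th:convexity} and Proposition \ref{prop:type1}. The only point demanding care is the hypothesis $0<p\le 1$: it is exactly what makes the elementary inequality $\sum_i |x_i|\le\big(\sum_i |x_i|^p\big)^{1/p}$ valid and hence lets $p$-convexity be upgraded to $p$-normability. For $p>1$ this step fails, which is consistent with the preceding remark that type $1$ need not imply normability in general quasi-Banach spaces, and with the fact that the equivalence of normability with type is a special feature of Orlicz spaces here.
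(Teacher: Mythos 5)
Your proposal is correct and follows essentially the same route as the paper: the cycle (i)$\Rightarrow$(ii)$\Rightarrow$(iii)$\Rightarrow$(i), with Theorem \ref{th:convexity} plus the subadditivity of $t\mapsto t^p$ giving (i)$\Rightarrow$(ii), and Proposition \ref{prop:type1} giving (iii)$\Rightarrow$(i). The only (inessential) difference is in (ii)$\Rightarrow$(iii), where you apply $p$-normability directly to the signed vectors $\theta_i x_i$ and average, while the paper first uses the lattice bound $\left|\sum_i r_i(t)x_i\right|\le \sum_i |x_i|$ and then applies $p$-normability to the $|x_i|$; your variant even dispenses with the lattice structure.
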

\begin{proof}
(i) $\Rightarrow$ (ii) By Theorem \ref{th:convexity}, $L_\phi$ is $p$-convex. Then by the inequality $|a+b|^p \le |a|^p + |b|^p$, $a,b\in \mathbb{R}$, we get
\[
\left\|\sum_{i=1}^n f_i\right\| \le \left\|\left(\sum_{i=1}^n |f_i|^p\right)^{1/p}\right\| \le C_p\left( \sum_{i=1}^n \|f_i\|^p\right)^{1/p}
\]
for  any $f_1,\dots,f_n\in L_\phi$, $n\in\mathbb{N}$. Thus $\|\cdot\|$ is $p$-normable. 

(ii) $\Rightarrow$ (iii)
Let $f_1,\dots,f_n\in L_\phi$.  By $p$-normability, there exists $C>0$ such that
\[
\int_0^1 \left\|\sum_{i=1}^n r_i(t) f_i \right\| dt \le \left\|\sum_{i=1}^n |f_i|\right\| \le C \left(\sum_{i=1}^n \|f_i\|^p\right)^{1/p}.
\]

(iii) $\Rightarrow $ (i) See Proposition \ref{prop:type1}.

\end{proof}
 In view of Proposition \ref{prop:delta*p}, the immediate consequence of Corollary \ref{cor:pnormability} for $p=1$ is the following result. 
 
\begin{corollary}\label{cor:1normability}
Let $\phi$ be an Orlicz function. Then $\rm(i), (ii)$ and $\rm(iii)$ are equivalent.
\begin{itemize}
\item[(i)] $\phi$ is equivalent to a convex function. 
\item[(ii)]  $L_\phi$ is normable.
\item[(iii)]   $L_\phi$ has type $1$.
\end{itemize}
\end{corollary}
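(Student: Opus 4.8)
The plan is to obtain this statement as the specialization to $p=1$ of Corollary~\ref{cor:pnormability}, combined with the regularization result Proposition~\ref{prop:delta*p}. All of the analytic work has already been carried out in those results, so what remains is purely a matter of correctly translating each of the three listed conditions into a single growth condition on $\phi$, namely $\phi\in\Delta^{*1}$, and then reading off the equivalences.

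First I would identify condition (i) with $\Delta^{*1}$. Applying Proposition~\ref{prop:delta*p} with $p=1$, the condition $\phi\in\Delta^{*1}$ is equivalent to the existence of an Orlicz function $\psi$ equivalent to $\phi$ for which $\psi(u^{1/1})=\psi(u)$ is convex; that is, $\phi$ is equivalent to a convex Orlicz function. Hence (i) holds if and only if $\phi\in\Delta^{*1}$.

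Next I would invoke Corollary~\ref{cor:pnormability} with $p=1$, which yields the chain
\[
\phi\in\Delta^{*1} \iff L_\phi \text{ is } 1\text{-normable} \iff L_\phi \text{ has type } 1.
\]
By the convention recalled in the preliminaries, a $1$-normable quasi-Banach space is exactly a normable one, so the middle condition coincides with (ii). Since $L_\phi$ is a quasi-Banach space under the standing assumption $\alpha_\phi>0$, combining this chain with the identification of (i) and $\Delta^{*1}$ from the previous step gives (i) $\Leftrightarrow$ (ii) $\Leftrightarrow$ (iii).

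The argument is essentially bookkeeping, so I do not anticipate a genuine obstacle; the only point demanding care is the correct reading of the $p=1$ instance of Proposition~\ref{prop:delta*p}, where the convexity of $\psi(u^{1/p})$ degenerates precisely to convexity of $\psi$ itself, matching condition (i) exactly. One should also confirm that it is the non-convex (plain Orlicz function) branch of Proposition~\ref{prop:delta*p} that is relevant here, since $\phi$ is a priori only assumed to be an Orlicz function rather than a convex one.
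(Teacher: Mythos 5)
Your proposal is correct and matches the paper's own argument: the paper likewise obtains this corollary as the $p=1$ case of Corollary~\ref{cor:pnormability}, using Proposition~\ref{prop:delta*p} to identify the condition $\Delta^{*1}$ with equivalence of $\phi$ to a convex function. Your extra care about using the plain (non-convex) branch of Proposition~\ref{prop:delta*p} and about $1$-normability meaning normability is exactly the right bookkeeping, and nothing further is needed.
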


\section{Concluding results}

Summarizing the results from two previous sections, Propositions \ref{prop:deltap}, \ref{prop:delta*p},  Theorems \ref{th:concavity}, \ref{th:convexity}, \ref{th:cotype}, \ref{th:typecriterion} and Corollary \ref{cor:pnormability}, we obtain the next two theorems.

\begin{theorem}  
Let $\alpha_\phi> 0$ and $0< q < \infty$.  Consider the following properties.
\begin{itemize}
\item[{\rm (i)}] $\phi$ satisfies condition $\Delta^{q}$.
\item[{\rm (ii)}] There exists an Orlicz function $\psi$
equivalent to
$\phi$ such that $\psi(u^{\frac{1}{q}})$ is concave.
\item[{\rm (iii)}] $L_\phi$ is $q$-convex.
\item[{\rm (iv)}] $L_\phi$ satisfies a lower
$q$-estimate.
\item[{\rm (v)}] $L_\phi$ has cotype  $q$.
\end{itemize}
The conditions  {\rm (i)-(iv)} are equivalent. If $q\ge 2$, then all conditions {\rm (i)-(v)} are  equivalent.

\end{theorem}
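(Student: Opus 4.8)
The plan is to prove the theorem as a bookkeeping assembly of the characterizations already obtained in Sections 1 and 2, using condition (i), namely $\phi\in\Delta^q$, as the central hub through which every other condition is shown equivalent. Nothing new needs to be constructed; each arrow is a direct appeal to an earlier result in the proper range of $q$.

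First I would dispatch (i) $\Leftrightarrow$ (ii). This is exactly the equivalence of parts (a) and (c) of Proposition \ref{prop:deltap}: $\phi$ satisfies $\Delta^q$ precisely when it is equivalent to an Orlicz function $\psi$ for which $\psi(u^{1/q})$ is concave. No further argument is required beyond citing that proposition.

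Next I would handle the equivalence of $\Delta^q$ with the $q$-concavity condition (iii) and with the lower $q$-estimate (iv). These three are precisely the content of Theorem \ref{th:concavity}, which is valid for every $0<q<\infty$, so the conditions (i)--(iv) are mutually equivalent with no restriction on $q$. Finally, to fold in condition (v) in the range $q\ge 2$, I would invoke Theorem \ref{th:cotype}, which asserts that for $q\ge 2$ the space $L_\phi$ has cotype $q$ if and only if $\phi\in\Delta^q$; appending this link to the chain yields the full five-fold equivalence whenever $q\ge 2$.

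There is no substantive obstacle, since each step is a direct citation; the only point meriting care is the segregation of (v). A quasi-Banach space can possess finite cotype $q$ only for $q\ge 2$, so outside that range cotype $q$ is simply not an attainable property and cannot be merged into the equivalence. This is exactly why the statement asserts the five-fold equivalence solely under $q\ge 2$, while the equivalence of (i)--(iv) persists for all $0<q<\infty$.
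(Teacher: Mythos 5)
Your proposal is correct and coincides with the paper's own treatment: the paper proves this theorem simply by citing Proposition \ref{prop:deltap} for (i) $\Leftrightarrow$ (ii), Theorem \ref{th:concavity} for (i) $\Leftrightarrow$ (iii) $\Leftrightarrow$ (iv), and Theorem \ref{th:cotype} for (i) $\Leftrightarrow$ (v) when $q\ge 2$, exactly as you do. You also correctly read item (iii) as $q$-concavity (the word ``$q$-convex'' in the statement is evidently a typo, since $q$-convexity belongs to the companion theorem on $\Delta^{*p}$), and your remark that cotype $q$ is only meaningful for $q\ge2$ matches the reason the paper restricts the five-fold equivalence to that range.
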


\begin{theorem}
Let $\alpha_\phi > 0$ and $0<p < \infty$. Consider the conditions below.
\begin{itemize}
\item[{\rm (i)}] $\phi$ satisfies condition $\Delta^{*p}$.
\item[{\rm (ii)}] There exists an Orlicz function $\psi$
equivalent to
$\phi$ such that $\psi(u^{\frac{1}{p}})$ is convex.
\item[{\rm(iii)}] 
$L_\phi$ satisfies an
upper $p$-estimate.
\item[{\rm (iv)}] $L_\phi$ is $p$-convex.
\item[{\rm (v)}] $L_\phi$ is  $p$-normable.
\item[{\rm (vi)}] $\phi$ satisfies conditions $\Delta_2$ and $\Delta^{*p}$.
\item[{\rm (vii)}] $L_\phi$ has type  $p$.
\end{itemize}

If $0<p<\infty$ then {\rm (i)-(iv)} are equivalent.

If $1< p \le 2$ then  {\rm (vi)-(vii)} are equivalent.

If $0<p\le 1$, then  {\rm (i)-(v)} and {\rm (vii)} are equivalent.

\end{theorem}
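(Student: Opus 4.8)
The plan is to assemble this theorem entirely from results established in the preceding sections, since each listed condition has already been tied to $\Delta^{*p}$ in one of the earlier statements. The only work is to track which equivalences hold on which range of $p$ and to chain them correctly.

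First I would handle the block {\rm (i)}--{\rm (iv)}, which I expect to be valid for all $0<p<\infty$. The equivalence of {\rm (i)} and {\rm (ii)} is precisely the equivalence of parts (a) and (c) in Proposition \ref{prop:delta*p}. The equivalence of {\rm (i)}, {\rm (iii)} and {\rm (iv)} is the content of Theorem \ref{th:convexity}, whose three equivalent conditions are exactly $p$-convexity, the upper $p$-estimate, and $\Delta^{*p}$. Chaining these gives {\rm (i)} $\Leftrightarrow$ {\rm (ii)} $\Leftrightarrow$ {\rm (iii)} $\Leftrightarrow$ {\rm (iv)}.

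Next, for $1<p\le 2$, the equivalence of {\rm (vi)} and {\rm (vii)} is exactly Theorem \ref{th:typecriterion}, which asserts that $L_\phi$ has type $p$ if and only if $\phi$ satisfies both $\Delta_2$ and $\Delta^{*p}$. Finally, for $0<p\le 1$, Corollary \ref{cor:pnormability} supplies {\rm (i)} $\Leftrightarrow$ {\rm (v)} $\Leftrightarrow$ {\rm (vii)} through its three conditions $\Delta^{*p}$, $p$-normability, and type $p$; combining this with the already-established block {\rm (i)}--{\rm (iv)} yields the equivalence of {\rm (i)}--{\rm (v)} together with {\rm (vii)}.

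The one point requiring care, rather than a genuine obstacle, is the placement of condition {\rm (vii)}. For $0<p\le 1$ it collapses into the $\Delta^{*p}$ cluster, because $p$-convexity already forces $p$-normability through the elementary inequality $|a+b|^p\le |a|^p+|b|^p$, so no extra growth hypothesis is needed. For $1<p\le 2$, by contrast, type $p$ is strictly stronger and requires the additional condition $\Delta_2$: here type $p>1$ forces finite cotype and hence rules out an order copy of $\ell_\infty$ via Proposition \ref{prop:linfty}, which is what pins down $\Delta_2$. This is exactly why {\rm (vii)} is grouped with {\rm (vi)} rather than with {\rm (i)}--{\rm (iv)} in the middle range, and why I would keep the three ranges of $p$ carefully separated throughout.
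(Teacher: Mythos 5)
Your proof is correct and follows exactly the paper's own route: the paper assembles this theorem by citing Proposition \ref{prop:delta*p} for (i)$\Leftrightarrow$(ii), Theorem \ref{th:convexity} for (i)$\Leftrightarrow$(iii)$\Leftrightarrow$(iv), Theorem \ref{th:typecriterion} for (vi)$\Leftrightarrow$(vii) when $1<p\le 2$, and Corollary \ref{cor:pnormability} for the $0<p\le 1$ case, which is precisely your chain. Your closing remark on why (vii) sits in different clusters for different ranges of $p$ is accurate and consistent with the paper's results.
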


For Banach spaces the notions of type and cotype are related to geometric properties of  subspaces uniformly isomorphic to finite dimensional spaces $l_1^n$ and $l_\infty^n$, $n\in\mathbb{N}$, respectively. Recall that $l_1^n$ and $l_\infty^n$ are $n$-dimensional spaces equipped with $l_1$- and $l_\infty$-norm respectively. A Banach space has always type $1\le p\le 2$. If it has type $p=1$, then we say that it has a trivial type. 

\begin{corollary} Let $\phi$ be a convex function. The following conditions are equivalent.
\begin{itemize}
\item[$(1)$] $L_\phi$ has nontrivial type.
\item[$(2)$] $L_\phi$ does not contain ${l_1^n}'s$ uniformly.
\item[$(3)$]  $L_\phi$ is $B$-convex.
\item[$(4)$]  $1<\alpha_\phi \le \beta_\phi< \infty$, that is $\phi$ and
$\phi^*$ satisfy condition $\Delta_2$.
\item[$(5)$] $L_\phi$ is reflexive. 
\end{itemize}
\end{corollary}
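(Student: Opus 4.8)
The plan is to split the five properties into a purely Banach-space part, a growth-condition part, and a duality part, all tied together through \thmref{th:typecriterion}. Since $\phi$ is convex, $\|\cdot\|_\phi$ is a norm and $L_\phi$ is an infinite-dimensional Banach space, so its type always lies in $[1,2]$ and ``nontrivial type'' means type $p$ for some $1<p\le 2$. The equivalences $(1)\Leftrightarrow(2)\Leftrightarrow(3)$ are then not special to Orlicz spaces: for every Banach space, nontrivial type, $B$-convexity, and the absence of uniformly embedded $\ell_1^n$'s coincide (the classical Giesy--James--Pisier circle of results, see \cite{LT2}). I would dispose of these three by citation and spend the real work on $(1)\Leftrightarrow(4)$ and $(4)\Leftrightarrow(5)$.

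For $(1)\Leftrightarrow(4)$ I would invoke \thmref{th:typecriterion}: for $1<p\le 2$, $L_\phi$ has type $p$ iff $\phi\in\Delta_2$ and $\phi\in\Delta^{*p}$. If $L_\phi$ has nontrivial type it has type $p$ for some $1<p\le 2$, whence $\phi\in\Delta_2$ (so $\beta_\phi<\infty$) and $\phi\in\Delta^{*p}$ (so $\alpha_\phi\ge p>1$); conversely, if $1<\alpha_\phi\le\beta_\phi<\infty$, then $\beta_\phi<\infty$ gives $\phi\in\Delta_2$ and, choosing $p$ with $1<p<\min\{\alpha_\phi,2\}$ (or $p=2$ if $\alpha_\phi>2$), $\alpha_\phi>1$ gives $\phi\in\Delta^{*p}$, so \thmref{th:typecriterion} yields type $p>1$. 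The reformulation ``$1<\alpha_\phi\le\beta_\phi<\infty$ iff $\phi,\phi^*\in\Delta_2$'' is exactly \propref{prop:delta2}: part (I) gives $\beta_\phi<\infty\Leftrightarrow\phi\in\Delta_2$ and part (II) gives $\alpha_\phi>1\Leftrightarrow\phi^*\in\Delta_2$; the index identities $\alpha_\phi=\sup\{p:\phi\in\Delta^{*p}\}$ and $\beta_\phi=\inf\{q:\phi\in\Delta^q\}$ from Remark~\ref{rem:delta} are what make these translations exact.

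For $(4)\Rightarrow(5)$ I would run a double-duality computation. Under (4), $\phi\in\Delta_2$ gives $L_\phi=E_\phi$ and $(L_\phi)^*\simeq L_{\phi^*}$ through integral functionals; moreover $\alpha_\phi>1$ forces $\phi^*$ to be a finite-valued convex Orlicz function with $\phi^*\in\Delta_2$, so the same facts applied to $\phi^*$ give $L_{\phi^*}=E_{\phi^*}$ and $(L_{\phi^*})^*\simeq L_{\phi^{**}}=L_\phi$, using $\phi=\phi^{**}$ for convex $\phi$. Composing the two linear isometries recovers the canonical embedding of $L_\phi$ into its bidual, so $L_\phi$ is reflexive. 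For $(5)\Rightarrow(4)$ I would argue by contraposition via \propref{prop:linfty}: if $\phi\notin\Delta_2$, then $L_\phi$ contains an isometric copy of $\ell_\infty$ and cannot be reflexive; once $\phi\in\Delta_2$ is secured, reflexivity of $L_\phi$ descends to its dual $L_{\phi^*}$, and if $\phi^*\notin\Delta_2$ then \propref{prop:linfty} applied to $\phi^*$ embeds $\ell_\infty$ into $L_{\phi^*}$, again contradicting reflexivity; hence $\phi,\phi^*\in\Delta_2$, which is (4).

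The step I expect to be the main obstacle is the clean handling of the conjugate $\phi^*$ in the reflexivity equivalence. The Young conjugate may a priori take the value $+\infty$ and need not be strictly increasing, whereas \propref{prop:linfty} is stated for genuine finite-valued Orlicz functions satisfying the standing hypothesis $\alpha>0$. Before applying it to $\phi^*$ in the direction $(5)\Rightarrow(4)$ I must therefore first argue that reflexivity forces $\phi^*$ to be (equivalent to) a finite-valued Orlicz function --- otherwise $L_{\phi^*}$ degenerates into an $L^\infty$-type space, which is non-reflexive --- and record that any finite-valued convex Orlicz function automatically satisfies $\Delta^{*1}$, hence has lower index $\ge 1>0$ and meets the standing assumption. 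Keeping the three measure-space conventions (all, large, small arguments) aligned across the identification $(L_\phi)^*\simeq L_{\phi^*}$ is the only remaining bookkeeping point.
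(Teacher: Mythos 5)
Your proposal is correct and, on conditions (1)--(4), follows the paper's own route: you dispatch $(1)\Leftrightarrow(2)\Leftrightarrow(3)$ by citing the classical results for general Banach spaces (the paper cites Pisier's theorems from the Diestel--Jarchow--Tonge book), and you prove $(1)\Leftrightarrow(4)$ by combining \thmref{th:typecriterion} with \propref{prop:delta2} and Remark~\ref{rem:delta}, exactly as the paper does. Where you genuinely diverge is $(4)\Leftrightarrow(5)$: the paper disposes of it in one line by citing the classical criterion that $L_\phi$ is reflexive if and only if $\phi,\phi^*\in\Delta_2$ (Bennett--Sharpley, Krasnoselskii--Rutickii, Luxemburg), whereas you reconstruct a proof from the paper's own toolkit --- the identification $(E_\phi)^*\simeq L_{\phi^*}$, the identity $\phi=\phi^{**}$ for convex $\phi$, and \propref{prop:linfty} used contrapositively to embed $\ell_\infty$ when a $\Delta_2$ condition fails. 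Your version buys self-containedness and exhibits \propref{prop:linfty} as already carrying the non-reflexivity half of the criterion; the price is the extra care you correctly flag: (a) the composed isomorphism of $L_\phi$ onto its bidual must be verified to be the canonical embedding (a Banach space can be isometric to its bidual without being reflexive, as James's space shows, though for integral-functional dualities this verification is routine); (b) $\phi^*$ may be infinite-valued or fail strict monotonicity, so before applying \propref{prop:linfty} to $\phi^*$ one must replace it by an equivalent finite-valued Orlicz function, which the paper's remarks on $\phi^*\in\Delta_2$ permit; and (c) the paper's duality statement pairs the Luxemburg norm on $E_\phi$ with the Orlicz norm $\|\cdot\|_{\phi^*}^0$ on $L_{\phi^*}$, so the second dualization is taken with respect to a norm merely equivalent to the Luxemburg one --- harmless for reflexivity, which is invariant under equivalent renormings, but it means your ``composing the two linear isometries'' should read ``composing the two isomorphisms.''
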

\begin{proof}
By the assumption of convexity of $\phi$, $L_\phi$ is a Banach space. The equivalence of (1) and (2) holds true for any Banach space by Pisier's Theorem \cite[Theorems 13.3]{DJT}, as well as the equivalence of (2) and (3) by \cite[Theorem 13.6]{DJT}. 
By Theorem \ref{th:typecriterion}, $L_\phi$ has type $1<p\le 2$ if and only if $\phi\in\Delta^{*p}$ and $\phi\in\Delta_2$. By Proposition \ref{prop:delta2}, the latter condition is equivalent to $\beta_\phi<\infty$, while the previous one is equivalent to $\alpha_\phi>1$.
Moreover $L_\phi$ is reflexive if and only if $\phi$ and $\phi^*$ satisfy condition $\Delta_2$ \cite{BS, KR, Lux}, so (4) is equivalent to (5).

\end{proof}

\begin{corollary} Let $\phi$ be a convex function. All conditions below are equivalent.
\begin{itemize}
\item[$(1)$]  $L_\phi$ has a finite cotype.
\item[$(2)$]  $L_\phi$ does not contain ${l_\infty^n}'s$ uniformly.
\item[$(3)$]  $\beta_\phi< \infty$, that is $\phi$ satisfies condition
$\Delta_2$.
\end{itemize}
\end{corollary}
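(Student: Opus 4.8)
The plan is to split the three-way equivalence into a general Banach-space part, namely (1)$\Leftrightarrow$(2), and an Orlicz-specific part, namely (1)$\Leftrightarrow$(3). Since $\phi$ is convex, Theorem \ref{th:quasinorm} guarantees that $L_\phi$ is a Banach lattice, so I am free to appeal to classical Banach-space theorems throughout, exactly as in the preceding corollary.

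For the equivalence (1)$\Leftrightarrow$(2) I would invoke the Maurey--Pisier theorem in the form recorded in \cite{DJT}: a Banach space has finite cotype if and only if it does not contain the spaces $\ell_\infty^n$ uniformly. This input is purely infinite-dimensional Banach-space geometry and requires no information about $\phi$; it is the exact analogue, for cotype, of the role played by Pisier's theorem for type in the previous corollary.

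The core of the argument is then (1)$\Leftrightarrow$(3), which I would deduce directly from Theorem \ref{th:cotype} together with Proposition \ref{prop:delta2}. For (3)$\Rightarrow$(1): if $\beta_\phi<\infty$, then by Proposition \ref{prop:delta2} the function $\phi$ satisfies condition $\Delta^q$ for some $0<q<\infty$, and by Remark \ref{rem:delta} I may replace $q$ by any larger exponent, in particular one with $q\ge 2$; Theorem \ref{th:cotype} then yields that $L_\phi$ has cotype $q$, hence a finite cotype. For (1)$\Rightarrow$(3): if $L_\phi$ has cotype $q$ for some finite $q$, which by definition forces $q\ge 2$, then Theorem \ref{th:cotype} gives $\phi\in\Delta^q$, whence $\beta_\phi\le q<\infty$ by Remark \ref{rem:delta}, and consequently $\phi\in\Delta_2$ again by Proposition \ref{prop:delta2}.

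I do not anticipate a genuine obstacle, since the statement essentially assembles results already established; the only point demanding attention is the standing convention that both cotype and Theorem \ref{th:cotype} are restricted to exponents $q\ge 2$. Thus when passing from $\Delta_2$ to cotype I must first enlarge the $\Delta^q$-exponent into the range $[2,\infty)$ via Remark \ref{rem:delta}, whereas in the reverse direction finite cotype already supplies an admissible exponent in that range, so no adjustment is needed there.
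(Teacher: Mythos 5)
Your proposal is correct and follows essentially the same route as the paper: the equivalence $(1)\Leftrightarrow(2)$ is the general Banach-space fact from \cite[Theorem 14.1]{DJT}, and $(1)\Leftrightarrow(3)$ is exactly the combination of Theorem~\ref{th:cotype} with Proposition~\ref{prop:delta2} (and Remark~\ref{rem:delta}) that the paper uses. Your explicit handling of the exponent adjustment into the range $q\ge 2$ is a point the paper leaves implicit, but it is the same argument.
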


\begin{proof} For any Banach space, condtions (1) and (2) are equivalent \cite[Theorem 14.1]{DJT}. Moreover by Theorem \ref{th:cotype}, $L_\phi$ has cotype $2\le q < \infty$ if and only if $\phi\in \Delta^q$.  In view of Proposition  \ref{prop:delta2}, the existence of such a $q$, in turn, is equivalent to $\beta_\phi< \infty$.

\end{proof}

\begin{corollary} Let $\phi$ be an Orlicz function with $\alpha_\phi > 0$.

 {\rm{\bf I}}. The Orlicz space $L_\phi$ has cotype
$\max (\beta_\phi + \varepsilon, 2)$ for every $\varepsilon > 0$.
If
$L_\phi$ has a finite cotype $q$ then $q\ge \max (\beta_\phi , 2)$.

 {\rm{\bf II}}. The Orlicz space $L_\phi$ has type $\min (\alpha_\phi -
\varepsilon , 2)$, for every  $\varepsilon > 0$ with $\alpha_\phi -
\varepsilon  > 0$,  whenever
$ \beta_\phi < \infty$.  If $L_\phi$ has a
nontrivial type $p>0$, then $\beta_\phi < \infty$ and $p\le \min
(\alpha_\phi, 2)$.
\end{corollary}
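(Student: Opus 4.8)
The plan is to reduce both parts to the growth conditions $\Delta^q$ and $\Delta^{*p}$ through the index identities $\beta_\phi=\inf\{q:\phi\in\Delta^q\}$ and $\alpha_\phi=\sup\{p:\phi\in\Delta^{*p}\}$ of Remark~\ref{rem:delta}, and then to quote the cotype and type criteria already established. No new estimate is needed: the entire content is matching each exponent to the range in which the relevant earlier theorem is valid, using the one-sided monotonicity of the $\Delta$-conditions recorded in Remark~\ref{rem:delta}.

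For Part~\textbf{I}, I would put $q=\max(\beta_\phi+\varepsilon,2)$. Then $q\ge2$ and, assuming $\beta_\phi<\infty$, also $q>\beta_\phi$; hence by the definition of $\beta_\phi$ there is $q'$ with $\beta_\phi\le q'<q$ and $\phi\in\Delta^{q'}$, so $\phi\in\Delta^{q}$ by Remark~\ref{rem:delta}. Since $q\ge2$, Theorem~\ref{th:cotype} gives cotype $q$ (the degenerate case $\beta_\phi=\infty$ being subsumed by Proposition~\ref{prop:linfty}, which exhibits a copy of $\ell_\infty$ and hence no finite cotype). Conversely, a finite cotype $q$ satisfies $q\ge2$ by the very definition of cotype, and Theorem~\ref{th:cotype} then forces $\phi\in\Delta^{q}$, so $q\ge\beta_\phi$; together these give $q\ge\max(\beta_\phi,2)$.

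For Part~\textbf{II}, I would set $p=\min(\alpha_\phi-\varepsilon,2)$, so that $0<p<\alpha_\phi$ and $p\le2$, whence $\phi\in\Delta^{*p}$ as above. The key is a split on the size of $p$: if $p\le1$, Corollary~\ref{cor:pnormability} yields type $p$ directly from $\phi\in\Delta^{*p}$; if $1<p\le2$, I invoke the hypothesis $\beta_\phi<\infty$, which gives $\phi\in\Delta_2$ by Proposition~\ref{prop:delta2}, and then Theorem~\ref{th:typecriterion} delivers type $p$. For the converse I would read a \emph{nontrivial} type as type $p$ for some $p>1$; then Proposition~\ref{prop:type2} gives $\phi\in\Delta_2$, hence $\beta_\phi<\infty$ by Proposition~\ref{prop:delta2}, while Proposition~\ref{prop:type1} gives $\phi\in\Delta^{*p}$, hence $p\le\alpha_\phi$, and since every type exponent is at most $2$ we obtain $p\le\min(\alpha_\phi,2)$.

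The only delicate points are organisational rather than analytic. First, each quoted theorem must be applied strictly inside its stated exponent range, which is exactly what forces the case distinction $p\le1$ versus $1<p\le2$ in Part~\textbf{II}: the former needs only $\Delta^{*p}$ (Corollary~\ref{cor:pnormability}), whereas the latter genuinely needs $\Delta_2$ (Theorem~\ref{th:typecriterion}). Second, the restriction $p>1$ in the converse of Part~\textbf{II} is essential and not cosmetic: an Orlicz function with $\alpha_\phi=1$ and $\beta_\phi=\infty$ produces a space of type $p$ for every $p<1$ yet violates $\beta_\phi<\infty$, so the conclusion would fail if arbitrarily small type exponents were admitted. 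Confirming this reading of \emph{nontrivial type} is the main thing to get right.
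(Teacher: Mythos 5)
Your proposal is correct and takes essentially the same route as the paper, whose entire proof reads: Part \textbf{I} follows from Remark~\ref{rem:delta} and Theorem~\ref{th:cotype}, and Part \textbf{II} from Remark~\ref{rem:delta} and Theorem~\ref{th:typecriterion}. The additional care you take --- the case split at $p\le 1$ handled via Corollary~\ref{cor:pnormability}, the degenerate case $\beta_\phi=\infty$ via Proposition~\ref{prop:linfty}, and the reading of nontrivial type as $p>1$ --- just makes explicit what the paper's two-line citation leaves implicit.
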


\begin{proof} Part {\bf I} follows from Remark \ref{rem:delta} and Theorem \ref{th:cotype}.
Part {\bf II} is a result of Remark \ref{rem:delta} and Theorem \ref{th:typecriterion}.

\end{proof}

\begin{examples}
\par (1) For any $p\in (0,2]$ and $q\in [2,\infty)$, there exists an
Orlicz space of type $p$ and cotype $q$.

(a) Let $L_\phi$ be induced by $\phi(u) = \max (u^p , u^q)$, $0<p\le 2\le q<\infty$. Then $\phi$ is an Orlicz function not necessarily convex with $\alpha_\phi = p$ and $\beta_\phi = q$. If $1\le p\le 2$ then $\phi$ is convex and $L_\phi$ is a Banach space.

Moreover $L_\phi =L_p\cap L_q$ where $\|\cdot\|_\phi$ is a quasi-norm equivalent to $\max\{\|\cdot\|_p, \|\cdot\|_q\}$. Here $\|f\|_r = \left(\int |f|^r\right)^{1/r}$,  $r>0$. It is easy to check that $\phi\in\Delta_2$ and $\phi\in\Delta^{*p}$, and $\phi\in\Delta^q$. So by Theorem \ref{th:typecriterion} it has type $p=\alpha_\phi$, and by Theorem \ref{th:cotype} it has cotype $q=\beta_\phi$.

(b) Let $L_\phi = L_p +
L_q $, where $\phi(u)= \min (u^p,
u^q)$, $0<p\le 2\le q<\infty$. Then $\alpha_\phi = p$ and $\beta_\phi = q$, and $\|f\|_\phi$ is equivalent to $ \inf\{\|g\|_p + \|h\|_q: f= g+h,\ g\in L_p, \ h\in L_q\}$. 
Again as above $\phi\in\Delta_2$ and $\phi\in\Delta^{*p}$, and $\phi\in\Delta^q$. Thus $L_\phi$ has type $p=\alpha_\phi$ and cotype $q=\beta_\phi$.

The function $\phi(u) = \min(u^p,u^q)$ is never convex. However if $1\le p \le 2$ then it is equivalent to a convex Orlicz function.
In fact $\phi$ is equivalent to 
\[
\Phi(u) =\int_0^u \min (t^{p-1},
t^{q-1})dt = \int_0^u \frac{\phi(t)}{t} dt.
\] 
Obviously $\frac{\phi(u)}{u}$ is increasing for $u>0$, so $\Phi$ is convex. Moreover, for every $u>0$,
\[
\phi\left(\frac{u}{2}\right) \le \Phi(u) \le \phi(u).
\]
In this case the quasinorm $\|\cdot\|_\phi$ is equivalent to the Luxemburg norm $\|\cdot\|_\Phi$, that is $L_\phi$ is normable.

If $F:\Bbb R \rightarrow \Bbb R$ is any function then
$F^*(v) = \sup_{u\in\Bbb R}\{uv - F(u)\}$, $v\in\mathbb{R}$,  is said to be the {\it
Legendre transform} $F$.
It is well known that $F = F^{**}$ if and only if $F$ is convex \cite[Theorem 1 on p. 175]{IT}. Since $F^{**}\le F$ for any $F$, $F^{**}$ is a
convex minorant of $F$. 

In our case $\phi^{**}\le \phi$ is a convex minorant of $\phi$. Moreover $\Phi^{**} = \Phi$, and thus 
\[
\phi^{**}\left(\frac{u}{2}\right) \le \Phi(u) \le \phi^{**}(u).
\]
Therefore $\phi$ is equivalent to its convex minorant.

\vspace{2 mm}

\par (2) An Orlicz space need not have its upper index as a cotype. Let
$q\ge 2$ and define
\[
\phi(u) =
\begin{cases}
0, &\text{for $u = 0$}\\
\frac{u^q}{|\ln u |}, &\text{for $0<u\le \frac1{e}$}\\
(\frac1q + 1)u^q - \frac1q e^{-q}, &\text{for
$u>\frac1{e}$.}
\end{cases}
\]
Then $\beta_\phi = q$, but $\sup_{\lambda \ge 1 ,
u>0}\frac{\phi(\lambda u)}{\lambda^q\phi(u)} \ge \sup_{\lambda \ge 1} \frac{|\ln (1/e) |}{|\ln (\lambda /e)|} 
 = \infty$, which implies that $\phi$ does not satisfy condition $\Delta^q$. Thus
$L_\phi$ has cotype $q + \varepsilon$ for every $\varepsilon > 0$,
but does not have cotype $q$.
\end{examples}
  
\vspace{5mm}

\end{document}